\newtheorem{thm}{Theorem}[section]
\newtheorem{lem}[thm]{Lemma}
\newtheorem{cor}[thm]{Corollary}
\newtheorem{prop}[thm]{Proposition}
\newtheorem{conj}[thm]{Conjecture}
\theoremstyle{definition}
\newtheorem{defn}[thm]{Definition}
\newtheorem{defns}[thm]{Definitions}
\newtheorem{prob}[thm]{Problem}
\newtheorem{notation}[thm]{Notation}
\newtheorem{ex}[thm]{Example}
\theoremstyle{remark}
\newtheorem{rem}[thm]{Remark}
\numberwithin{equation}{section}
\newcommand{\thmref}[1]{Theorem~\ref{#1}}
\newcommand{\corref}[1]{Corollary~\ref{#1}}
\newcommand{\secref}[1]{\S\ref{#1}}
\newcommand{\conjref}[1]{Conjecture~\ref{#1}}
\newcommand{\probref}[1]{Problem~\ref{#1}}
\newcommand{\propref}[1]{Proposition~\ref{#1}}
\newcommand{\lemref}[1]{Lemma~\ref{#1}}
\newcommand{\exref}[1]{Example~\ref{#1}}
\newcommand{\remref}[1]{Remark~\ref{#1}}
\newcommand{\Ext}{\operatorname{Ext}}
\newcommand{\Ind}{\operatorname{Ind}}
\newcommand{\im}{\operatorname{im}}
\newcommand{\Rad}{\operatorname{Rad}}
\newcommand{\A}{{\mathcal  A}}
\newcommand{\U}{{\mathcal  U}}
\newcommand{\K}{{\mathcal  K}}
\newcommand{\HH}{{\mathcal  H}}
\newcommand{\FF}{{\mathcal  F}}
\newcommand{\Z}{{\mathbb  Z}}
\newcommand{\F}{{\mathbb  F}}
\newcommand{\sm}{\wedge}
\newcommand{\ra}{\rightarrow}
\newcommand{\xra}{\xrightarrow}
\newcommand{\hra}{\hookrightarrow}
\newcommand{\era}{\twoheadrightarrow}
\begin{document}

\title[group cohomology nilpotence]{Nilpotence in group cohomology}

 \author[Kuhn]{Nicholas J.~Kuhn}
 \address{Department of Mathematics \\ University of Virginia \\ Charlottesville, VA 22904}
 \email{njk4x@virginia.edu}
\thanks{This research was partially supported by N.S.F. grants 0604206 and 0967649.}

 \date{February 21, 2010.}

 \subjclass[2000]{Primary 20J06; Secondary 55R40}

 \begin{abstract}

 We study bounds on nilpotence in $H^*(BG)$, the mod $p$ cohomology of the classifying space of a compact Lie group $G$. Part of this is a report of our previous work on this problem, updated to reflect the consequences of Peter Symonds recent verification of Dave Benson's Regularity Conjecture.  New results are given for finite $p$--groups, leading to good bounds on nilpotence in $H^*(BP)$ determined by the subgroup structure of the $p$--group $P$.

\end{abstract}

\maketitle

\section{Introduction}

Fixing a prime $p$, let $H^*(BG)$ denote the mod $p$ cohomology ring of the classifying space of a compact Lie group $G$.  This is a graded commutative $\F_p$--algebra of great interest as it is the home for mod $p$ characteristic classes of principal $G$ bundles.  Furthermore, when $G$ is finite, this ring identifies with $\Ext^*_{\F_p[G]}(\F_p, \F_p)$, and so contains much detailed module theoretic information.

Precise calculation of $H^*(BG)$ can be daunting, particularly when $G$ is a finite $p$--group.  In this paper we study nilpotence in $H^*(BG)$.  We offer some updates of our previous work in \cite{k5}, together with new results in the finite $p$--group case.

We should be more precise about what we mean by `nilpotence'.

Let $\Rad(G)$ be the nilradical of the graded $\F_p$--algebra $H^*(BG)$. One can define an `algebraic' nilpotence degree as follows.

\begin{defn}  Define $d^{alg}(G)$ to be the maximal $d$ such that $\Rad(G)^{d} \neq 0$.
\end{defn}

As the mod $p$ cohomology of a topological space, $H^*(BG)$ is in the category $\U$, the category of modules over the mod $p$ Steenrod algebra $\A_p$ which satisfy the unstable condition. Following Hans-Werner Henn, Jean Lannes, and Lionel Schwartz in \cite{hls1}, one can define a `topological' nilpotence degree as follows.  Let $\Sigma^d M$ denote the $d$th suspension (upward shift) of a graded module $M$.

\begin{defn}  Define $d^{\U}(G)$ to be the maximal $d$ such that $H^*(BG)$ contains a nonzero submodule of the form $\Sigma^d M$, with $M \in \U$.
\end{defn}

This definition is clearly just dependent on the $\A_p$ module structure of $H^*(BG)$, but results in \cite{hls1} allows for comparison with $d^{alg}(G)$.  As will be reviewed in \secref{old results revisited},
\begin{equation*}
d^{alg}(G) \leq
\begin{cases}
d^{\U}(G) & \text{if } p=2 \\ d^{\U}(G) + r(G) & \text{if } p \text{ is odd}.
\end{cases}
\end{equation*}
Here $r(G)$ is the maximal rank of an elementary abelian $p$--subgroup of $G$.

Our goal here is to describe how to calculate $d^{\U}(G)$, and, in particular, to give good group theoretic upper bounds.  We note that $d^{\U}(\Z/p) = 0$ and $d^{\U}(G \times H) = d^{\U}(G) + d^{\U}(H)$.  Furthermore, by transfer arguments, $d^{\U}(G) \leq d^{\U}(P)$, if $P$ is a $p$--Sylow subgroup of a finite group $G$, and a similar inequality holds for a general compact Lie group $G$, with $P$ now the evident extension of a maximal torus $T$ by a $p$--Sylow subgroup of $N_G(T)/T$.

\subsection{A general bound on $d^{\U}(G)$.}

\begin{notation} Throughout the paper, we let $E$ denote an elementary abelian $p$--group, i.e.~ a group isomorphic to $(\Z/p)^r$ for some $r$. We let $E^{\#}$ denote the dual of $E$. As mentioned above, $r(G)$ will denote the maximal rank of $E < G$.  Let $C(G) < G$ be the maximal central elementary abelian $p$--subgroup, and let $c(G)$ denote its rank.
\end{notation}

We recall from \cite{k5} the definition of a key invariant.

\begin{defn} Via restriction, $H^*(BC(G))$ is a finitely generated $H^*(BG)$--module, and we let $e(G)$ denote the top degree of a generator.
\end{defn}

\begin{thm} \label{d(G) thm}  If $G$ is compact Lie, then
$$ \max_{\substack{E < G \\ r(E) = r(G)}} \{ e(C_G(E))- \dim(C_G(E)) \} \leq d^{\U}(G) \leq \max_{E<G} \{ e(C_G(E))- \dim(C_G(E))\}.$$
\end{thm}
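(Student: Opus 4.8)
The plan is to use the theory of Henn–Lannes–Schwartz (HLS) relating the structure of an unstable module $M$ to the functors $T_E M = T_{E^\#}M$ and, more precisely, to the "components at $E$" controlled by the category of modules over $H^*(BE)$. The key input is the HLS description of $d^{\U}$ in terms of the invariant theory of the maximal-rank elementary abelians: for $M \in \U$, the top nonzero degree of the "finite part" after applying $T_E$ and localizing is governed by $\Map(BE, X)$-type decompositions. For $M = H^*(BG)$, Lannes' theorem identifies $T_E H^*(BG)$ with $\prod_{[\rho]} H^*(B C_G(\rho(E)))$, the product over conjugacy classes of homomorphisms $\rho\colon E \to G$, of the cohomology of the classifying spaces of the centralizers.

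\medskip

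First I would reduce $d^{\U}(G)$ to a statement about each centralizer $C_G(E)$ separately. Applying $T_E$ to $H^*(BG)$ and using Lannes' calculation, a nonzero suspension submodule $\Sigma^d M \hookrightarrow H^*(BG)$ of maximal $d$ must "survive" to the component of $T_E H^*(BG)$ indexed by a monomorphism $E \hookrightarrow G$ of maximal rank $r(E) = r(G)$ — here one uses that $d^{\U}$ is detected after applying $T_E$ for $E$ of top rank, which is where the unstable module genuinely fails to be locally finite. So the upper bound will follow once I show, for every $E < G$, that the contribution of the $C_G(E)$-component to $d^{\U}$ is at most $e(C_G(E)) - \dim(C_G(E))$; and the lower bound will follow by exhibiting, for a top-rank $E$, an actual suspension submodule of $H^*(BG)$ in degree $e(C_G(E)) - \dim(C_G(E))$.

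\medskip

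For the centralizer estimate, write $K = C_G(E)$, a compact Lie group containing $E$ as a central elementary abelian subgroup (so $E \leq C(K)$, $r(E) \leq c(K)$). The module $H^*(BK)$ is, via restriction along $C(K) < K$, a finitely generated module over $H^*(BK)$ — trivially — but the relevant point is that it is a finitely generated module over the invariant-theory ring $H^*(BC(K))$ with top generator degree $e(K)$; equivalently $H^*(BK)$ is a sum of suspensions $\Sigma^{d_i} (\text{free } H^*(BC(K))\text{-ish pieces})$, with $d_i \leq e(K)$. Now $H^*(BC(K)) = \F_p[x_1,\dots,x_c] \otimes \Lambda(\dots)$ sits in $\U$ and, crucially, the largest $d$ with $\Sigma^d M \hookrightarrow H^*(BC(K))$, $M \in \U$, $M \neq 0$, is exactly $-c$ in the relevant "shifted" sense — more precisely the divided-power / polynomial duality shows the top "connectivity deficit" of a free $H^*(BE)$-module of rank one is the dimension of $BE$, namely $\dim(E)=0$ for finite $E$, but when we further take $K$ into account the dimension that enters is $\dim(K)$. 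Combining: a suspension submodule of $H^*(BK)$ living in the $C(K)$-relevant part has suspension degree at most $e(K) - \dim(K)$. Passing back through $T_E$ (which shifts nothing but reorganizes components) and through the adjunction $\Map(BE, BG)$, this bounds the contribution of the $E$-component of $H^*(BG)$ by $e(C_G(E)) - \dim(C_G(E))$, giving the upper bound after taking the max over all $E$. For the lower bound, run the same analysis at a top-rank $E$: the top-degree generator of $H^*(BC(C_G(E)))$ as an $H^*(BC_G(E))$-module, in degree $e(C_G(E))$, together with the standard fact that it generates a free, hence "highly suspended," unstable submodule after accounting for the $\dim(C_G(E))$-dimensional fundamental-class shift, produces an honest $\Sigma^{e(C_G(E)) - \dim(C_G(E))} M \hookrightarrow H^*(BG)$.

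\medskip

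**The main obstacle** I anticipate is the bookkeeping in the HLS machinery that converts "finitely generated $H^*(BC(K))$-module with top generator in degree $e(K)$" into "no suspension submodule above degree $e(K) - \dim(K)$." This requires the precise HLS result (from \cite{hls1}) computing $d^{\U}$ of a free or induced unstable module over $H^*(BE)$, and then a filtration argument: one filters $H^*(BK)$ by sub-$H^*(BC(K))$-modules whose subquotients are (suspensions of) such induced modules, and must check that the suspension-degree estimate is additive/monotone along the filtration and that no cancellation pushes $d^{\U}$ higher. The interaction of the polynomial part (which contributes the $+e(K)$) with the $\dim(K)$-dimensional "orientation" shift coming from the difference between $H^*(BK)$ and the cohomology of the homotopy fiber of $BK \to BC(K)$ is exactly the delicate point, and getting the sign and the exact value of $\dim(C_G(E))$ right — rather than $\dim(C(G))$ or $r(G)$ — is where the argument has to be done carefully.
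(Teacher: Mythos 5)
Your reduction to centralizers via Lannes' $T$-functor is in the right spirit (the paper's reduction, Proposition~\ref{d(G) cess prop}, goes through the maps $q_d$ and central essential cohomology $Cess^*(BC_G(E))$, but these are the same circle of ideas), and you correctly identify the crux: converting ``$H^*(BK)$ is a finitely generated $H^*(BC(K))$--module with top generator in degree $e(K)$'' into the suspension bound $e(K)-\dim(K)$. But your proposal does not actually prove this step, and the mechanism you gesture at for the $-\dim(K)$ shift is wrong. You attribute it to a fundamental-class/orientation shift ``coming from the difference between $H^*(BK)$ and the cohomology of the homotopy fiber of $BK \ra BC(K)$.'' That is not where $\dim(K)$ enters. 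The actual source is Symonds' theorem (formerly Benson's Regularity Conjecture): $H^{s,t}_{\mathfrak m}(H^*(BK)) = 0$ for $s+t > -\dim K$. This is a nontrivial vanishing theorem in local cohomology, and your proposal never invokes it.

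Concretely, the paper's chain is: $d_0(Cess^*(BG)) = e_{prim}(G) \leq e_{indec}(G)$ (primitives inject into $A$--indecomposables for a Duflot algebra $A$, Proposition~\ref{Cess prop}(b)), and then $e_{indec}(G) = e(G) + \max\{e \mid H^{c(G),-c(G)+e}_{\mathfrak m}(H^*(BG)) \neq 0\}$ (Theorem~\ref{e indec loc coh thm}), so that Symonds' vanishing yields $e_{indec}(G) \leq e(G) - \dim G$. Your ``filtration argument'' and appeal to ``divided-power / polynomial duality'' do not replace this; without a Castelnuovo--Mumford regularity input, nothing forces the top indecomposable (or top suspension) below $e(K)$ by exactly $\dim K$, and for $G$ not compact Lie (e.g.\ arbitrary Noetherian unstable algebras) the analogous bound fails. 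A secondary gap: you also do not address why $Cess^*(BG)$, or your ``$E$-component,'' is finitely generated over the Duflot algebra in the first place (Proposition~\ref{fin dim prim prop} in the paper, which rests on a delicate argument from \cite{k5}). Finally, for the lower bound the key point you should make explicit is that when $r(E)=r(G)$ the centralizer $C_G(E)$ is $p$--central, and in the $p$--central case the inequality chain collapses to equalities (Theorem~\ref{indec < e thm}); your sketch alludes to ``exhibiting an honest suspension'' but does not say why $e(C_G(E)) - \dim(C_G(E))$ is actually attained rather than just an upper bound.
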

Here $\dim(G)$ denotes the dimension of a Lie group $G$ as a manifold, and so is 0 if $G$ is finite.  

In the theorem, the indexing for the upper bound can be restricted to $E$ which contain $C(G)$.  Thus the lower bound equals the upper bound when $c(G) = r(G)$, i.e.~ $G$ is {\em $p$--central} -- a group in which every element of order $p$ is central -- and, in that case, $d_0^{\U}(G) =  e(G) - \dim G$.

The proof of \thmref{d(G) thm} is given in \secref{old results revisited}. Most of this is a review and slight reorganization of work in \cite{k5}, with results extended to all compact Lie groups.  Some of our results were previously conditional on the verification of Dave Benson's Regularity Conjecture \cite{b} which conjectured the vanishing of certain local cohomology groups.  Happily, this is now a theorem of Peter Symonds \cite{symonds}, and we make very precise how the vanishing of local cohomology groups allows for improvement on \thmref{d(G) thm}.

\subsection{Bounds for finite $p$--groups.}  Further investigations of $e(P)$ when $P$ is a finite $p$--group lead to some good bounds on cohomology nilpotence determined by subgroup structure.

The following monotonicity theorem at first surprised us, as it is false for arbitrary finite groups.

\begin{thm} \label{monotone thm}  Let $Q$ be a subgroup of a $p$--group $P$.  Then $e(Q) \leq e(P)$.
\end{thm}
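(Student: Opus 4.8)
The plan is to compare the two invariants $e(Q)$ and $e(P)$ through their definitions in terms of the modules $H^*(BC(Q))$ and $H^*(BC(P))$ over $H^*(BQ)$ and $H^*(BP)$ respectively, using the restriction map $H^*(BP) \to H^*(BQ)$ and a transfer (or Frobenius reciprocity) argument to move information back up to $P$. The essential point to exploit is that $Q$ is a subgroup of a \emph{$p$-group}, so the restriction map $H^*(BP) \to H^*(BQ)$ has good properties: the transfer composed with restriction is multiplication by $[P:Q]$, which is a power of $p$ and hence zero on mod $p$ cohomology, but more usefully one has the theorem of Evens and others that $H^*(BQ)$ is generated as an $H^*(BP)$-module in degrees bounded by... — rather, the clean input I would use is that for a $p$-group, every subgroup $Q \le P$ fits in a chain $Q = Q_0 \lhd Q_1 \lhd \cdots \lhd Q_n = P'$ up to a subgroup $P'$ of $P$ with $Q$ normal in $P'$ and $[P':Q]=p$; so by induction it suffices to treat the case where $Q \lhd P$ has index $p$.

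First I would reduce, by the chain-of-subgroups remark above, to the case $Q \lhd P$ with $[P:Q] = p$. Next I would relate $C(Q)$ and $C(P)$: since $C(P)$ is central in $P$ it is in particular central in $Q$, so $C(P) \le C(Q)$, giving a surjection (or at least a map) of $H^*(B-)$-modules that I can track degree-wise. Then I would use the fact that $H^*(BC(Q))$, as a finitely generated module over $H^*(BQ)$, becomes a finitely generated module over $H^*(BP)$ after restriction along $H^*(BP) \to H^*(BQ)$, with generators in the same degrees; the issue is only to promote an $H^*(BQ)$-module generator of top degree $e(Q)$ to an $H^*(BP)$-module statement about $H^*(BC(P))$. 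Here I would invoke the Eckmann–Shapiro / transfer identity: $H^*(BQ)$ as an $H^*(BP)$-module is a summand-free induced-type object, and the evaluation/multiplication maps governing $e(\cdot)$ are compatible with restriction, so a nonzero element of $H^*(BC(Q))$ not in $H^*(BQ)^{>0} \cdot H^*(BC(Q))$ in degree $e(Q)$ produces, via the transfer $\operatorname{tr}: H^*(BQ) \to H^*(BP)$ being surjective onto a subring over which things stay finitely generated with controlled generator degrees, a generator of $H^*(BC(P))$ of degree $\ge e(Q)$ — whence $e(P) \ge e(Q)$.

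The main obstacle I expect is precisely the last step: transfer maps are not ring maps, so one cannot naively push a "minimal generator in top degree" statement from $Q$ up to $P$. The fix will be to phrase $e(\cdot)$ cohomologically — say, as the top nonvanishing degree of $H^*(BC(G)) \otimes_{H^*(BG)} \F_p$, or equivalently a local-cohomology / $\operatorname{Tor}$ invariant — and then use that for $Q \lhd P$ of index $p$ the extension of rings $H^*(BP) \to H^*(BQ)$ is finite and the spectral sequence of the extension $1 \to Q \to P \to \Z/p \to 1$ degenerates enough to compare generator degrees. One should also keep an eye on the contrast flagged in the text: the statement is false for general finite $G$, so the argument \emph{must} use $p$-locality somewhere essential — concretely in the claim $C(P) \le C(Q)$ together with the index-$p$ normal chain, both of which fail for arbitrary finite groups. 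Once the index-$p$ normal case is in hand, the induction along the chain closes the proof.
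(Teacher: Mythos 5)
Your opening reduction is the right one and matches the paper: induct on the index so that $Q \lhd P$ with $[P:Q]=p$, and relate the centers (one must also split off the case $C(P) \not\le Q$, where a central $\sigma$ of order $p$ outside $Q$ forces $P = \langle \sigma\rangle \times Q$ and hence $e(P)=e(Q)$; when $C(P)\le Q$ one gets $C(P) = C(Q)^{\Z/p}$ for the $\Z/p = P/Q$ action). But from that point on your route is not viable, and you have in fact named the fatal obstacle yourself without fixing it. Transfer cannot be used here: $\mathrm{tr}\circ\mathrm{res}$ is multiplication by $[P:Q]=p=0$ in mod $p$ cohomology, transfer is not a ring map, and there is no Eckmann--Shapiro mechanism that promotes a top-degree $H^*(BQ)$-module generator of $H^*(BC(Q))$ to a generator of $H^*(BC(P))$ over $H^*(BP)$. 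The proposed repair (rephrase $e$ via $\mathrm{Tor}$ or local cohomology and appeal to degeneration of the spectral sequence of $1\to Q\to P\to\Z/p\to 1$) is not an argument; no such degeneration is available, and the statement being proved is exactly a lower bound on how much the image of restriction can shrink, which is not controlled by those tools.

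What the paper actually does at this point is entirely different and is the real content of the theorem. The restriction $H^*(BP)\ra H^*(BC(P))$ factors as
$$H^*(BP) \ra H^*(BQ)^{\Z/p} \ra H^*(BC(Q))^{\Z/p} \hra H^*(BC(Q)) \era H^*(BC(Q)^{\Z/p}),$$
and the image of $H^*(BQ)$ in the polynomial part $S^*(V)$ of $H^*(BC(Q))$ (with $V=\beta(H^1(BC(Q)))$) is a full sub-Hopf algebra $\HH(\FF)$ attached to a $\Z/p$-stable filtration $\FF$ of $V$, with $e(Q)\le e(\FF)$. So everything reduces to \probref{invariant problem}: bound the image of $\HH(\FF)^{\Z/p}$ under $S^*(V)\era S^*(V_{\Z/p})$ by a Hopf algebra $\HH(\FF_{\Z/p})$ with $e(\FF)\le e(\FF_{\Z/p})$. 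This is genuine invariant theory (\thmref{filtered inv thm}), requiring the structure of filtered $\F_p[\Z/p]$-modules (\lemref{decomp lemma}, and the failure of filtered splittings in \exref{funny example}), the Campbell--Hughes computation of the vector invariants of $mV_2$, and a delicate counting argument trading the drop in rank from $C(Q)$ to $C(P)$ against the growth of the exponents $p^{j_i}$ by a factor of $p$. None of this appears in your proposal, so the heart of the proof is missing.
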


An immediate first consequence is that the upper bound given in \thmref{d(G) thm} simplifies.

\begin{thm} \label{d(P) thm} If $P$ is a $p$--group, then $d^{\U}(P) \leq e(P)$.
\end{thm}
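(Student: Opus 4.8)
The plan is to derive \thmref{d(P) thm} directly from \thmref{d(G) thm} and \thmref{monotone thm}. By \thmref{d(G) thm} applied to the finite group $G = P$ (so $\dim = 0$ throughout), we have the upper bound
$$ d^{\U}(P) \leq \max_{E < P} \{ e(C_P(E)) \}. $$
So it suffices to show that $e(C_P(E)) \leq e(P)$ for every elementary abelian $p$--subgroup $E < P$. But $C_P(E)$ is a subgroup of the $p$--group $P$, so this is exactly the content of \thmref{monotone thm}. Taking the maximum over all such $E$ then yields $d^{\U}(P) \leq e(P)$.

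The first step is thus to record that for a finite $p$--group all the $\dim$ terms vanish, so the upper bound in \thmref{d(G) thm} reads $\max_{E < P}\{e(C_P(E))\}$. The second step is to observe that each centralizer $C_P(E)$ is a subgroup of $P$, hence \thmref{monotone thm} applies and gives $e(C_P(E)) \leq e(P)$. The third and final step is to combine these: the maximum of a collection of quantities each bounded by $e(P)$ is itself bounded by $e(P)$, so $d^{\U}(P) \leq e(P)$.

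I expect there to be no real obstacle here: all the substance has been pushed into \thmref{monotone thm}, whose proof presumably requires genuine work with the invariant $e$ and the subgroup structure of $p$--groups. The only mild subtlety worth a sentence is to confirm that \thmref{d(G) thm} is being applied to $G = P$ itself (not to some overgroup), so that the centralizers appearing are taken inside $P$; then \thmref{monotone thm} is literally applicable with $Q = C_P(E)$.
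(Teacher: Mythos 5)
Your proposal is correct and is exactly the paper's argument: specialize the upper bound of \thmref{d(G) thm} to the finite group $G=P$ (so all $\dim$ terms vanish) and then apply \thmref{monotone thm} to each subgroup $C_P(E)<P$. Nothing further is needed.
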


We then make further use of \thmref{monotone thm}.  The theorem, when combined with an explicit calculation of the $e$--invariant of the $p$--Sylow subgroups of the symmetric groups, leads to the next estimate of $e(P)$.

\begin{thm} \label{perm thm}  Suppose a $p$--group $P$ acts faithfully on a set $S$ with no fixed points. Then
\begin{equation*}
e(P) \leq
\begin{cases}
|S|/2 - |S/P| & \text{if } p = 2 \\ 2|S|/p - |S/P| & \text{if } p \text{ is odd}.
\end{cases}
\end{equation*}
\end{thm}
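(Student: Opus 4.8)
The plan is to combine \thmref{monotone thm} with an embedding of $P$ into a product of iterated wreath products, reducing the statement to a computation of the $e$--invariant of the $p$--Sylow subgroups of the symmetric groups. Decompose $S$ into its $P$--orbits, $S = S_1 \sqcup \cdots \sqcup S_m$ with $m = |S/P|$. Since $P$ acts without fixed points, each orbit has $p$--power size $|S_i| = p^{b_i}$ with $b_i \geq 1$, and the action furnishes an embedding $P \hookrightarrow \prod_{i=1}^m \operatorname{Sym}(S_i)$ whose $i$th component is a transitive $p$--subgroup of $\operatorname{Sym}(S_i) \cong \Sigma_{p^{b_i}}$. Any $p$--subgroup of the symmetric group $\Sigma_{p^b}$ is conjugate into the iterated wreath product $W_b := \Z/p \wr \cdots \wr \Z/p$ ($b$ copies), which is a $p$--Sylow subgroup of $\Sigma_{p^b}$ because $|W_b| = p^{(p^b-1)/(p-1)}$ is the full power of $p$ dividing $(p^b)!$. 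Conjugating each factor separately, $P$ is thus isomorphic to a subgroup of the $p$--group $W := \prod_{i=1}^m W_{b_i}$. By \thmref{monotone thm}, $e(P) \leq e(W)$; and $e$ is additive over finite products, which is immediate from the Künneth isomorphism $H^*(B(G \times H)) \cong H^*(BG) \otimes H^*(BH)$ together with $C(G\times H) = C(G) \times C(H)$. Hence $e(P) \leq \sum_{i=1}^m e(W_{b_i})$.

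So it suffices to prove $e(W_b) \leq p^b/2 - 1$ for $p = 2$ and $e(W_b) \leq 2p^{b-1} - 1$ for $p$ odd: given these, $e(P) \le \sum_i (p^{b_i}/2 - 1) = |S|/2 - |S/P|$ in the first case, and $e(P) \le \sum_i (2p^{b_i-1} - 1) = 2|S|/p - |S/P|$ in the second. To estimate $e(W_b)$ I would induct on $b$ (with $e(W_0) = e(W_1) = 0$), the inductive step being a recursive bound for $e(G \wr \Z/p)$. The structural input is that the maximal central elementary abelian subgroup of $G \wr \Z/p = G^p \rtimes \Z/p$ is the diagonal copy $\Delta$ of $C(G)$, so that $e(G\wr\Z/p)$ is the top degree of a module generator of $H^*(B\Delta) \cong H^*(BC(G))$ over $H^*(B(G\wr\Z/p))$. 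Using the Nakaoka--Evens description of the latter ring --- generated over the image of $H^*(BG^p)$ by the Evens norm classes together with the classes pulled back from $B\Z/p$ --- and restricting along $B\Delta \hookrightarrow BG^p$, the Evens norm of a top--degree module generator of $H^*(BG)$ restricts, up to lower filtration, to the $p$th power of a top--degree generator of $H^*(BC(G))$; tracking the pulled--back low--degree classes (degree $1$ when $p = 2$; degrees $1$ and $2$ when $p$ is odd) then yields a recursion of the form $e(G\wr\Z/p) \le p\,e(G) + c_p$ with $c_2 = 1$ and $c_p = 2(p-1)$, which unwinds to the bounds above.

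The main obstacle is precisely this wreath--product estimate. One must control the generators of $H^*(B\Delta)$ not merely as a graded vector space but compatibly with the entire $H^*(B(G\wr\Z/p))$--action --- that is, understand exactly how the norm and transfer classes behave upon restriction to the diagonal central subgroup --- and the prime $2$ and odd primes must be handled separately, the discrepancy between $c_2 = 1$ and $c_p = 2(p-1)$ being already visible in $e(\Z/2 \wr \Z/2) = e(D_8) = 1$ versus the corresponding odd--primary value. This is the ``explicit calculation of the $e$--invariant of the $p$--Sylow subgroups of the symmetric groups'' promised in the introduction; once it is in hand, the reduction from a general faithful fixed--point--free action is the soft orbit--decomposition argument of the first paragraph.
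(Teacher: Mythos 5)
Your reduction is exactly the paper's: decompose $S$ into $P$--orbits, embed $P$ into a product of iterated wreath products $W(r_i)$ (the Sylow subgroups of $\Sigma_{p^{r_i}}$), and combine \thmref{monotone thm} with the additivity of $e$ over direct products to reduce everything to bounding $e(W(r))$. The gap is in your proposed computation of $e(W(r))$ at odd primes. The recursion $e(G\wr\Z/p)\le p\,e(G)+c_p$ with $c_p=2(p-1)$ is already false for $G=\Z/p$: the image of restriction $H^*(B(\Z/p\wr\Z/p))\to H^*(B\Z/p)=\Lambda(x)\otimes\F_p[y]$ is $\F_p[y^p]$, so the top module generator is $xy^{p-1}$ in degree $2p-1$, i.e.\ $e(\Z/p\wr\Z/p)=2p-1>2(p-1)$. (Your unwinding also gives $2p^{b-1}-2$, not the needed $2p^{b-1}-1$.) Worse, no single constant can rescue a linear recursion in $e$: the true values $e(W(r))=2p^{r-1}-1$ for $r\ge 2$ satisfy $e(W(r+1))=p\,e(W(r))+(p-1)$, while the first step forces $c=2p-1$, and using $c=2p-1$ throughout overshoots the target (for $b=3$ it gives $2p^2+p-1>2p^2-1$). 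The conceptual reason is that the degree--$1$ contribution of the exterior class $x\in H^1(BC)$ to $e$ is incurred exactly once, not once per wreath stage, so it cannot be carried through a recursion in the single number $e$.

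The repair, which is the paper's argument (\propref{e(W) prop}), is to induct not on $e(W(r))$ but on the image of restriction $\im(i(r)^*)\subseteq H^*(BC(r))$. Being a sub-Hopf algebra of $H^*(B\Z/p)$, this image is forced to be $\F_p[y^{p^{j}}]$ for some $j$ when $p$ is odd (resp.\ $\F_2[x^{2^{j}}]$ when $p=2$), so one need only track $j$. Using the factorization of $i(r+1)^*$ through the surjection $H^*(BW(r+1))\twoheadrightarrow H^*(BW(r)^p)^{\Z/p}$ followed by restriction to the diagonal, the inductive step is the elementary invariant-theoretic computation that $\F_p[y_1^{p^{r-1}},\dots,y_p^{p^{r-1}}]^{\Z/p}$ maps onto exactly $\F_p[y^{p^{r}}]$ under $y_i\mapsto y$ (the norm $y_1\cdots y_p$ hits $y^p$, and nothing smaller is hit), so $j$ increments by one at each stage; $e(W(r))=2p^{r-1}-1$ is then read off from the type $[2p^{r-1}]$ at the end, with the $+1$ from the exterior class appearing a single time. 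Your delicate analysis of Evens norm and transfer classes in $H^*(B(G\wr\Z/p))$ is thereby avoided entirely. The $p=2$ half of your recursion is numerically consistent, and the soft first paragraph of your argument is fine as written.
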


Here $|S|$ is the cardinality of $S$. \\

Another new general bound on $e(P)$ is the following.

\begin{thm} \label{subgroup thm}  Let $A < P$ be an abelian subgroup of maximal order in a $p$--group $P$.  Then $e(P) \leq c(P)(2|P|/|A| -1)$.
\end{thm}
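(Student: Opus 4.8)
Write $C=C(P)\cong(\Z/p)^{c}$ with $c=c(P)$, and set $p^{k}=[P:A]=|P|/|A|$. The plan is to locate, inside the image of the restriction map $\Res\colon H^{*}(BP)\to H^{*}(BC)$, a system of parameters for $H^{*}(BC)$ consisting of $c$ classes each of degree $2p^{k}$, and then to bound $e(P)$ by the sum of their degrees. Two preliminary observations set this up. First, since $A$ has maximal order among abelian subgroups it is in particular a maximal abelian subgroup, hence self-centralizing, so $Z(P)\le C_{P}(A)=A$ and therefore $C\le A$ (indeed $C\le\Omega_{1}(A)$); this inclusion is the only place the maximality of $|A|$ enters. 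Second, I would record the following elementary fact: if $\theta_{1},\dots,\theta_{c}\in\im(\Res)$ are homogeneous, form a system of parameters for $H^{*}(BC)$, and (for $p$ odd) lie in the polynomial subalgebra $\F_{p}[t_{1},\dots,t_{c}]\subset H^{*}(BC)$, then $e(P)\le\sum_{i}\deg\theta_{i}-c$. Indeed, $H^{*}(BC)=\F_{p}[t_{1},\dots,t_{c}]\otimes\Lambda(x_{1},\dots,x_{c})$ (the exterior factor being absent when $p=2$) is Cohen--Macaulay, hence a free module over the polynomial subalgebra $\F_{p}[\theta_{1},\dots,\theta_{c}]$; a short Poincar\'e series computation shows the top degree of the quotient $H^{*}(BC)/(\theta_{1},\dots,\theta_{c})$, and hence the largest degree of a module generator, equals $\sum_{i}\deg\theta_{i}-c$ (the ``$-c$'' being $-\sum\deg t_{i}+\sum\deg x_{i}$ for $p$ odd and $-\sum\deg t_{i}$ for $p=2$). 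Since $\im(\Res)\supseteq\F_{p}[\theta_{1},\dots,\theta_{c}]$, those generators also generate $H^{*}(BC)$ over $\im(\Res)$, and so $e(P)\le\sum_{i}\deg\theta_{i}-c$. Thus it suffices to produce such $\theta_{1},\dots,\theta_{c}$ of degree $2p^{k}$.

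To produce them I would use the Kaloujnine--Krasner embedding $P\hookrightarrow A\wr\Sigma_{p^{k}}$, for the action of $\Sigma_{p^{k}}$ on $[P:A]=p^{k}$ points. Under this embedding $C$ lands in the \emph{diagonal} copy of $A$ inside $A\wr\Sigma_{p^{k}}$: since $C\le A$, every $z\in C$ acts trivially on $P/A$ and so maps into the base subgroup $A^{p^{k}}$, and since $z$ is central all of its coordinates there are equal to $z$. Now for each of the $\rk(A)$ coordinate characters $\chi$ of the finite abelian group $A$, form the $p^{k}$-dimensional imprimitive representation $\chi\wr\Sigma_{p^{k}}$ of $A\wr\Sigma_{p^{k}}$ (the base acting factorwise by $\chi$, $\Sigma_{p^{k}}$ permuting the summands). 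Its restriction to the diagonal $A$ is $p^{k}$ copies of $\chi$, so its total Chern class restricts there to $(1+c_{1}(\chi))^{p^{k}}$; since $\binom{p^{k}}{j}\equiv0\pmod p$ for $0<j<p^{k}$, the top Chern class $c_{p^{k}}(\chi\wr\Sigma_{p^{k}})$, of degree $2p^{k}$, restricts to $c_{1}(\chi)^{p^{k}}$ on the diagonal $A$, hence to $\bigl(c_{1}(\chi)|_{C}\bigr)^{p^{k}}$ on $C$. Restricting through $P$, this gives, for each coordinate character $\chi$, an element of $\im(\Res)$ of degree $2p^{k}$ equal to $\bigl(c_{1}(\chi)|_{C}\bigr)^{p^{k}}$. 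As $\chi$ ranges over the coordinate characters of $A$, the classes $c_{1}(\chi)|_{C}$ range over a spanning set of the image of the Bockstein on $H^{1}(BC)$, i.e. of $\langle t_{1},\dots,t_{c}\rangle$ (resp. of $\langle t_{1}^{2},\dots,t_{c}^{2}\rangle$ when $p=2$) --- this is where $C\le A$ is essential, since the coordinate characters separate the points of $A$ and hence of $C$ --- so their $p^{k}$-th powers span the $c$-dimensional subspace $\langle t_{1}^{p^{k}},\dots,t_{c}^{p^{k}}\rangle$ (resp. $\langle t_{1}^{2^{k+1}},\dots,t_{c}^{2^{k+1}}\rangle$). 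Choosing $c$ of these top Chern classes whose restrictions to $C$ form a basis of that subspace yields $\theta_{1},\dots,\theta_{c}\in\im(\Res)$, each of degree $2p^{k}$, lying in the polynomial part, and generating an ideal containing $t_{1}^{p^{k}},\dots,t_{c}^{p^{k}}$ --- hence a system of parameters for $H^{*}(BC)$.

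Combining the two steps gives $e(P)\le\sum_{i=1}^{c}\deg\theta_{i}-c=c\cdot 2p^{k}-c=c(P)\bigl(2|P|/|A|-1\bigr)$, as claimed. The step I expect to require the most care is the verification in the second paragraph that the chosen $\theta_{i}$ genuinely form a system of parameters, which rests on the assertion that the restrictions $c_{1}(\chi)|_{C}$ fill out the full linear part of $H^{2}(BC)$; this in turn depends on $C$ actually lying in $A$, so the argument really does hinge on the self-centralizing property of a maximal-order abelian subgroup. A secondary nuisance is the bookkeeping at $p=2$, where the relevant classes are squares and the parameters have degree $2p^{k}=2^{k+1}$, although the final degree count $\sum\deg\theta_{i}-c$ comes out the same.
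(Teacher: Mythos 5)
Your proposal is correct and follows essentially the same route as the paper: both arguments rest on $C(P)\le A$, produce degree-$2|P|/|A|$ classes in $\im(i^*)$ as top Chern classes of $[P:A]$-dimensional monomial representations induced from linear characters of $A$ (your $\chi\wr\Sigma_{p^k}$ restricted to $P$ is exactly $\Ind_A^P\chi$), and conclude from the resulting polynomial subalgebra $\F_p[y_1^{|P|/|A|},\dots,y_c^{|P|/|A|}]\subseteq\im(i^*)$. The wreath-product packaging and the explicit Poincar\'e-series bookkeeping are cosmetic differences, not a different method.
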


\begin{ex} \label{SU3 ex}  Both of these last two theorems are nicely illustrated by the following example.  Let $P$ be a 2--Sylow subgroup of the finite group $SU(3,4)$.  $P$ is a 2--central group of order 64, of exponent 4, with $C(P)= [P,P] \simeq \Z/2 \times \Z/2$: see \cite[\S 6.3]{k4} for a useful description of this group.  Both theorems give us the estimate $e(P) \leq 14$, which, in fact, computation shows equals $e(P)$, and thus $d^{\U}(P)$.

To use \thmref{perm thm}, let $a,b \in P$ be elements of order 4 with $a^2 \neq b^2$. Then $P$ acts faithfully on $S = P/\langle a \rangle \coprod P/\langle b \rangle$ with no fixed points, so $e(P) \leq 32/2 - 2 = 14$.

To use \thmref{subgroup thm}, the centralizer of any element of order 4 is isomorphic to $\Z/4 \times \Z/4$, thus $e(P) \leq 2[2(64/16)-1] = 14$.
\end{ex}

\thmref{subgroup thm} is proved using Chern classes of representations, and would be a special case of the next conjecture, where we let $n(G)$ denote the minimal dimension (over $\mathbb C$) of a faithful complex representation of $G$.

\begin{conj} \label{chern conj} If $G$ is compact Lie, then $e(G) \leq 2n(G) - c(G)$.
\end{conj}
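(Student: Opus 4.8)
The plan is to use Chern classes of a faithful representation, in the spirit of the argument behind \thmref{subgroup thm}, and to reduce \conjref{chern conj} to a statement in modular invariant theory. Fix a faithful complex representation $\rho\colon G\to U(n)$ with $n=n(G)$; it determines Chern classes $c_1,\dots,c_n\in H^*(BG)$ with $|c_k|=2k$. Write $C=C(G)\cong(\Z/p)^c$, $c=c(G)$. As $\rho$ is faithful and $C$ is central, $\rho|_C$ is a sum $\chi_1\oplus\dots\oplus\chi_n$ of characters whose kernels intersect trivially in $C$; put $z_j=c_1(\chi_j)\in H^2(BC)$. By naturality of Chern classes the image of $c_k$ under restriction $H^*(BG)\to H^*(BC)$ is $\sigma_k(z_1,\dots,z_n)$, the $k$th elementary symmetric function, and the $z_j$ span the $c$-dimensional subspace $W\subseteq H^2(BC)$ of Bocksteins of $H^1(BC)$ (the squares of $H^1(BC)$ when $p=2$).

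I would next record a formal reduction. Recall $e(G)$ is the top degree of a module generator of $H^*(BC)$ over the image of restriction from $H^*(BG)$. Let $B\subseteq\F_p[W]$ be the restricted Chern subring, generated by the $\sigma_k(z_1,\dots,z_n)$ and, more generally, by the restrictions to $H^*(BC)$ of the Chern classes of all representations of $G$; it lies in the image of $H^*(BG)$. Since $H^*(BC)$ is free over $\F_p[W]$ on generators in degrees $0,\dots,c$ — because $H^*(BC)=\Lambda(H^1(BC))\otimes\F_p[W]$ for $p$ odd, and $\F_2[H^1(BC)]$ is free over its subring of squares for $p=2$ — it is enough to show that $\F_p[W]=\F_p[y_1,\dots,y_c]$ is finitely generated as a $B$-module with generators in degrees $\le 2(n-c)$; that yields $e(G)\le 2(n-c)+c=2n(G)-c(G)$. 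The passage to a general compact Lie group should be harmless, as $C(G)$ is still a finite elementary abelian group and $\rho$ still has dimension $n(G)$.

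The crux is thus the invariant-theoretic claim that $\F_p[y_1,\dots,y_c]$ is finitely generated over $B$ with generators in degrees $\le 2(n-c)$. Finite generation over the smaller ring $\F_p[\sigma_1(\vec z),\dots,\sigma_n(\vec z)]$ is automatic, since the common zero locus of the $\sigma_k(\vec z)$ over $\overline{\F}_p$ is the origin (the $z_j$ span $W$); but the degree bound genuinely needs the larger ring $B$. Indeed, when the weights $z_j$ are in general position — a basis, in the extreme case $n=c$ — the $\sigma_k(\vec z)$ generate the full ring of symmetric polynomials, over which the coinvariant algebra already attains degree $c(c-1)$, far above $2(n-c)$. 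This is where I expect the real difficulty to lie: one must either obtain a usable description of the restricted Chern subring $B$ for an arbitrary $G$, or show directly that restriction $H^*(BG)\to H^*(BC)$ hits a subring of $\F_p[W]$ whose coinvariants are small, and I do not see how to do this uniformly.

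The instance in which such control is available is exactly \thmref{subgroup thm}: with $A<P$ abelian of maximal order one has $C(P)\subseteq A$, and one may extend $c(P)$ kernel-separating characters of $C(P)$ to $A$; the induced representations $\Ind_A^P$ of these characters — equivalently, the Evens norms $\mathcal{N}_A^P$ along $A<P$ — supply enough explicit elements of $B$ that, because they all come from the single subgroup $A$ of index $|P|/|A|$, the coinvariants can be estimated and the bound $c(P)(2|P|/|A|-1)$ extracted. For a general compact Lie group there is no canonical analogue of this maximal abelian subgroup to anchor the construction, and it is precisely the absence of a uniform handle on $B$ — equivalently, on how much of $H^*(BC(G))$ is detected by $H^*(BG)$ in degrees below $2n(G)$ — that leaves \conjref{chern conj} open.
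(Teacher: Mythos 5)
This statement is a conjecture; the paper does not prove it, and you correctly stop short of claiming a proof. What the paper does in \secref{chern conj section} is reduce \conjref{chern conj} to a statement purely about elementary abelian groups, and your proposal is a looser version of that reduction, so the right comparison is between the two reduction strategies.

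Your setup through the degree count is sound and matches the paper's: restricting a faithful $n$--dimensional $\rho$ to $C=C(G)$ gives weights $z_1,\dots,z_n$ spanning $W=\beta H^1(BC)$, the Chern classes restrict to $\sigma_k(z_1,\dots,z_n)$, $H^*(BC)$ is free over $\F_p[W]$ on generators in degrees $0,\dots,c$, and so it suffices to bound the top degree of $\F_p[W]$ over the ``restricted Chern subring'' by $2(n-c)$. The point you flag as the crux --- that the $\sigma_k$ alone are nowhere near enough, and one needs a usable description of the full subring $B\subseteq\F_p[W]$ hit by restriction --- is exactly right, and your discussion of \thmref{subgroup thm} (extending central characters to a maximal abelian $A$ and inducing up) as the one place where such control is available is also accurate.

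The structural observation you miss, and which is the whole content of the paper's refinement, is that $\im(i^*)$ and its Chern subring are not arbitrary subrings: they are \emph{sub-Hopf algebras} of $H^*(BC)$, since $i^*$ is a map of Hopf algebras. Full sub-Hopf algebras of $S^*(V)$ are classified by filtrations $\FF$ of $V$, with $\HH(\FF)=S^*(V(0)+V(1)^{(1)}+\cdots)$ and with the top coinvariant degree given by the explicit formula $e(\FF)=\sum_k c_k(\FF)(2p^k-1)$. This turns your vague ``understand $B$'' into a concrete combinatorial question. The paper defines $\HH(\rho)$ to be the smallest Hopf algebra containing the Chern classes of $\rho|_C$ and $e(\rho)$ its coinvariant top degree, reduces \conjref{chern conj} to \conjref{e(rho) conj} ($e(\rho)\le 2n-c$ for any faithful $n$--dimensional $\rho$ of a rank-$c$ elementary abelian $C$), and then reduces further to \conjref{rho conj}: that $\HH(\rho)=\HH(\FF_\rho)$ for the explicit filtration $\FF_\rho$ built from the $p$--adic valuations of the weight multiplicities, for which the estimate $e(\FF_\rho)\le 2n-c$ is elementary (it comes down to $\sum_k c_k(\FF_\rho)p^k\le n$). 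So where you end by saying there is ``no uniform handle on $B$'', the paper's answer is that the Hopf-algebra rigidity \emph{is} the uniform handle, and it localizes the remaining difficulty entirely on elementary abelian groups in the precise form of \conjref{rho conj}. Your analysis is honest and identifies the right difficulty, but without the Hopf/filtration framework it does not arrive at the paper's more tractable conjectural endpoint.
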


If this conjecture were true, one could easily deduce that $d^{\U}(G)  \leq 2n(G) - c(G)$: see \remref{rep remark}. This should be compared to the estimate in \cite{hls1}: $d^{\U}(G) \leq n(G)^2$.

Section \ref{new p group results} contains the proofs of \thmref{perm thm}, \thmref{subgroup thm}, a discussion of the conjecture, and the beginning of our most subtle argument: the proof of \thmref{monotone thm}.  Proved by induction on the order of $P$, in \secref{new p group results} it is reduced to a problem about invariants of arbitrary $\Z/p$ actions on subHopf algebras of polynomial algebras over $\F_p$: see \probref{invariant problem}.  This we then deal with in \secref{inv section}, proving results in invariant theory which appear to be new, and should be of independent interest.

\begin{rem} We note that our paper \cite{k5} has tables of examples made using the Jon Carlson's cohomology website \cite{carlson website}.  Thousands more examples are now similarly accessible using the cohomology website of David Green and Simon King \cite{david green's website}.  Their implementation includes the calculation of the restriction of $H^*(BP)$ to $H^*(BC(P))$, so that $e(P)$ can be immediately read off of their data.  For example, one see that if $P$ is the 2--Sylow subgroup of the third Conway group, so $P$ has order 1024, then $e(P) = 7$ and so, combining \thmref{d(P) thm} with the fine points of \thmref{indec < e thm}, we see that $d^{\U}(P) \leq 6$.
\end{rem}

\subsection{Acknowledgements}  The organization of \secref{old results revisited} follows the presentation I gave at the Conference on Algebraic Topology, Group Theory, and Representation Theory held on the Isle of Skye, Scotland in June, 2009. I have tried to keep the audience I had there in mind.  Conversations with Benson, Symonds, and invariant theorists David Wehlau, Jim Shank, and Eddy Campbell have been helpful.

\section{Old results revisited} \label{old results revisited}

In this section, we prove the bounds for $d^{\U}(G)$ given in \thmref{d(G) thm}.  The main steps are as follows, where terminology and notation will be defined in due course: \\
\begin{itemize}
\item $d^{\U}(G) = \max \ \{ d_0(Cess^*(BC_G(E))) \ | \ E < G \}$. See \propref{d(G) cess prop}.\\
\item $d_0(Cess^*(BG)) = e_{prim}(G)$. See \corref{d Cess = e prim cor}. \\
\item $e_{prim}(G) \leq e_{indec}(G)$.   See \corref{prim indec cor}.  \\
\item $e_{indec}(G) \leq e(G) - \dim G$.  See \thmref{indec < e thm} for this and a bit more. \\
\end{itemize}
This last inequality refines using local cohomology as follows: \\
\begin{itemize}
\item $e_{indec}(G) = e(G) + \max\{ e \ | \ H^{c(G),-c(G)+e}_{\mathfrak m}(H^*(BG)) \neq 0\}$. \\ See \thmref{e indec loc coh thm}. \\
\item $H^{s,t}_{\mathfrak m}(H^*(BG)) = 0$ if $s+t > - \dim G$.  This is Symonds' theorem \cite{symonds}. \\
\end{itemize}

\subsection{The basic ring structure of $H^*(BG)$}

We begin by recalling a fundamental example.  If $E = (\Z/p)^r$, and $H^1(E) \simeq E^{\#}$ has basis $x_1, \dots, x_r$, then
\begin{equation*}
H^*(BE) \simeq
\begin{cases}
\F_2[x_1,\dots,x_r] & \text{if $p=2$}  \\ \Lambda(x_1,\dots,x_r) \otimes \F_p[y_1,\dots,y_r] & \text{if $p$ is odd},
\end{cases}
\end{equation*}
where $y_i = \beta(x_i)$.  ($\beta$ is the Bockstein homomorphism.)  Furthermore, addition $E \times E \ra E$ induces a primitively generated Hopf algebra structure on $H^*(BE)$.

More generally, $H^*(BG)$ can be difficult to compute explicitly, particularly when $G$ is a more interesting finite $p$--group.  For example, if $P$ is the 2--Sylow subgroup of $SU_3(4)$, as in \exref{SU3 ex}, a minimal presentation of the algebra $H^*(BP)$ has 26 generators (in degrees up to 11) and 270 relations (in degrees up to 22): see \cite[group \#187]{carlson et al}, or \cite[group \#145]{david green's website}.

In spite of this, some basic ring structure has been known for a long time.  In the late 1960's \cite{quillen} D.Quillen showed that $H^*(BG)$ is Noetherian of Krull dimension $r(G)$; equivalently, $H^*(BG)$ is a finitely generated module over a polynomial subalgebra on $r(G)$ generators.   A decade later J.Duflot \cite{duflot} showed that its depth is at least $c(G)$; equivalently, $H^*(BG)$ is a free module over a polynomial subalgebra on $c(G)$ generators.

\begin{rem} The extreme situation, when $c(G) = r(G)$, happens precisely when $G$ is $p$--central.  Then $H^*(BG)$ will be Cohen--MacCauley: the depth of $H^*(BG)$ will equal its Krull dimension.  In general, there is no group theoretic criterion characterizing either groups $G$ such that the depth of $H^*(BG)$ equals the lower bound $c(G)$, or groups $G$ such that the depth of $H^*(BG)$ equals the upper bound $r(G)$.
\end{rem}

Quillen's idea was to probe $H^*(BG)$ by its restrictions to its elementary abelian $p$--subgroups.  The product over all such restrictions gives a ring homomorphism
$$ q_0: H^*(BG) \ra \prod_{E < G} H^*(BE).$$
Recall that, given $K < G$, the restriction map $H^*(BG) \ra H^*(BK)$ makes $H^*(BK)$ into a finitely generated $H^*(BG)$--module.  Thus the codomain of $q_0$, a ring whose Krull dimension is clearly $r(G)$, is finitely generated over $H^*(BG)$.  Quillen then shows that $\ker(q_0)$ is nilpotent, which then immediately implies the result about Krull dimension.

\subsection{The nilpotent filtration of $\U$}

As the mod $p$ cohomology of a topological space, $H^*(BG)$ is an unstable algebra over the mod $p$ Steenrod algebra $\A_p$.   When $p=2$, we recall that an $\A_p$--module $M$ is unstable if
$Sq^k x = 0$ whenever $k>|x|$. When $p$ is odd, the condition is that $\beta^eP^k x = 0$ if $2k+e>|x|$.  $M$ is an unstable algebra if in addition, it is a graded commutative algebra satisfying both the Cartan and Restriction formulae.

The 1980's featured much remarkable work on $\K$ and $\U$, the categories of unstable algebras and modules, with the algebras $H^*(BE)$ playing a special role. (See \cite{s2} for entry into the extensive literature.)

In the 1995 paper \cite{hls1}, H.-W.Henn, J.Lannes, and L.Schwartz  revisited Quillen's work from this new perspective.  Following \cite{hls1}, we have the following definition.

\begin{defn}  If $M$ is an unstable $\A_p$--module, let $d_0(M)$ be the maximal $d$ such that $M$ contains a nonzero submodule of the form $\Sigma^d N$, with $N$ unstable.  If no such maximum exists, let $d_0(M) = \infty$, and let $d_0(\mathbf 0) = -\infty$.
\end{defn}

Thus the invariant $d^{\U}(G)$ of the introduction is $d_0(H^*(BG))$.

An alternate definition, easily shown equivalent to the one above, is that $d_0(M)$ is the length of the nilpotent filtration \cite{s1} of $M$,
$$ \dots   \subset nil_d M \subset nil_{d-1} \subset nil_1 M \subset nil_0 M = M,$$
where $nil_dM$ is the large submodule in the localizing subcategory of $\U$ generated by the $d$--fold suspensions.

Three elementary properties of $d_0(M)$ are stated in the next lemma.

\begin{lem} {\bf (a)} \ If $M$ is nonzero in degree $d$, but zero in all higher degrees, then $d_0(M) = d$.

\noindent{\bf (b)} \ If $0 \ra M_1 \ra M_2 \ra M_3 \ra 0$ is a short exact sequence in $\U$, then $d_0(M_1) \leq d_0(M_2)$, and $d_0(M_2) \leq \max \{d_0(M_1), d_0(M_3)\}$.

\noindent{\bf (c)} \ $d_0(H^*(B\Z/p)) = 0$.
\end{lem}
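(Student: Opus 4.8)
The plan is to work directly from the definition of $d_0(M)$ as the maximal $d$ for which $M$ contains a nonzero submodule of the form $\Sigma^d N$ with $N$ unstable, using the observation that $\Sigma^d N$ itself lives only in degrees $\geq d$, and that $\F_p$ (concentrated in degree $0$) is a perfectly good unstable module.

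For part (a): if $M$ is concentrated in its top nonzero degree $d$, then the submodule $\Sigma^d N$ generated by any nonzero element must also be concentrated in degree $\geq d$ and nonzero there, so its ``bottom'' must be in degree $d$; taking $N$ to be the degree-$0$ part (an $\F_p$-vector space, hence unstable, since all Steenrod operations necessarily vanish for degree reasons) shows $M = \Sigma^d N$ with $N$ unstable, giving $d_0(M) \geq d$; conversely any $\Sigma^d N$ with $d' > d$ would have to vanish since $M$ vanishes above degree $d$, so $d_0(M) \leq d$. I would also remark that $\Sigma^d N$ with $N$ unstable is a legitimate unstable module precisely when $N$ is concentrated in nonnegative degrees, which holds here.

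For part (b): the first inequality $d_0(M_1) \leq d_0(M_2)$ is immediate, since any submodule $\Sigma^d N \hookrightarrow M_1$ is also a submodule of $M_2$. For the second, $d_0(M_2) \leq \max\{d_0(M_1), d_0(M_3)\}$, I would take a nonzero submodule $\Sigma^d N \hookrightarrow M_2$ realizing (or approaching) $d_0(M_2)$ and intersect with $M_1$: either $\Sigma^d N \cap M_1$ is nonzero, and being a submodule of $\Sigma^d N$ it is still of the form $\Sigma^d N'$ for a nonzero submodule $N' \subset N$ (submodules of $\Sigma^d N$ correspond to submodules of $N$ shifted up by $d$), hence $d_0(M_1) \geq d$; or $\Sigma^d N \cap M_1 = 0$, in which case $\Sigma^d N$ injects into the quotient $M_3$, so $d_0(M_3) \geq d$. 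Some care is needed if $d_0(M_2) = \infty$: then one has such submodules for arbitrarily large $d$, and the same dichotomy forces one of $d_0(M_1), d_0(M_3)$ to be $\infty$ as well. This reduction — that a submodule of $\Sigma^d N$ is $\Sigma^d(\text{submodule of } N)$, and in particular is still $d$-fold suspended — is the only slightly delicate point, but it is just the exactness of the shift functor $\Sigma^d$ together with the fact that $\Sigma^d$ preserves and reflects the property of being unstable for modules concentrated in nonnegative degrees.

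For part (c): apply part (a)? No — $H^*(B\Z/p)$ is not bounded above. Instead I would argue that $H^*(B\Z/p)$ is itself a \emph{reduced} (nil-closed) unstable module: for $p=2$ it is $\F_2[x]$ with $|x|=1$, and for $p$ odd it is $\Lambda(x) \otimes \F_p[y]$; in either case the only submodules of the form $\Sigma^d N$ with $d \geq 1$ would have to be concentrated in degrees $\geq d \geq 1$, but the submodule generated by any homogeneous element $z$ of positive degree contains, via the Steenrod operations (e.g. $Sq^{|z|}z = z^2$, or $P^{?}$ in the odd case, iterated), elements whose degrees are unbounded and, crucially, is \emph{not} a suspension — concretely, one can exhibit an element of the submodule in degree $1$ (when $p=2$, $x$ itself generates everything; when $p$ is odd, $x$ and $y$ do), so no nonzero submodule is supported entirely in degrees $\geq 1$. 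Equivalently and more cleanly, $H^*(B\Z/p)$ embeds in $H^*(B\Z/p)^{\otimes}$-type reduced modules and it is a standard fact (see \cite{s1, s2}) that $H^*(B\Z/p)$ is reduced, i.e. $nil_1 H^*(B\Z/p) = 0$, which is exactly $d_0(H^*(B\Z/p)) = 0$. The nonvanishing in degree $0$ (the unit) prevents $d_0$ from being $-\infty$.

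The main obstacle is part (b)'s second inequality in the case $d_0(M_2) = \infty$ and the bookkeeping around whether ``maximal $d$'' is attained; all of this is handled uniformly by phrasing everything in terms of the nilpotent filtration $nil_\bullet$ and its exactness properties, as recalled just before the lemma, rather than chasing individual submodules.
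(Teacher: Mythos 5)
The paper states this lemma without proof, as elementary background, so there is no argument of the author's to compare against. Parts (a) and (b) of your proposal are essentially correct. In (a) you phrase the hypothesis as if $M$ were concentrated in degree $d$ (writing ``$M = \Sigma^d N$''); the hypothesis only says $M$ vanishes above degree $d$, and $M$ may well be nonzero below. The fix is cosmetic: the degree-$d$ piece $M^d$ is automatically an $\A_p$-submodule (Steenrod operations only raise degree, and $M$ vanishes above $d$), and $M^d = \Sigma^d N$ with $N$ concentrated in degree $0$, hence unstable; your upper-bound observation then closes the argument. Your (b) is fine: the dichotomy together with the facts that $\Sigma^d$ is a full embedding and that submodules of unstable modules are unstable is exactly the right machinery.

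Part (c) contains a genuine error in the direct argument. You claim that the $\A_p$-submodule of $H^*(B\Z/p)$ generated by any positive-degree element contains a class in degree $1$, and hence that no nonzero submodule is supported entirely in positive degrees. Both claims are false: for $p=2$ the ideal $(x^2) \subset \F_2[x]$ is an $\A_2$-submodule supported entirely in degrees $\geq 2$, and the $\A_2$-submodule generated by $x^2$ contains only the powers $x^{2^k}$, nothing in degree $1$. The point is not that submodules are unbounded or must reach low degrees; rather, you must play the operation $Sq^{|z|}z = z^2$ against the \emph{unstable condition on $N$}. If $\Sigma^d N \subseteq H^*(B\Z/p)$ is nonzero with $d \geq 1$ and $N$ unstable, pick a nonzero class $z$ of even degree $m$ lying in the polynomial part (applying $\beta$ once if $p$ is odd and the bottom class is odd). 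Then $z$ corresponds to a class of degree $m - d < m$ in $N$, so instability of $N$ forces $Sq^m z = 0$ (resp.\ $P^{m/2}z = 0$); but in $H^*(B\Z/p)$ one has $Sq^m z = z^2 \neq 0$ (resp.\ $P^{m/2}z = z^p \neq 0$), a contradiction. Your fallback citation to Schwartz for the reducedness of $H^*(BE)$ is valid and does rescue (c), but the direct argument as written does not hold.
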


The next properties are considerably deeper.  References for (a) are \cite[Prop.2.5]{k2} or \cite[Prop.I.3.6]{hls1} .  Property (b) concerns Lannes' functor \cite{L3} $T_E: \U \ra \U$, the left adjoint to the functor $M \rightsquigarrow H^*(BE) \otimes M$, and a reference is \cite[Prop.3.12]{k5}.  Property (c) is due to Henn \cite{henn}.

\begin{prop} \label{d(M) properties} {\bf (a)} \ $d_0(M \otimes N) = d_0(M) + d_0(N)$.

\noindent{\bf (b)} \   $d_0(T_EM) = d_0(M)$.

\noindent{\bf (c)} \ $d_0(M) < \infty$ if $M$ is a finitely generated module over an Noetherian unstable algebra $K$ with structure map $K \otimes M \ra M$ in $\U$.
\end{prop}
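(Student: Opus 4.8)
The three assertions have quite different character: parts (a) and (b) are essentially formal consequences of the Lannes--Schwartz structure theory of the nilpotent filtration, while (c) is Henn's theorem and is the one genuinely difficult point. Throughout I identify $d_0(M)$ with the length of the nilpotent filtration $nil_\bullet M$ and write $gr_n M = nil_nM/nil_{n+1}M$; the structure theory supplies a natural isomorphism $gr_n M \cong \Sigma^n R_nM$ in which $R_nM$ is a \emph{reduced} unstable module (i.e.\ $nil_1 R_nM = 0$), so that $d_0(M) = \max\{\, n : R_nM \neq 0\,\}$, with the conventions $\max\emptyset = -\infty$ and $d_0(\mathbf 0) = -\infty$.

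\emph{Part (a).} The bound $d_0(M\otimes N) \geq d_0(M) + d_0(N)$ is immediate: given inclusions $\Sigma^aA \hookrightarrow M$ and $\Sigma^bB \hookrightarrow N$ with $A,B$ nonzero unstable modules, exactness of the tensor product in $\U$ in each variable yields an inclusion $\Sigma^{a+b}(A\otimes B) \cong \Sigma^aA\otimes\Sigma^bB \hookrightarrow M\otimes N$, and $A\otimes B$ is again nonzero and unstable. The reverse inequality carries the content, and I would deduce it from two structural facts about the nilpotent filtration: that it is compatible with tensor products, inducing on associated gradeds an isomorphism $R_n(M\otimes N) \cong \bigoplus_{i+j=n} R_iM\otimes R_jN$; and that a tensor product of reduced unstable modules is reduced (Lannes--Schwartz; cf.\ \cite{hls1}, \cite{k2}). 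Granting these, if $d_0(M)=a$ and $d_0(N)=b$ then for $n>a+b$ each summand $R_iM\otimes R_jN$ with $i+j=n$ vanishes, since either $i>a$ or $j>b$, so $R_n(M\otimes N)=0$ and $d_0(M\otimes N)\le a+b$; while $R_aM\otimes R_bN$ is a nonzero reduced module sitting inside $R_{a+b}(M\otimes N)$, which gives back the opposite bound. The degenerate cases (one of $d_0(M),d_0(N)$ infinite, or a module $\mathbf 0$) follow from the same two facts.

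\emph{Part (b).} Lannes' functor $T_E$ is exact, commutes with all colimits, and commutes with suspension, being left adjoint to $M\rightsquigarrow H^*(BE)\otimes M$ which plainly commutes with $\Sigma$. Hence $T_E$ carries into itself the localizing subcategory of $\U$ generated by the $n$--fold suspensions, so $T_E(nil_nM)\subseteq nil_n(T_EM)$. Applying the exact functor $T_E$ to $0\to nil_{n+1}M\to M\to M/nil_{n+1}M\to 0$ and to the finite filtration of $M/nil_{n+1}M$ by its graded pieces $\Sigma^iR_iM$ $(i\le n)$ — and using that $T_E$ preserves reduced modules, a standard property of the theory (cf.\ \cite{L3}, \cite{s2}) — one finds that $nil_{n+1}\big(T_E(M/nil_{n+1}M)\big)=0$, whence $nil_{n+1}(T_EM)=T_E(nil_{n+1}M)$; as this holds for every $n$, $T_E$ commutes with the nilpotent filtration. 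Therefore $d_0(T_EM)=\max\{\,n : T_E(nil_nM)\neq 0\,\}$, and it remains only to observe that $T_E$ kills no nonzero unstable module: its right adjoint $M\rightsquigarrow H^*(BE)\otimes M$ preserves injectives (being right adjoint to an exact functor) and sends a cogenerating family of injectives of $\U$, namely the $H^*(BV)\otimes J(n)$, to the family $H^*(B(E\times V))\otimes J(n)$, which still cogenerates $\U$; so $T_EM=0$ forces $M=0$. Thus $d_0(T_EM)=d_0(M)$.

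\emph{Part (c).} This is Henn's theorem \cite{henn}, and here the work is real. My plan would be an induction on the Krull dimension of the Noetherian unstable algebra $K$. If $\dim K=0$ then $K$, and hence the finitely generated $K$--module $M$, is finite--dimensional over $\F_p$, and $d_0(M)$ is its top nonzero degree by the elementary Lemma stated above. If $\dim K>0$, one brings in Quillen's theorem — $K$ is detected, modulo a nilpotent ideal, by its restrictions to the finitely many elementary abelian subgroups $E$ appearing in the prime spectrum of $K$ — together with the functors $T_E$ for those $E$: over a localization of $K$ of strictly smaller dimension, or a finite--dimensional piece, $T_EM$ is forced to be concentrated near the top of its nilpotent filtration, $d_0(T_EM)=d_0(M)$ by part (b), and one glues these estimates over the stratification of the spectrum. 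I expect this step to be the main obstacle, and it is why (c) is a named theorem rather than a formal corollary of (a) and (b): one must control the reduced pieces $R_nM$ \emph{simultaneously for all weights $n$}, and finite generation over $K$ does not make $M$ locally finite — already $H^*(B\Z/p)$ is not — so there is no finite free resolution of $M$ in $\U$ to fall back on. It is exactly this difficulty that Henn's argument disposes of.
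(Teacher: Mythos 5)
The paper does not actually prove any part of this proposition: it records (a) as \cite[Prop.2.5]{k2} or \cite[Prop.I.3.6]{hls1}, (b) as \cite[Prop.3.12]{k5}, and (c) as Henn's theorem \cite{henn}, so there is no in-paper argument to match your proposal against. Measured against the cited proofs, your sketches of (a) and (b) are correct in outline and rest on exactly the right structural inputs: for (a), that the nilpotent filtration of a tensor product is the convolution filtration and that a tensor product of reduced modules is reduced (this second fact is the real content of \cite[Prop.2.5]{k2}); for (b), that $T_E$ is exact, commutes with suspension and with the nilpotent filtration, and is faithful. For the faithfulness step in (b) your cogenerator argument works but is more roundabout than necessary: the splitting $H^*(BE)=\F_p\oplus\widetilde H^*(BE)$ induces a natural splitting $T_EM\cong M\oplus \bar T_EM$, so $M$ is a direct summand of $T_EM$; this gives $T_EM=0\Rightarrow M=0$ and the inequality $d_0(M)\le d_0(T_EM)$ in one line.

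Part (c) is where your text stops being a proof. The plan of inducting on the Krull dimension of $K$ and "gluing estimates over the stratification of the spectrum" names the strategy but the gluing step \emph{is} the theorem: one must show that finitely many of the reduced pieces $R_nM$ are nonzero, and nothing in your outline controls how the strata interact or why the induction on $\dim K$ closes up (a localization of $K$ is not again an unstable algebra in any obvious sense, so "over a localization of $K$ of strictly smaller dimension" needs to be replaced by an actual construction, which is what Henn supplies). Since the paper itself defers entirely to \cite{henn} for (c), this is not a gap relative to the paper, but your paragraph should be presented as a pointer to Henn's argument rather than as a proof sketch that could be completed as written.
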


\subsection{The comparison between $d^{alg}(G)$ and $d^{\U}(G)$}

Note that property $(c)$ of the last proposition implies that $d^{\U}(G) < \infty$, so that the nilpotent filtration of $H^*(BG)$ has finite length.

In \cite{hls1}, the authors show how to generalize Quillen's map $q_0$ to realize the nilpotent filtration of $H^*(BG)$.  For each $d \geq 0$, let
$$q_d: H^*(BG) \ra  \prod_E H^*(BE) \otimes H^{\leq d}(BC_G(E))$$
be the map of unstable algebras with components induced by the the group homomorphisms $E \times C_G(E) \ra G$.  Here $M^{\leq d}$ denotes the quotient of a graded module $M$ by all elements of degree more than $d$.

They observe that $\ker q_d = nil_{d+1} H^*(BG)$, and so we have the following.

\begin{prop}  $d^{\U}(G)$ is the minimal $d$ such that $q_d$ is monic.
\end{prop}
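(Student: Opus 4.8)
The plan is to read off the proposition from the displayed observation $\ker q_d = nil_{d+1} H^*(BG)$ of Henn--Lannes--Schwartz, together with the alternate description of $d_0$ as the length of the nilpotent filtration. Since $q_d$ is a morphism of unstable algebras, it is monic if and only if $\ker q_d = 0$, and by the observation this occurs if and only if $nil_{d+1} H^*(BG) = 0$. So everything reduces to the translation that ``$nil_{d+1} H^*(BG) = 0$'' is equivalent to ``$d \geq d^{\U}(G)$''.

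For that translation I would first recall that $d^{\U}(G) < \infty$ by \propref{d(M) properties}(c) (applied to $K = M = H^*(BG)$, using Quillen's Noetherian-ness), so that ``the minimal $d$'' is meaningful. Next, the nilpotent filtration is decreasing, $nil_{d+1} M \subseteq nil_d M$, so the set of $d$ with $nil_{d+1} H^*(BG) = 0$ is upward closed in $\N$. By the definition of $d_0$ as the length of the filtration $\cdots \subset nil_d M \subset \cdots \subset nil_0 M = M$, we have $d^{\U}(G) = d_0(H^*(BG)) = \max\{\, d : nil_d H^*(BG) \neq 0 \,\}$, so $nil_{d+1} H^*(BG) = 0$ exactly when $d \geq d^{\U}(G)$. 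Combining this with the first paragraph, $q_d$ is monic exactly when $d \geq d^{\U}(G)$, whence the minimal such $d$ is $d^{\U}(G)$.

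I expect the only genuine content here to be the cited identification $\ker q_d = nil_{d+1} H^*(BG)$, which I would not reprove; everything else is unwinding definitions. It is worth recalling in a sentence what that identification rests on: by adjointness of Lannes' functor $T_E$ with $M \rightsquigarrow H^*(BE)\otimes M$, the $E$--component of $q_d$ corresponds to a map $T_E H^*(BG) \to H^{\leq d}(BC_G(E))$; Lannes' computation identifies $T_E H^*(BG)$ with a product, over $G$--conjugacy classes of homomorphisms $E \to G$, of the cohomologies of the corresponding centralizers, so that this map becomes the evident truncation; and the nilpotent filtration of a module in $\U$ is characterized precisely by the vanishing of such truncated $T_E$--data. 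If one wanted a self-contained treatment, assembling this $T$--functor package would be the main obstacle; granting it, the proposition is formal.
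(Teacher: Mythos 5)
Your argument is correct and is exactly the paper's: the proposition is stated there as an immediate consequence of the Henn--Lannes--Schwartz identification $\ker q_d = nil_{d+1}H^*(BG)$ together with the description of $d_0$ as the length of the nilpotent filtration, with finiteness supplied by \propref{d(M) properties}(c). Your closing sketch of why the identification holds via $T_E$ is accurate background, but like the paper you correctly treat it as cited rather than reproved.
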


If $I$ is a nilpotent ideal in a graded Noetherian ring, let $d^{alg}(I)$ be the maximal $d$ such that $I^d \neq 0$.  Thus the invariant $d^{alg}(G)$ of the introduction is $d^{alg}(\Rad(G))$.
Note that
\begin{equation*}
d^{alg}(H^*(BE) \otimes \Tilde H^{\leq d}(BC_G(E))) =
\begin{cases}
d^{alg}(\tilde H^{\leq d}(BC_G(E)) & \text{if } p=2 \\ d^{alg}(\Lambda(E^{\#}) \otimes \tilde H^{\leq d}(BC_G(E))) & \text{if } p \text{ is odd}.
\end{cases}
\end{equation*}

\begin{cor}  \label{alg vs module cor} With $d=d^{\U}(G)$, \begin{equation*}
d^{alg}(G) \leq
\begin{cases}
{\displaystyle \max_E} \{d^{alg}(\tilde H^{\leq d}(BC_G(E))\}  \leq d & \text{if } p=2 \\ {\displaystyle \max_E} \{d^{alg}(\tilde H^{\leq d}(BC_G(E))+ r(E)\} \leq d + r(G)& \text{if } p \text{ is odd}.
\end{cases}
\end{equation*}
\end{cor}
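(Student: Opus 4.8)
The plan is to exploit the preceding proposition—that for $d = d^{\U}(G)$ the map
$$q_d \colon H^*(BG) \lra \prod_E H^*(BE) \otimes H^{\leq d}(BC_G(E))$$
is a monomorphism—and then to estimate, factor by factor, the nilpotence degree of the nilradical of the target.

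First I would observe that a monomorphism of graded commutative $\F_p$--algebras carries a nonzero product of nilpotent elements to a nonzero product of nilpotent elements. So if $d^{alg}(G) = m$, witnessed by $z_1 \cdots z_m \neq 0$ with each $z_i \in \Rad(G)$, then $q_d(z_1), \dots, q_d(z_m)$ are nilpotent in the target with nonzero product; projecting onto a factor in which that product is nonzero, there is some $E$ for which the components $q_d(z_i)_E \in H^*(BE) \otimes H^{\leq d}(BC_G(E))$ are nilpotent with nonzero product. (This step is valid whether or not there are infinitely many $E$, since one never needs to decompose the nilradical of the whole product.) Hence
$$ d^{alg}(G) \ \leq \ \max_E \ d^{alg}\!\left( \Rad\!\left(H^*(BE) \otimes H^{\leq d}(BC_G(E))\right)\right),$$
and the task reduces to estimating the right-hand side.

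Write $C = H^{\leq d}(BC_G(E))$. Since $C$ is a connected truncated algebra, every positive--degree element of $C$ is nilpotent, so $\Rad(C) = \tilde C$, and since a product of $m$ positive--degree classes has degree at least $m$ we have $d^{alg}(\tilde C) \leq d$. For $p = 2$, $H^*(BE)$ is polynomial, hence reduced, so $\Rad(H^*(BE) \otimes C) = H^*(BE) \otimes \tilde C$; tensoring a nilpotent graded ideal with a polynomial algebra does not change its nilpotence degree (this is the identity displayed just above), which gives $d^{alg}(G) \leq \max_E d^{alg}(\tilde H^{\leq d}(BC_G(E))) \leq d$. For $p$ odd, $H^*(BE) = \Lambda(E^{\#}) \otimes \F_p[y_1,\dots,y_{r(E)}]$; discarding the polynomial factor as before, I would compute $\Rad(\Lambda(E^{\#}) \otimes C)$: the quotient of $\Lambda(E^{\#}) \otimes C$ by $\Lambda^+(E^{\#}) \otimes C + \Lambda(E^{\#}) \otimes \tilde C$ is the reduced ring $\F_p$, while the latter is a sum of two nilpotent ideals, so the nilradical is exactly $\Lambda^+(E^{\#}) \otimes C + \Lambda(E^{\#}) \otimes \tilde C$. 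The step I expect to be the main obstacle—and the one where the odd case picks up the extra summand $r(E)$—is bounding the nilpotence degree of this ideal: writing each of $m$ of its elements as $u_i + v_i$ with $u_i \in \Lambda^+(E^{\#}) \otimes C$ and $v_i \in \Lambda(E^{\#}) \otimes \tilde C$ and expanding $\prod_i (u_i+v_i)$ over subsets $S \subseteq \{1,\dots,m\}$, the $S$--summand lies in $\Lambda^{\geq |S|}(E^{\#}) \otimes \tilde C^{\,m-|S|}$, hence vanishes once $|S| > r(E)$ or $m - |S| > d^{alg}(\tilde C)$; when $m > r(E) + d^{alg}(\tilde C)$ one of these holds for every $S$, so $d^{alg}(\Rad(\Lambda(E^{\#}) \otimes C)) \leq r(E) + d^{alg}(\tilde C)$. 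Combining with $d^{alg}(\tilde C) \leq d$ and $r(E) \leq r(G)$ then yields $d^{alg}(G) \leq \max_E \{ d^{alg}(\tilde H^{\leq d}(BC_G(E))) + r(E)\} \leq d + r(G)$.
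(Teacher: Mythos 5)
Your proposal is correct and follows the same route the paper intends: use the monicity of $q_d$ for $d = d^{\U}(G)$ to push a nonzero product of $m$ nilpotent classes into a single factor $H^*(BE)\otimes H^{\leq d}(BC_G(E))$, then compute the nilpotence degree of that factor's nilradical, with the polynomial part contributing nothing and, for odd $p$, the exterior part contributing at most $r(E)$. Your subset-expansion argument for the odd-prime bound is exactly the content of the paper's displayed ``Note that'' equation, just written out in full.
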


\subsection{Central essential cohomology}

The following definition from \cite{k5} is a variant of Carlson's Depth Essential Cohomology \cite{carlson et al}.

\begin{defn} \ Let $Cess^*(BG)$ be the kernel of the map
$$ H^*(BG) \ra \prod_{C(G)\lvertneqq E} H^*(BC_G(E)).$$
\end{defn}

This is an unstable $\A$--module. $Cess^*(BG) = H^*(BG)$ exactly when the product is over the empty set, i.e. $G$ is $p$--central.  $Cess^*(BG)$ can also be zero: as we will see, $Cess^*(BG) \neq 0$ if and only if the depth of $H^*(BG) = c(G)$.

\begin{prop} \label{d(G) cess prop}  $d^{\U}(G) = \max \ \{ d_0(Cess^*(BC_G(E))) \ | \ E < G \}$.
\end{prop}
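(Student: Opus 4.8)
The plan is to interpose $Cess^*$ between $H^*(BG)$ and the rest of the product $\prod_E H^*(BE) \otimes H^{\leq d}(BC_G(E))$, and to run an induction on $\dim G$ (or, for finite groups, on $|G|$). Concretely, I would first set $d = d^{\U}(G)$ and recall from the preceding Proposition that $d$ is the minimal integer for which $q_d$ is monic, i.e.\ $nil_{d+1}H^*(BG) = 0$. The factor of $q_d$ indexed by the trivial subgroup $E = 1$ is the identity $H^*(BG) \to H^*(BG)$, so $q_d$ is automatically monic; the content is that $nil_{d+1}H^*(BG) = 0$ is detected by the remaining factors together with the central one, and one wants to relocalize this detection onto the collection of centralizers $C_G(E)$.

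The key step is a commuting diagram. Restriction along $C(G) < G$ gives a map $H^*(BG) \to H^*(BC(G))$ whose kernel is, by definition, complementary information to $Cess^*(BG)$: more precisely, I would use the localization/ascending-centralizer technology (the map $H^*(BG) \to \prod_{C(G) \lvertneqq E} H^*(BC_G(E))$ whose kernel is $Cess^*(BG)$) to fit $Cess^*(BG)$ into a picture where the quotient $H^*(BG)/Cess^*(BG)$ embeds into a product of $H^*(BC_G(E))$ with $C(G) \lvertneqq E$, and each such $C_G(E)$ is a proper subquotient — of strictly smaller dimension, or, if $\dim$ is unchanged, with a strictly larger central elementary abelian subgroup — so induction applies to it. Applying Lemma (b) (the behavior of $d_0$ on short exact sequences) to
$$ 0 \to Cess^*(BG) \to H^*(BG) \to H^*(BG)/Cess^*(BG) \to 0 $$
gives $d^{\U}(G) = d_0(H^*(BG)) \le \max\{ d_0(Cess^*(BG)),\ d_0(H^*(BG)/Cess^*(BG))\}$, and the second term is controlled by $\max_{C(G) \lvertneqq E} d^{\U}(C_G(E))$, which by induction is $\max_{C(G) \lvertneqq E}\max_{E' < C_G(E)} d_0(Cess^*(BC_{C_G(E)}(E')))$. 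The combinatorial point is that subgroups of the form $C_{C_G(E)}(E')$ are exactly centralizers $C_G(E'')$ in $G$ for suitable $E''$ (namely $E'' = E E'$ up to the relevant identifications), so the double maximum collapses to a single maximum over $E'' < G$ with $E'' \gneqq C(G)$, which is absorbed into the right-hand side of the claimed formula. For the reverse inequality, each $Cess^*(BC_G(E))$ sits inside $H^*(BC_G(E))$, and the composite $H^*(BG) \to H^*(BC_G(E))$ (induced by $E \times C_G(E) \to G$, the relevant component of $q_\infty$) together with Lannes'-functor invariance (Proposition \ref{d(M) properties}(b), applied to $T_E$) shows $d_0$ of each such submodule is at most $d^{\U}(G)$; one must check the submodules $Cess^*(BC_G(E))$ jointly detect all of $nil_{d+1}$, i.e.\ that if all of them vanish in filtration $> d$ then so does $H^*(BG)$.

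The main obstacle I anticipate is precisely the bookkeeping in that last detection claim: showing that the full collection $\{Cess^*(BC_G(E)) : E < G\}$ carries the nilpotent filtration of $H^*(BG)$ faithfully. This requires matching the factors of the HLS map $q_d$ for $G$ against those for the various $C_G(E)$, using that $C_G(E') \cap C_G(E) = C_G(E E')$ and that $C(C_G(E)) \supseteq C(G) \cdot E$, so that ``$E'$ is not central in $C_G(E)$'' corresponds cleanly to ``$E E'$ strictly contains the relevant central part.'' The representation-theoretic input — that the $E$-fixed-point/Lannes-functor formalism identifies $T_E H^*(BG)$ with $H^*(BC_G(E))$-type data — is standard, so the real work is organizing the poset of elementary abelians and its centralizers so the induction closes without circularity. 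Everything else (the exact sequence arguments, the monotonicity of $d_0$) is formal given the results already quoted.
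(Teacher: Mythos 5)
Your argument is essentially the paper's proof: the exact sequence $0 \to Cess^*(BG) \to H^*(BG) \to \prod_{C(G)\lvertneqq E} H^*(BC_G(E))$ together with the short-exact-sequence lemma for $d_0$ gives the upper bound, the induction (the paper runs it as downward induction on the rank of $C(G)$, which strictly increases when passing to $C_G(E)$ for $C(G)\lvertneqq E$) collapses the double maximum via $C_{C_G(E)}(E') = C_G(EE')$, and the reverse inequality is exactly the quoted fact that $H^*(BC_G(E))$ is a summand of $T_EH^*(BG)$, so $d_0(Cess^*(BC_G(E)))\leq d^{\U}(C_G(E))\leq d^{\U}(G)$. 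The only slips are your aside that the $E=1$ factor of $q_d$ is the identity (it is the truncation $H^*(BG)\to H^{\leq d}(BG)$, so $q_d$ is certainly not automatically monic) and the ``detection claim'' you flag at the end as the main obstacle, which is in fact already discharged by your own inductive upper bound rather than being an extra obligation.
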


To prove this, we first need the following consequence of the calculation of $T_EH^*(BG)$ due to Lannes \cite{lannes 2}.

\begin{prop}  For all $E < G$, $H^*(BC_G(E))$ is a summand of $T_EH^*(BG)$, and thus $d^{\U}(C_G(E)) \leq d^{\U}(G)$.
\end{prop}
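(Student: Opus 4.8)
The plan is to read this off from Lannes' computation of $T_E H^*(BG)$, using only the formal properties of $d_0$ already in hand. Recall that Lannes' theorem \cite{lannes 2} --- valid because $H^*(BG)$ is of finite type and, for $G$ compact Lie, there are only finitely many $G$--conjugacy classes of homomorphisms $E \ra G$ --- identifies $T_E H^*(BG)$, as an unstable algebra, with the finite product
$$ T_E H^*(BG) \ \cong \ \prod_{[\rho] \in \Rep(E,G)} H^*(BC_G(\rho(E))),$$
where $\Rep(E,G)$ is the set of $G$--conjugacy classes of homomorphisms $\rho \colon E \ra G$, and the factor indexed by $[\rho]$ is the one induced by the homomorphism $E \times C_G(\rho(E)) \ra G$, $(e,g) \mapsto \rho(e)g$. (At the level of spaces this is the decomposition $\Map(BE, BG) \simeq \coprod_{[\rho]} BC_G(\rho(E))$.) Since $\Rep(E,G)$ is finite, this product is a finite biproduct in the abelian category $\U$ (indeed in $\K$), so each factor is simultaneously a sub--object and a retract of $T_E H^*(BG)$.

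First I would take $\rho = \iota$ to be the inclusion of the given subgroup $E < G$, a legitimate point of $\Rep(E,G)$, with $C_G(\iota(E)) = C_G(E)$. The previous paragraph then exhibits $H^*(BC_G(E))$ as a direct summand of $T_E H^*(BG)$ in $\U$, which is the first assertion. It is worth keeping straight that this is not the basepoint component: the trivial homomorphism contributes the factor $H^*(BG)$ itself, recovering the familiar retraction of $H^*(BG)$ off $T_E H^*(BG)$.

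For the inequality I would combine three inputs. As a submodule of $T_E H^*(BG)$, the module $H^*(BC_G(E))$ satisfies $d_0(H^*(BC_G(E))) \leq d_0(T_E H^*(BG))$ by the monotonicity of $d_0$ along inclusions (the first clause of part (b) of the elementary lemma on $d_0$ recalled above). By \propref{d(M) properties}(b), $d_0(T_E H^*(BG)) = d_0(H^*(BG))$. Finally, unwinding definitions, $d^{\U}(C_G(E)) = d_0(H^*(BC_G(E)))$ and $d^{\U}(G) = d_0(H^*(BG))$. Stringing these together gives $d^{\U}(C_G(E)) \leq d^{\U}(G)$.

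The serious input here is Lannes' computation, which we treat as known; within the present argument the only points needing a little care are the finiteness of $\Rep(E,G)$ for a general compact Lie group $G$ --- a standard rigidity fact for homomorphisms of a finite group into a compact Lie group, needed so that the product is genuinely a biproduct --- and the bookkeeping that picks out the component of $\Map(BE,BG)$ attached to the inclusion $E \hra G$. Neither is a real obstacle, consistent with the surrounding text presenting this as a short consequence.
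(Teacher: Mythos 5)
Your argument is exactly the one the paper has in mind: the paper states this proposition as a direct consequence of Lannes' computation of $T_E H^*(BG)$ without writing out the details, and you have filled them in correctly --- pick out the factor of $\prod_{[\rho]} H^*(BC_G(\rho(E)))$ indexed by the inclusion $E \hra G$, note it is a $\U$--retract (hence a submodule) because the product is finite and $\U$ is abelian, then combine $d_0$--monotonicity for submodules with $d_0(T_E M) = d_0(M)$. One small caveat: the parenthetical ``indeed in $\K$'' overstates things, since in the category of unstable \emph{algebras} the factor is a quotient via the projection but the splitting map $x \mapsto (0,\dots,x,\dots,0)$ does not preserve the unit, so it is a retract in $\U$ but not in $\K$; this does not affect the argument, which only uses the $\U$--structure.
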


\begin{proof}[Proof of \propref{d(G) cess prop}]  This follows by downward induction on the rank of $C(G)$.  From the exact sequence
$$ 0 \ra Cess^*(BG) \ra H^*(BG) \ra \prod_{C(G)\lvertneqq E} H^*(BC_G(E)),$$
one sees that
$$ d^{\U}(G) \leq \max \ \{ d_0(Cess^*(BG)), d^{\U}(C_G(E)) \ | \ C(G)\lvertneqq E < G \}.$$
But this inequality is an equality by the last proposition.
\end{proof}

\subsection{Primitives in central essential cohomology}

For the rest of this section, we fix a compact Lie group $G$, and let $C = C(G)$.

By an unstable $H^*(BC)$--comodule, we will mean an unstable module $M$ having an $H^*(BC)$--comodule structure map $\Delta: M \ra H^*(BC) \otimes M$ that is in the category $\U$.   Examples of interest to us include $H^*(BG)$, $H^*(BC_G(E))$ for all $E <G$, and $Cess^*(BG)$, where the comodule structures are all induced by the group homomorphism $C \times G \ra G$ sending $(c,g)$ to $cg$.

\begin{defns} If $M$ is an unstable $H^*(BC)$--comodule, we define its associated module of primitives to be
$$ P_CM = \{ x \in M \ | \ \Delta(x) = 1 \otimes x \} = \text{Eq } \{ M
\begin{array}{c} \Delta \\[-.08in] \longrightarrow \\[-.1in] \longrightarrow \\[-.1in] i
\end{array}
H^*(C) \otimes M \}.$$
If $P_CM$ is finite dimensional, we let $e_{prim}(M)$ be its largest nonzero degree, or $-\infty$ if $M = \mathbf 0$.
\end{defns}

Note that $P_CM$ is again an unstable module.

\begin{lem}  If $M$ is an unstable $H^*(BC)$--comodule, and $P_CM$ is finite dimensional, then $d_0(M) = e_{prim}(M).$
\end{lem}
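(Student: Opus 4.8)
The plan is to prove the two inequalities $d_0(M) \le e_{prim}(M)$ and $e_{prim}(M) \le d_0(M)$ separately, using the $H^*(BC)$--comodule structure as the bridge to the nilpotent filtration. Throughout, write $C = C(G)$, so $H^*(BC)$ is a primitively generated Hopf algebra (polynomial if $p=2$, exterior-tensor-polynomial if $p$ odd), and recall that the equalizer $P_CM$ is an unstable submodule of $M$.

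For the inequality $e_{prim}(M) \le d_0(M)$: since $P_CM$ is assumed finite dimensional, part (a) of the first Lemma tells us $d_0(P_CM) = e_{prim}(M)$ (the largest degree in which $P_CM$ is nonzero). Since $P_CM$ is a submodule of $M$, part (b) of that same Lemma gives $d_0(P_CM) \le d_0(M)$, hence $e_{prim}(M) \le d_0(M)$. This direction is essentially formal.

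For the reverse inequality $d_0(M) \le e_{prim}(M)$, which I expect to be the heart of the matter: the idea is to exhibit $M$, up to the nilpotent filtration, as built from shifts of $P_CM$ by copies of $H^*(BC)$ — and then exploit that $H^*(BC)$ itself has $d_0 = 0$ (it is the cohomology of a product of $B\Z/p$'s, so by Proposition~\ref{d(M) properties}(a) and the first Lemma(c), $d_0(H^*(BC)) = 0$). Concretely, I would use the standard structure theory of comodules over the Hopf algebra $H^*(BC)$: an unstable $H^*(BC)$--comodule that is suitably finite (here, with finite-dimensional primitives) should be, as a comodule, a filtered object whose associated graded is a sum of shifted copies of $H^*(BC)$ indexed by a basis of $P_CM$ — an "extended comodule" / cofree-comodule decomposition. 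If each such summand is $\Sigma^j H^*(BC)$ with $j$ a degree in which $P_CM$ is nonzero, so $j \le e_{prim}(M)$, then by Proposition~\ref{d(M) properties}(a) we get $d_0(\Sigma^j H^*(BC)) = j + 0 = j \le e_{prim}(M)$, and repeated application of part (b) of the first Lemma to the filtration yields $d_0(M) \le e_{prim}(M)$. One clean way to make the comodule decomposition precise: use Lannes' $T_C$ functor. The comodule structure $\Delta\colon M \to H^*(BC)\otimes M$ corresponds by adjunction to a map $T_C M \to M$, and the calculation of $T_C$ on unstable comodules (in the spirit of \cite{lannes 2}, and as used already for Proposition~\ref{d(G) cess prop}) identifies $P_CM$ with an appropriate summand; combining this with Proposition~\ref{d(M) properties}(b), which says $T_C$ preserves $d_0$, should let me transfer the computation to $P_CM$ and invoke the first Lemma(a).

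The main obstacle is making the "associated graded of $M$ is a sum of shifted copies of $H^*(BC)$" step rigorous in the unstable setting: one must check that the filtration by comodule degree can be chosen compatibly with the $\A_p$--action (so that the short exact sequences live in $\U$ and part (b) of the first Lemma applies), and that finite-dimensionality of $P_CM$ is exactly the hypothesis needed to guarantee the sum is locally finite so the induction terminates. I would handle this by filtering $M$ by the coradical-type filtration associated to the augmentation ideal of $H^*(BC)$, noting each subquotient is a trivial comodule, hence a sum of shifts of $\F_p$, and then tracking degrees: the shifts appearing in the $k$-th subquotient are bounded by $e_{prim}(M)$ because a primitive in degree $> e_{prim}(M)$ would contradict the definition of $e_{prim}$. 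If a direct filtration argument proves delicate, the $T_C$-adjunction route above is the robust fallback, since both Proposition~\ref{d(M) properties}(b) and the explicit description of $T_C H^*(BG)$ are already in hand.
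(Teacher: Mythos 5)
Your first inequality, $e_{prim}(M) \leq d_0(M)$, is exactly the paper's argument and is fine. The gap is in the reverse direction, and the single idea you are missing is the paper's key step: with $e = e_{prim}(M)$, the composite
$$ M \xra{\Delta} H^*(BC) \otimes M \era H^*(BC) \otimes M^{\leq e}$$
is \emph{monic} (its kernel is a subcomodule of $M$, and a nonzero subcomodule of a comodule over a connected coalgebra contains a nonzero primitive in its bottom degree, which would be an element of $P_CM$ in degree $>e$). Monicity plus $d_0(H^*(BC) \otimes M^{\leq e}) = d_0(H^*(BC)) + d_0(M^{\leq e}) \leq 0 + e$ finishes the proof in one line. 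Neither of your proposed substitutes delivers this bound.

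Concretely: your ``associated graded is a sum of shifted copies of $H^*(BC)$ indexed by a basis of $P_CM$'' asserts that $M$ is cofree cogenerated by its primitives, which is false for general unstable comodules --- the trivial comodule $\Sigma^2\F_p$ already has no such decomposition. Your fallback, the coradical filtration, does have trivial comodules as subquotients, but the claim that ``the shifts appearing in the $k$-th subquotient are bounded by $e_{prim}(M)$'' is wrong: take $M = H^*(B\Z/p)$ as a comodule over itself, where $e_{prim}(M)=0$ yet the coradical subquotients contain classes in arbitrarily high degree, so the extension inequality $d_0(M_2) \leq \max\{d_0(M_1),d_0(M_3)\}$ applied to that filtration yields only $d_0(M) \leq \infty$. (The elements of high coradical filtration are not primitives of $M$, so their degrees are not controlled by $e_{prim}(M)$.) Finally, the $T_C$-adjunction route is only a pointer, not an argument: Proposition~\ref{d(M) properties}(b) says $T_C$ preserves $d_0$, but you have not exhibited any map or isomorphism that transfers $d_0(M)$ to $d_0(P_CM)$, and none is needed once one has the monomorphism displayed above.
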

\begin{proof} Assume $P_CM$ is finite dimensional with largest nonzero degree $e = e_{prim}(M)$.  Then $e= d_0(P_CM)$.  Since $P_CM$ is an unstable submodule of $M$, $d_0(P_CM) \leq d_0(M)$.  Finally, the composite
$$ M \xra{\Delta} H^*(BC) \otimes M \twoheadrightarrow H^*(BC) \otimes M^{\leq e}$$
will be monic, so that $$d_0(M) \leq d_0(H^*(BC) \otimes M^{\leq e}) = d_0(M^{\leq e}) = e.$$
\end{proof}

\begin{prop} \label{fin dim prim prop} $P_CCess^*(BG)$ is finite dimensional.
\end{prop}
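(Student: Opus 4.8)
The plan is to reduce finite-dimensionality of $P_C Cess^*(BG)$ to Duflot's depth theorem plus Quillen's detection-modulo-nilpotents, exploiting the fact that $Cess^*(BG)$ is by construction designed to capture exactly the ``$C$-part'' of the cohomology. First I would recall that since $C = C(G)$ is a central elementary abelian $p$-subgroup, the multiplication $C \times G \to G$ makes $H^*(BG)$ a comodule over $H^*(BC)$; dually, $H^*(BG)$ is a module over the polynomial part of $H^*(BC)$ (the subalgebra generated by the degree-$2$ Bocksteins of $H^1(BC)$ when $p$ is odd, or all of $H^*(BC)$ when $p=2$), and by Duflot's theorem this polynomial subalgebra $R$ on $c(G)$ generators acts freely, i.e.~$H^*(BG)$ is a free $R$-module. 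The key dictionary fact is that taking $H^*(BC)$-comodule primitives $P_C(-)$ is, up to the exterior factor when $p$ is odd, the same as passing to the cofiber of the $R$-action, i.e.~$H^*(BG)/R^+ H^*(BG)$ together with (for $p$ odd) taking primitives under the exterior coalgebra $\Lambda(C^\#)$, which only cuts things down further. So $P_C Cess^*(BG)$ embeds in $P_C H^*(BG)$, which is a subquotient of $H^*(BG)/R^+H^*(BG)$.

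The second step is to show that $H^*(BG)/R^+ H^*(BG)$ is \emph{not} itself finite dimensional in general (it isn't, unless $G$ is $p$-central), so the finiteness must come from the $Cess$ condition, namely the vanishing after restriction to all $H^*(BC_G(E))$ with $C \lvertneqq E$. Here the idea is Quillen-theoretic: an element of $H^*(BG)$ that restricts to zero on $H^*(BC_G(E))$ for every $E$ strictly containing $C$ must, modulo nilpotents, be supported only on the ``stratum'' indexed by $C$ itself. Concretely, $Cess^*(BG)$ maps to zero under $q_0$ restricted to all $E \neq C$, hence its image under the full Quillen map lands in $H^*(BC)$ alone; combined with $\ker q_0$ being nilpotent, $Cess^*(BG)$ has Krull dimension $\leq c(G)$ as a module. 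But $Cess^*(BG)$ is also free over $R$ (being an $R$-submodule summand — or at least torsion-free — of the free module $H^*(BG)$, using Duflot), so it has depth $\geq c(G)$ over $R$; a free module over a polynomial ring on $c(G)$ generators with Krull dimension exactly $c(G)$ must be finitely generated over that ring. Therefore $Cess^*(BG)/R^+ Cess^*(BG)$ is finite dimensional, and $P_C Cess^*(BG)$, being a submodule of (a subquotient isomorphic to a piece of) this, is finite dimensional.

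The main obstacle I expect is the bookkeeping at odd primes: at $p=2$ one has $H^*(BC) = R$ and comodule primitives correspond cleanly to the $R$-coinvariants quotient, but at odd $p$ the Hopf algebra $H^*(BC) = \Lambda(C^\#) \otimes \F_p[y_1,\dots,y_c]$ has both an exterior and a polynomial part, so I must be careful that $P_C M$ really is computed by first killing the polynomial action and then taking $\Lambda(C^\#)$-primitives, and that this two-step process still yields something finite-dimensional (the exterior primitives of a finite-dimensional comodule are automatically finite-dimensional, so the only real content is the polynomial step, which is Duflot plus the Krull-dimension bound from the $Cess$ condition). A secondary subtlety is justifying that $Cess^*(BG)$, as a submodule of $H^*(BG)$, is still free — or at least has depth $\geq c(G)$ — over $R$; I would get this by noting that the defining kernel presentation expresses $Cess^*(BG)$ as the first term of an exact sequence of $R$-modules whose other terms are also free over $R$ (each $H^*(BC_G(E))$ is free over its own Duflot subalgebra, which contains the image of $R$ since $C$ is central hence $C < C_G(E)$), and then a depth chase along that exact sequence gives $\depth_R Cess^*(BG) \geq c(G)$, which with Krull dimension $\leq c(G)$ forces finite generation over $R$.
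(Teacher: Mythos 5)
Your approach is genuinely different from the paper's. The paper's proof of Proposition~\ref{fin dim prim prop} is a one‑line citation of \cite[Thm.8.5]{k5}, which gives the sharper fact that $P_CCess^d(BG)=0$ for $d>d^{\U}(G)$; that theorem is proved by Lannes's $T$--functor methods and a delicate analysis of the Henn--Lannes--Schwartz formulae, and the remark immediately after the proposition explicitly calls its proof ``rather delicate'' and invites a simpler one. What you propose is precisely the commutative‑algebra alternative discussed in that remark and encoded in Corollary~\ref{prim indec fin cor}: show that $Cess^*(BG)$ has Krull dimension $c(G)$ and is finitely generated free over a Duflot algebra $A$, so that $Q_ACess^*(BG)$ is finite dimensional, and then pass to $P_CCess^*(BG)$ via an injection into $Q_ACess^*(BG)$. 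The paper acknowledges this route is viable, but only for finite $G$ and only with a genuine transfer‑theoretic input, so let me flag where your sketch falls short.

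The central gap is the Krull dimension bound, which does \emph{not} follow from Quillen's stratification alone. You are right that every element of $Cess^*(BG)$ restricts to zero on $H^*(BE)$ for every elementary abelian $E\not\subseteq C$ (set $E'=E\cdot C\supsetneq C$ and use $E\leq C_G(E')$), so $Cess^*(BG)$ is a nilpotent ideal. But the Krull dimension of a nilpotent ideal \emph{as a module} is $\dim H^*(BG)/\operatorname{Ann}(Cess^*(BG))$, and that annihilator lives exactly in the nilpotent information that Quillen's $F$--isomorphism discards; ``supported on the $C$--stratum'' is a statement about $\Spec$, not about the annihilator. This is why the paper invokes Carlson's transfer theorem \cite{carlson depth} via \cite[Prop.8.2]{k5} for this step, and notes it is only verified when $G$ is finite -- so even repaired, your argument would not cover compact Lie $G$, for which the proposition is stated. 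Second, ``torsion‑free in a free $A$--module'' only gives $\depth_A \geq 1$, not $\geq c(G)$, and the proposed depth chase along $0\to Cess^*(BG)\to H^*(BG)\to\prod_{C\lvertneqq E}H^*(BC_G(E))$ requires both that the chosen $A$--generators restrict to part of a regular sequence on \emph{each} $H^*(BC_G(E))$ (the content of \cite[Lem.8.8]{k5}, not an automatic consequence of $C<C_G(E)$) and control over the \emph{image} in the product rather than the product itself; freeness of $Cess^*(BG)$ over $A$ is Proposition~\ref{Cess prop}(a), cited from \cite[Prop.8.1]{k5}, and is not a formality. Finally, your ``dictionary fact'' that $P_C$ equals $Q_A$ up to the exterior factor overclaims: the comodule and $A$--module structures are compatible only ``up to filtration,'' and the available statement is the one‑way injection $P_CCess^*(BG)\hookrightarrow Q_ACess^*(BG)$ of Proposition~\ref{Cess prop}(b) -- which, fortunately, is all your argument actually needs.
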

\begin{proof}\cite[Thm.8.5]{k5} implies that if $P_CCess^d(BG) \neq 0$, then $d\leq d^{\U}(G)$.
\end{proof}

\begin{rem}  The careful reader will discover that \cite[Thm.8.5]{k5} has a rather delicate proof, using related results in \cite{k4}, all based on careful analysis of formulae in \cite{hls1}.  It would be nice to have a simpler proof of the proposition.  In the next subsection we will see (\corref{prim indec fin cor}) that $P_CCess^*(BG)$ is finite dimensional if and only if $Cess^*(BG)$ has Krull dimension equal to $c(G)$.  When $G$ is finite, this Krull dimension calculation is verified \cite[Prop.8.2]{k5} using a result of J.Carlson \cite{carlson depth}.
\end{rem}

We let $e_{prim}(G)$ denote $e_{prim}(Cess^*(BG))$.

\begin{cor} \label{d Cess = e prim cor} $d_0(Cess^*(BG)) = e_{prim}(G)$.
\end{cor}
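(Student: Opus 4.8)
The plan is to combine the two immediately preceding results. Corollary~\ref{d Cess = e prim cor} asserts that $d_0(Cess^*(BG)) = e_{prim}(G)$, where by definition $e_{prim}(G) = e_{prim}(Cess^*(BG))$ is the top nonzero degree of $P_C Cess^*(BG)$. The only hypothesis needed to invoke the Lemma preceding Proposition~\ref{fin dim prim prop} — namely, that for an unstable $H^*(BC)$-comodule $M$ with $P_C M$ finite dimensional one has $d_0(M) = e_{prim}(M)$ — is precisely the finite-dimensionality of the module of primitives. So the first step is to observe that $Cess^*(BG)$ is indeed an unstable $H^*(BC)$-comodule: this was recorded in the paragraph introducing the comodule structures, the structure map being induced by the multiplication homomorphism $C \times G \to G$ restricted to $Cess^*(BG) \subset H^*(BG)$, so that $Cess^*(BG)$ inherits a comodule structure in $\U$ as a sub-comodule.

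The second step is to verify the finiteness hypothesis. This is exactly the content of Proposition~\ref{fin dim prim prop}, which states that $P_C Cess^*(BG)$ is finite dimensional. With $M = Cess^*(BG)$, the Lemma then applies verbatim and yields $d_0(Cess^*(BG)) = e_{prim}(Cess^*(BG)) = e_{prim}(G)$, which is the assertion of the corollary. In short: the corollary is the specialization of the Lemma to $M = Cess^*(BG)$, legitimized by Proposition~\ref{fin dim prim prop}.

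There is essentially no obstacle in the proof itself; all the real work has been front-loaded into Proposition~\ref{fin dim prim prop} (whose proof, as the remark following it candidly notes, rests on the delicate \cite[Thm.8.5]{k5}) and into the Lemma. If I wanted to make the proof of the corollary self-contained I would recall the short argument of the Lemma in this special case: one has $e := e_{prim}(G) = d_0(P_C Cess^*(BG))$ since $P_C Cess^*(BG)$ is concentrated in degrees $\leq e$ and nonzero in degree $e$ (using part (a) of the elementary Lemma on $d_0$); the inclusion $P_C Cess^*(BG) \hookrightarrow Cess^*(BG)$ of unstable modules gives $e \leq d_0(Cess^*(BG))$; and the composite
$$ Cess^*(BG) \xra{\Delta} H^*(BC) \otimes Cess^*(BG) \twoheadrightarrow H^*(BC) \otimes Cess^*(BG)^{\leq e} $$
is monic (an element in the kernel would be primitive of degree $> e$), so $d_0(Cess^*(BG)) \leq d_0(H^*(BC) \otimes Cess^*(BG)^{\leq e}) = d_0(Cess^*(BG)^{\leq e}) = e$ by Proposition~\ref{d(M) properties}(a), the elementary Lemma(a), and Lemma~(c) giving $d_0(H^*(BC)) = 0$. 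Combining the two inequalities gives the equality.
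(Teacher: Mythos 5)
Your proposal is correct and is exactly the paper's route: the corollary is the specialization of the preceding Lemma to $M = Cess^*(BG)$, with the finite-dimensionality hypothesis supplied by Proposition~\ref{fin dim prim prop} and the comodule structure coming from $C \times G \to G$ as noted in the text. Your optional unpacking of the Lemma's argument also matches the paper's proof of that Lemma verbatim.
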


\subsection{Duflot algebras}  Let $c = c(G)$, the rank of $C = C(G)$, so that
\begin{equation*}
H^*(BC) \simeq
\begin{cases}
\F_2[x_1,\dots,x_c] & \text{if $p=2$}  \\ \Lambda(x_1,\dots,x_c) \otimes \F_p[y_1,\dots,y_c] & \text{if $p$ is odd}.
\end{cases}
\end{equation*}

The image of the restriction homomorphism $i^*: H^*(BG) \ra H^*(BC)$ will be a sub Hopf algebra of $H^*(BC)$.  After a change of basis for $H^1(BC)$, it will have the form
\begin{equation*}
\im(i^*) =
\begin{cases}
\F_2[x_1^{2^{j_1}},\dots,x_c^{2^{j_c}}] & \text{if $p=2$}  \\

\F_p[y_1^{p^{j_1}},\dots,y_b^{p^{j_b}},y_{b+1},\dots,y_c] \otimes \Lambda(x_{b+1},\dots,x_c) & \text{if $p$ is odd},
\end{cases}
\end{equation*}
with the $j_i$ forming a sequence of nonincreasing nonnegative integers. (See \cite[Rem.1.3]{broto henn} and  \cite{aguade smith}.) In the odd prime case, $c-b$ has group theoretic meaning as the rank of the largest subgroup of $C$ splitting off $G$ as a direct summand.

As in \cite{k5}, we will say that $G$ has {\em type} $[a_1,\dots,a_c]$ where
\begin{equation*}
(a_1,\dots,a_c) =
\begin{cases}
(2^{j_1}, \dots, 2^{j_c}) & \text{if $p=2$}  \\ (2p^{j_1}, \dots, 2p^{j_b},1, \dots,1) & \text{if $p$ is odd}.
\end{cases}
\end{equation*}

Recall that $e(G)$ is defined to be the largest degree of a $H^*(BG)$--module generator of $H^*(BC)$, i.e. the top degree of the finite dimensional Hopf algebra $H^*(BC) \otimes_{H^*(BG)} \F_p$.   Note that this number is determined by the type of $G$:
$$ e(G) = \sum_{i=1}^c (a_i-1).$$

Since $\im(i^*)$ is a {\em free} commutative algebra, one can split the epimorphism of rings $i^*: H^*(BG) \twoheadrightarrow im(i^*)$, and make the next definition.

\begin{defn} A {\em Duflot algebra} of $H^*(BG)$ is a subalgebra $A \subseteq H^*(BG)$, such that $i^*: A \ra \im(i^*)$ is an isomorphism.
\end{defn}

\begin{rem}  It seems unclear that a Duflot algebra can always be chosen to also be closed under  Steenrod operations.  Nor does it seem that it can be always chosen to be a sub--$H^*(BC)$--comodule of $H^*(BG)$.
\end{rem}

\subsection{Indecomposables in central essential cohomology}

For the rest of the section, now also fix a Duflot algebra $A \subseteq H^*(BG)$.

\begin{defns} If $M$ is an $A$--module, we define the $A$--indecomposables to be $Q_AM = M \otimes_{A} \F_p$.  If $Q_AM$ is finite dimensional, we let $e_{indec}(M)$ be its largest nonzero degree, or $-\infty$ if $M = \mathbf 0$.
\end{defns}

Observe that everything in the exact sequence
\begin{equation*}0 \ra Cess^*(BG) \ra H^*(BG) \ra \prod_{C\lvertneqq E} H^*(BC_G(E))
\end{equation*}
is both an $A$--module and a $H^*(BC)$--comodule.  These structures are sufficiently compatible `up to filtration' so that one can prove the following.

\begin{prop} \label{Cess prop}  The following hold. \\

\noindent{\bf (a)}  $Cess^*(BG)$ is a free $A$--module. \\

\noindent{\bf (b)} The composite $P_C Cess^*(BG) \hra Cess^*(BG) \era Q_ACess^*(BG)$ is monic. \\

\noindent{\bf (c)} The sequence $\displaystyle 0 \ra Q_ACess^*(BG) \ra Q_AH^*(BG) \ra \prod_{C\lvertneqq E} Q_AH^*(BC_G(E))$
is exact.
\end{prop}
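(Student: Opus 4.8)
The plan is to establish (a) first, since both (b) and (c) will lean on the freeness of $Cess^*(BG)$ as an $A$-module, and then deduce (b) and (c) as consequences of exactness properties of the functor $Q_A(-) = \F_p \otimes_A (-)$ applied to the defining exact sequence. For part (a), I would proceed by comparing the two structures present on $Cess^*(BG)$: the $A$-module structure and the $H^*(BC)$-comodule structure. The key observation is that $H^*(BC)$ is free as a module over $\im(i^*)$ (both are free graded-commutative algebras, and the subalgebra inclusion $\im(i^*) \hra H^*(BC)$ is a subHopf algebra inclusion of such algebras, which is always faithfully flat — indeed free — by Milnor–Moore type arguments, or one can see it directly from the explicit description of $\im(i^*)$ given via the type $[a_1,\dots,a_c]$). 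Dually, $H^*(BC)$ is a cofree comodule over the quotient Hopf algebra $H^*(BC) \otimes_{\im(i^*)} \F_p$. Now $Cess^*(BG)$ is an $H^*(BC)$-comodule whose comodule structure factors — up to the filtration issues flagged in the text — through the structure coming from restriction to $C$; via the Duflot-algebra splitting $i^*\colon A \xrightarrow{\ \sim\ } \im(i^*)$, the $A$-action and this comodule coaction are "adjoint" to one another in the sense that $Cess^*(BG)$ becomes, after passing to the associated graded of the relevant filtration, a free module over $A \cong \im(i^*)$ with the free generators indexed by a basis of the cofree comodule $H^*(BC)\otimes_{\im(i^*)}\F_p$. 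Since freeness of a filtered module can be detected on the associated graded when the filtration is exhaustive and bounded below in each degree (which it is here, as everything is finite-dimensional in each degree and $A$ is connected), this gives (a). I expect this comparison of filtered structures — making precise the phrase "sufficiently compatible up to filtration" — to be the main obstacle; it is the technical heart of the argument, and the cleanest route is probably to cite or adapt the analogous statement in \cite{k5} rather than reprove it from scratch.

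For part (b), observe that $P_C Cess^*(BG)$ is, by definition, the equalizer of the two maps $Cess^*(BG) \rightrightarrows H^*(BC)\otimes Cess^*(BG)$, i.e. the comodule primitives. The associated graded identification from (a) realizes $Cess^*(BG)$, up to filtration, as $A \otimes (H^*(BC)\otimes_{\im(i^*)}\F_p)$ as an $A$-module, with the comodule coaction corresponding (under $A\cong \im(i^*)$) to comultiplication in the second tensor factor twisted by the $\im(i^*)$-coaction. Under this identification the primitives $P_C$ land in $\F_p \otimes (H^*(BC)\otimes_{\im(i^*)}\F_p)$ — the "bottom" $A$-summand — precisely because a primitive element must have trivial coaction, forcing its $A$-filtration to be minimal. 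But $\F_p\otimes(H^*(BC)\otimes_{\im(i^*)}\F_p)$ is exactly the image of $Q_A$ on that bottom piece, so the composite $P_C Cess^*(BG)\hra Cess^*(BG)\era Q_A Cess^*(BG)$ is injective on associated graded, hence injective. Here again freeness from (a) is what guarantees $Q_A$ does not collapse anything below the top filtration.

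Finally, for part (c), apply the right-exact functor $Q_A(-)$ to the short exact sequence of $A$-modules
$$0 \ra Cess^*(BG) \ra H^*(BG) \ra \prod_{C\lvertneqq E} H^*(BC_G(E)),$$
noting first that $\prod_{C\lvertneqq E}H^*(BC_G(E))$ should be replaced by the image of $H^*(BG)$ in it, so we genuinely have a short exact sequence $0\ra Cess^*(BG)\ra H^*(BG)\ra Q\ra 0$ with $Q$ that image. Right exactness of $Q_A$ gives exactness of $Q_A H^*(BG) \ra Q_A Q \ra 0$, and $\mathrm{Tor}_1^A$ of the end term controls the kernel on the left: the sequence $Q_A Cess^*(BG)\ra Q_A H^*(BG)\ra Q_A Q\ra 0$ is always exact, and it extends to a short exact sequence $0\ra Q_A Cess^*(BG)\ra Q_A H^*(BG)\ra Q_A Q \ra 0$ as soon as $\mathrm{Tor}_1^A(\F_p, Q)=0$. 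Since $Cess^*(BG)$ is $A$-free by part (a), the long exact $\mathrm{Tor}$ sequence for $0\ra Cess^*\ra H^*(BG)\ra Q\ra 0$ gives $\mathrm{Tor}_1^A(\F_p,Q)\hookrightarrow \mathrm{Tor}_0^A(\F_p, Cess^*) = Q_A Cess^*(BG)$ with the precise statement that the connecting map $\mathrm{Tor}_1^A(\F_p,Q)\ra Q_A Cess^*(BG)$ is injective — and its composite into $Q_A H^*(BG)$ is zero, so it equals the kernel we want to show vanishes. Thus exactness of the displayed three-term sequence in (c) is equivalent to the injectivity of $Q_A Cess^*(BG)\ra Q_A H^*(BG)$, and one checks this last map is injective directly from part (a): a free $A$-submodule that is a direct summand up to filtration (as $Cess^*(BG)$ is inside $H^*(BG)$) stays injective after $\otimes_A \F_p$. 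Reinstating the product $\prod_{C\lvertneqq E}H^*(BC_G(E))$ in place of $Q$ only enlarges the target of the last map and so preserves exactness at $Q_A Cess^*(BG)$ and at $Q_A H^*(BG)$, which is all that is asserted.
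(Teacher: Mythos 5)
Your high-level picture is the right one: the interplay between the $A$-module and $H^*(BC)$-comodule structures, made precise via a filtration (the Duflot filtration), is indeed the engine behind all three parts, and the paper itself simply points to \cite[Prop.~8.1]{k5} for this. But your proposal explicitly defers the technical heart of (a) to that same reference ("the cleanest route is probably to cite or adapt the analogous statement in \cite{k5}"), so you have not actually produced a proof of the statement where its content lies.

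The genuine gap is in part (c), at the very last step. You reduce exactness to the injectivity of $Q_A Cess^*(BG) \to Q_A H^*(BG)$ and then close with: "one checks this last map is injective directly from part (a): a free $A$-submodule that is a direct summand up to filtration$\ldots$stays injective after $\otimes_A \F_p$." This does not follow from (a). Freeness of $Cess^*(BG)$ over $A$ is far from sufficient: take $A = \F_p[x]$ and the free $A$-submodule $xA \subset A$; the induced map on $Q_A$ is zero. What you actually need is precisely the parenthetical assertion "direct summand up to filtration," but that phrase is never defined, never established, and is exactly the Duflot-filtration compatibility that you flagged earlier as "the main obstacle" and deferred. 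So the crux of (c) is asserted, not proved. A secondary issue: your claim that the connecting map $\mathrm{Tor}_1^A(\F_p,Q) \to Q_A Cess^*(BG)$ is injective tacitly uses $\mathrm{Tor}_1^A(\F_p, H^*(BG)) = 0$, i.e.\ freeness of $H^*(BG)$ itself over $A$ (a form of Duflot's theorem), which is not part (a) and is nowhere invoked. To repair the argument you would need to set up the Duflot filtration honestly, show that $Cess^*(BG)$, $H^*(BG)$, and each $H^*(BC_G(E))$ carry compatible filtrations whose associated graded objects are free over $\mathrm{gr}\,A$ with the sequence splitting on associated graded, and then transfer exactness back — this is the content your proposal routes around.
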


See \cite[Prop.8.1]{k5}.

\begin{cor} \label{prim indec fin cor} $Q_ACess^*(BG)$ is finite dimensional if and only if $P_CCess^*(BG)$ is finite dimensional.  In this case, $e_{prim}(Cess^*(BG)) \leq e_{indec}(Cess^*(BG))$.
\end{cor}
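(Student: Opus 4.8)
The plan is to deduce everything from \propref{Cess prop}, treating the "if and only if" as two inequalities between the dimensions of $P_CCess^*(BG)$ and $Q_ACess^*(BG)$ (in each degree, or in total). First I would use part (a): since $Cess^*(BG)$ is a free $A$--module, a choice of homogeneous $A$--basis identifies $Cess^*(BG)$ with $A \otimes Q_ACess^*(BG)$ as graded $A$--modules, where $A$ is a polynomial (or polynomial $\otimes$ exterior) algebra whose generators are in positive degrees — indeed $A \cong \im(i^*)$, which is a free commutative algebra on generators in positive degrees. Consequently $Cess^*(BG)$ is finite dimensional if and only if $Q_ACess^*(BG) = 0$, and more usefully, for each degree $n$ the dimension of $Cess^n(BG)$ is a sum over $d \le n$ of $\dim A^{n-d} \cdot \dim Q_ACess^d(BG)$. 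Hence $Q_ACess^*(BG)$ is finite dimensional if and only if $Cess^d(BG) = 0$ for all $d \gg 0$ if and only if $Cess^*(BG)$ itself is finite dimensional.

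Next I would handle $P_CCess^*(BG)$ the same way, via part (b): the composite $P_CCess^*(BG) \hookrightarrow Cess^*(BG) \twoheadrightarrow Q_ACess^*(BG)$ is monic, so in each degree $\dim P_CCess^d(BG) \le \dim Q_ACess^d(BG)$; in particular if $Q_ACess^*(BG)$ is finite dimensional then so is $P_CCess^*(BG)$, and moreover the top nonzero degree of $P_CCess^*(BG)$ is at most the top nonzero degree of $Q_ACess^*(BG)$, which is precisely the asserted inequality $e_{prim}(Cess^*(BG)) \le e_{indec}(Cess^*(BG))$. For the reverse implication of the biconditional I need: if $P_CCess^*(BG)$ is finite dimensional, then so is $Q_ACess^*(BG)$. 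Here I would argue contrapositively. Suppose $Cess^d(BG) \ne 0$ for arbitrarily large $d$ (equivalently $Q_ACess^*(BG)$ is infinite dimensional, by the previous paragraph). I claim $P_CCess^*(BG)$ is then also infinite dimensional. To see this, recall $Cess^*(BG)$ is a nonzero $H^*(BC)$--comodule that is finitely generated as a module over the Noetherian ring $H^*(BG)$, hence Noetherian; its comodule structure dualizes to an action of $H^*(BC)$ making it a module over a polynomial-type algebra, and the primitives $P_CCess^*(BG)$ are the "bottom" of this filtration. A standard argument with the cofree/coinduced comodule structure — or equivalently, observing that $H^*(BC) \otimes P_CCess^*(BG)$ injects cofinally into $Cess^*(BG)$ in the sense dual to the freeness over $A$ — shows that if $P_CCess^*(BG)$ were finite dimensional then $Cess^*(BG)$ would have Krull dimension $< \infty$ bounded degrees only if $P_C$ is trivial, contradicting $Cess^*(BG)$ being infinite dimensional. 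More cleanly: by the lemma preceding \propref{fin dim prim prop}, if $P_CCess^*(BG)$ is finite dimensional then $d_0(Cess^*(BG)) = e_{prim}(Cess^*(BG)) < \infty$, and combined with part (a) — freeness over the positively-graded algebra $A$ — one forces $Q_ACess^*(BG)$ to be concentrated in degrees $\le d_0(Cess^*(BG))$, hence finite dimensional.

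The main obstacle I anticipate is the reverse direction "$P_C$ finite dimensional $\implies$ $Q_A$ finite dimensional," since parts (a) and (b) of \propref{Cess prop} only directly give the forward direction and the degree inequality. The cleanest route is probably to note that $Cess^*(BG)$, being a free $A$--module, is finite dimensional if and only if $Q_ACess^*(BG)$ is; and being a Noetherian $H^*(BC)$--comodule with finite-dimensional primitives forces it to be finite dimensional (a comodule over $H^*(BC)$ that is "locally conilpotent" with finite-dimensional primitives, and Noetherian, must be finite dimensional — this is the coalgebra-dual of the statement that a Noetherian module over $\F_p[x]$ on which $x$ acts locally nilpotently is finite dimensional). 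Spelling out that dualization carefully, and confirming that the $H^*(BG)$--Noetherian hypothesis transfers appropriately to the $H^*(BC)$--comodule side, is the delicate point; everything else is bookkeeping with graded dimensions using the freeness in part (a) and the injection in part (b).
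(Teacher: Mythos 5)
Your forward direction and the inequality $e_{prim} \leq e_{indec}$ are correct and match the paper: Proposition~\ref{Cess prop}(b) gives the degree-by-degree injection $P_CCess^*(BG) \hookrightarrow Q_ACess^*(BG)$, from which both claims follow immediately.

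The reverse implication is where the proposal runs into trouble. First, your opening paragraph asserts that ``$Q_ACess^*(BG)$ is finite dimensional if and only if $Cess^d(BG) = 0$ for all $d \gg 0$ if and only if $Cess^*(BG)$ itself is finite dimensional.'' This is false: if $Q_ACess^*(BG)$ is nonzero but finite dimensional, then $Cess^*(BG) \cong A \otimes Q_ACess^*(BG)$ as graded vector spaces contains shifted copies of the infinite dimensional algebra $A$, so it is unbounded in degree. (Your earlier observation that $Cess^*(BG)$ is finite dimensional iff $Q_ACess^*(BG)=0$ is the correct version.) The ``contrapositive'' setup built on this equivalence therefore does not get off the ground. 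Your ``more cleanly'' route also has a gap: you want to conclude that $d_0(Cess^*(BG)) < \infty$ plus freeness over the positively graded $A$ forces $Q_ACess^*(BG)$ into degrees $\leq d_0(Cess^*(BG))$. For that you would need a degree-$d$ $A$-basis element of $Cess^*(BG)$ to generate an $\A_p$-submodule isomorphic to $\Sigma^d A$ (or at least some nonzero $d$-fold suspension), but the decomposition $Cess^*(BG) \cong A \otimes Q_ACess^*(BG)$ is only one of $A$-modules, and the paper explicitly remarks that a Duflot algebra $A$ need not be closed under Steenrod operations nor be a sub-$H^*(BC)$-comodule. So neither $A$ nor the $A$-module splitting is available on the $\A_p$-module side, and this step does not follow.

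The paper's actual argument avoids both issues and is simpler than your attempted workarounds: from the Lemma you cite, $P_CM$ finite dimensional with top degree $e = e_{prim}(M)$ gives a monomorphism of $A$-modules
\begin{equation*}
M \xra{\Delta} H^*(BC) \otimes M \twoheadrightarrow H^*(BC) \otimes M^{\leq e}.
\end{equation*}
The target is a finitely generated $A$-module (since $H^*(BC)$ is finitely generated over $A \cong \im(i^*)$, and $M^{\leq e}$ is finite dimensional), and $A$ is Noetherian, so $M$ is itself finitely generated over $A$, whence $Q_AM$ is finite dimensional. You are genuinely close to this --- you cite the right Lemma --- but you reach for a $d_0$-theoretic bound on $Q_A$ that is blocked by the failure of $A$ to be an $\A_p$-submodule, whereas a straight Noetherian argument closes the loop.
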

\begin{proof}  For notational simplicity, let $M = Cess^*(BG)$.  The proposition immediately implies that if $Q_AM$ is finite dimensional so is $P_CM$, and the stated inequality will hold. Conversely, suppose $P_CM$ is finite dimensional. Recall that the composite (of $A$--modules)
$$ M \xra{\Delta} H^*(BC) \otimes M \ra H^*(BC) \otimes M^{\leq e_{prim}(M)}$$
is monic.  As $H^*(BC) \otimes M^{\leq e_{prim}(M)}$ is certainly a finitely generated $A$--module, so is $M$.
\end{proof}

We let $e_{indec}(G)$ denote $e_{indec}(Cess^*(BG))$.

\begin{cor} \label{prim indec cor} $Cess^*(BG)$ is a finitely generated free $A$--module, and $e_{prim}(G) \leq e_{indec}(G)$.
\end{cor}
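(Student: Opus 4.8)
The plan is to assemble this corollary directly from the three preceding results, with no new work required of us: \propref{fin dim prim prop}, \corref{prim indec fin cor}, and part (a) of \propref{Cess prop}. Write $M = Cess^*(BG)$ for brevity.

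First I would invoke \propref{fin dim prim prop}, which says that $P_C M$ is finite dimensional. This is the single genuinely hard input, but it is already in hand. Next I would feed this into the ``converse'' half of \corref{prim indec fin cor}, which accomplishes two things simultaneously: it shows that $Q_A M$ is finite dimensional --- equivalently, via the monomorphism of $A$--modules $M \hra H^*(BC) \otimes M^{\leq e_{prim}(M)}$ whose target is a finitely generated $A$--module, that $M$ is itself a finitely generated $A$--module --- and it yields the inequality $e_{prim}(M) \leq e_{indec}(M)$, i.e.\ $e_{prim}(G) \leq e_{indec}(G)$, which is the second assertion of the corollary.

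Finally I would combine this with \propref{Cess prop}(a), which says $M$ is a free $A$--module. A free module over the graded connected $\F_p$--algebra $A$ whose indecomposables $Q_A M = M \otimes_A \F_p$ form a finite dimensional space is automatically finitely generated free: lift a homogeneous $\F_p$--basis of $Q_A M$ to finitely many homogeneous elements of $M$, and observe that by freeness these elements constitute an $A$--basis. Hence $M = Cess^*(BG)$ is a finitely generated free $A$--module, as claimed.

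There is no real obstacle at this stage: all the difficulty has been quarantined in \propref{fin dim prim prop} (which in turn rests on \cite[Thm.8.5]{k5}) and in the ``compatibility up to filtration'' behind \propref{Cess prop}. The only point needing a moment's care is the passage from ``free $A$--module that is finitely generated as an $A$--module'' (equivalently, ``free with finite dimensional indecomposables'') to ``finitely generated \emph{free} $A$--module'', which is the standard graded Nakayama observation over the connected algebra $A$ recorded above.
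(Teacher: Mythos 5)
Your proposal is correct and matches the paper's intended argument: the corollary is stated without proof precisely because it follows by assembling \propref{fin dim prim prop}, \corref{prim indec fin cor}, and \propref{Cess prop}(a) exactly as you do, with the graded Nakayama step supplying ``finitely generated free.'' Nothing to add.
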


\begin{rem} As we observed computationally in \cite[Appendix A]{k5}, $e_{prim}(G) = e_{indec}(G)$ for all finite 2--groups $G$ of order dividing 32.  We suspect that this pattern will not continue, but it would be nice to have an explicit example for which the inequality of the corollary is strict.
\end{rem}

\subsection{Local cohomology and Symond's theorem}  The last step in our proof of \thmref{d(G) thm} is the verification of the next bound.

\begin{thm}  \label{indec < e thm} For all $G$, $e_{indec}(G) \leq e(G) - \dim G$.  The inequality is strict unless $G$ is $p$--central.  If $G$ is $p$--central, then $d_{\U}(G) = e_{prim}(G) = e_{indec}(G) = e(G) - \dim G$.
\end{thm}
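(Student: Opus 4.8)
The plan is to deduce the inequality $e_{indec}(G) \leq e(G) - \dim G$ from a careful analysis of $Q_A Cess^*(BG)$ as a quotient of $Q_A H^*(BG)$, combined with Symonds' theorem on the vanishing of local cohomology. By \corref{prim indec cor} we already know $Cess^*(BG)$ is a finitely generated free $A$--module, where $A$ is the chosen Duflot algebra, so $Q_A Cess^*(BG)$ is finite dimensional and $e_{indec}(G)$ is well defined. The key point is that $Cess^*(BG)$, being free over $A$ and concentrated (via its comodule structure) in a way compatible with the $H^*(BC)$--action, has its $A$--indecomposables controlled by local cohomology of $H^*(BG)$ with respect to the maximal ideal $\mathfrak m$. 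Indeed I would first establish the refinement stated later as \thmref{e indec loc coh thm}:
$$ e_{indec}(G) = e(G) + \max\{ e \mid H^{c(G),-c(G)+e}_{\mathfrak m}(H^*(BG)) \neq 0\},$$
which identifies the ``defect'' $e(G) - e_{indec}(G)$ with the top internal degree in which the local cohomology module $H^{c}_{\mathfrak m}(H^*(BG))$ is nonzero (here $c = c(G)$, which by Duflot's theorem is at most, and here relevant precisely when equal to, the depth).

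The second main step is to invoke Symonds' theorem: $H^{s,t}_{\mathfrak m}(H^*(BG)) = 0$ whenever $s + t > -\dim G$. Applying this with $s = c(G)$ and $t = -c(G) + e$ forces $e \leq -\dim G$ whenever the corresponding local cohomology group is nonzero, and hence
$$ e_{indec}(G) \leq e(G) + (-\dim G) = e(G) - \dim G,$$
which is the desired bound. For the strictness claim, observe that $Cess^*(BG) = H^*(BG)$ precisely when $G$ is $p$--central, and otherwise $Cess^*(BG)$ is a proper submodule supported away from the ``central'' component; one then argues that the top local cohomology in degree $-\dim G$ comes from a class (a Duflot regular sequence class, or the relevant top-dimensional contribution from the maximal torus) that does \emph{not} survive into $Cess^*$, so the maximum over $e$ with $H^{c,-c+e}_{\mathfrak m} \neq 0$ is strictly less than $-\dim G$ in the non-$p$--central case. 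Concretely, in the $p$--central case $c(G) = r(G)$, $Cess^*(BG) = H^*(BG)$, the ring is Cohen--Macaulay of dimension $c(G)$, and the single local cohomology module $H^{c(G)}_{\mathfrak m}(H^*(BG))$ realizes its top nonzero internal degree exactly at $-c(G) + (e(G) - \dim G)$ by a duality/Poincar\'e-series computation for the free module $H^*(BC)$ over $\im(i^*)$; hence all four quantities $d^{\U}(G), e_{prim}(G), e_{indec}(G), e(G) - \dim G$ coincide, using \corref{d Cess = e prim cor} and \corref{prim indec cor} to collapse the chain of inequalities.

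The step I expect to be the main obstacle is the first one: proving \thmref{e indec loc coh thm}, i.e.\ translating $e_{indec}(Cess^*(BG))$ into the language of local cohomology of $H^*(BG)$ itself. The difficulty is that $A$ is only a Duflot \emph{algebra} — a vector-space splitting of $i^* \colon H^*(BG) \era \im(i^*)$ — and need not be closed under Steenrod operations nor a subcomodule (as flagged in the remark after the definition of Duflot algebra), so one cannot simply identify $A$ with a polynomial subalgebra over which $H^*(BG)$ is free in a manifestly $\A_p$-equivariant way. One has to exploit \propref{Cess prop}: that $Cess^*(BG)$ is a free $A$-module and that the relevant sequences are exact ``up to filtration,'' then match the Hilbert series of $Q_A Cess^*(BG)$ against the local cohomology of $H^*(BG)$ using the Grothendieck–Serre / local-duality relation between the Hilbert series of a finitely generated graded module and the alternating sum of Hilbert series of its local cohomology, specialized to the depth-$c(G)$ situation where only $H^{c(G)}_{\mathfrak m}$ contributes in the relevant range. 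Once that bookkeeping is done, the Symonds input and the $p$-central endgame are comparatively formal.
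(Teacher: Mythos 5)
Your overall strategy—prove \thmref{e indec loc coh thm} first, then feed in Symonds' vanishing—matches the paper's, and your sketch of why \thmref{e indec loc coh thm} should hold (identifying $Q_A$ via the depth-$c(G)$ local cohomology after killing a Duflot regular sequence) is in the right spirit, though the paper gets there by shifting local cohomology along the regular sequence (\propref{shift prop}) and identifying $Q_A Cess^*(BG)$ with the $\mathfrak m$-torsion in $Q_A H^*(BG)$ (\propref{QA prop}), rather than by a Hilbert-series/Grothendieck--Serre matching. That difference is cosmetic.

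Where your proposal has a genuine gap is in the strictness claim and the $p$-central equalities, which you dismiss as "comparatively formal." They are not. Once you have \thmref{e indec loc coh thm}, the inequality $e_{indec}(G) \leq e(G) - \dim G$ is strict if and only if $H^{c(G),-c(G)-\dim G}_{\mathfrak m}(H^*(BG)) = 0$, and your heuristic about a class "not surviving into $Cess^*$" does not engage with this: the local cohomology in \thmref{e indec loc coh thm} is that of $H^*(BG)$, not of $Cess^*(BG)$. The paper's argument invokes the Benson--Greenlees spectral sequence $H^{s,t}_{\mathfrak m}(H^*(BG)) \Rightarrow \tilde H_{-s-t}(EG_+ \sm_G S^{Ad(G)};\F_p)$, observes (by Symonds' theorem) that $H^{c,-c-\dim G}_{\mathfrak m}$ consists of permanent cycles and hence is a quotient of the abutment in total degree $-\dim G$, which is $\F_p$ or $0$ depending on whether $Ad(G)$ is $\F_p$-oriented, and combines this with Benson's computation that in the oriented case $H^{r(G),-r(G)-\dim G}_{\mathfrak m} \simeq \F_p$. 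Since only one filtration quotient of the abutment can be nonzero, $H^{c,-c-\dim G}_{\mathfrak m}$ vanishes unless $c(G) = r(G)$. Your proposed substitute—a "duality/Poincar\'e-series computation for the free module $H^*(BC)$ over $\im(i^*)$"—confuses two different quotients: $e(G)$ is the top degree of $H^*(BC) \otimes_{\im(i^*)} \F_p$, while $e_{indec}(G)$ is the top degree of $H^*(BG) \otimes_A \F_p$, and no elementary Poincar\'e-series argument identifies the latter with $e(G) - \dim G$ in the Cohen--Macaulay case without input equivalent to Benson--Greenlees. Finally, even after $e_{indec}(G) = e(G) - \dim G$ is established, the equality $e_{prim}(G) = e_{indec}(G)$ (needed to close the chain with $d^{\U}(G)$) requires a separate argument, supplied in the paper by showing the top class of $Q_A H^*(BG)$ is represented by an $H^*(BC)$-primitive; your proposal omits this.
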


We first note that, even when $p$ is odd, it suffices to prove this when the Duflot algebra $A$ is a polynomial algebra, i.e.~ when $G$ has no $\Z/p$ direct summands, as $d_{\U}(G \times E) = d_{\U}(G)$, $e_{prim}(G \times E) = e_{prim}(G)$, $e_{indec}(G \times E) = e_{indec}(G)$, and $e(G \times E) = e(G)$.

We need to begin with a quick summary of definitions and properties of local cohomology.  A general reference for this is \cite{brodmann sharp}.

Let $\mathfrak m$ be a maximal ideal in a graded Noetherian ring $R$.  For $M$ an $R$--module,
$$ M \mapsto H_{\mathfrak m}^{s,*}(M)$$
is defined to be the $s$th right derived functor of
$$ M \mapsto H^{0,*}_{\mathfrak m}(M) = \text{ the ${\mathfrak m}$--torsion part of $M$}.$$

\begin{prop}
$H_{\mathfrak m}^{s,*}(M) \neq 0$ only if $\text{depth}_{\mathfrak m}M \leq s \leq \dim M$.  Furthermore,
If $s = \text{depth}_{\mathfrak m}M$ or $s = \dim M$, then $H_{\mathfrak m}^{s,*}(M) \neq 0$.
\end{prop}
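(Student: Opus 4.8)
The plan is to recognize the statement as the amalgam of three classical facts about the local cohomology of a finitely generated module $M$ over the graded Noetherian ring $R$ at its maximal graded ideal $\mathfrak m$: Grothendieck's vanishing theorem ($H^{s,*}_{\mathfrak m}(M)=0$ for $s>\dim M$), Grothendieck's non-vanishing theorem ($H^{\dim M,*}_{\mathfrak m}(M)\neq 0$), and the identification of $\depth_{\mathfrak m}M$ with the least $s$ for which $H^{s,*}_{\mathfrak m}(M)\neq 0$. All of this is in \cite{brodmann sharp}, and in a write-up I would cite it; for completeness I would indicate the arguments. The starting point is to write $\mathfrak m=\sqrt{(x_1,\dots,x_n)}$ with the $x_i$ homogeneous, so that $H^{s,*}_{\mathfrak m}(M)$ is the $s$th cohomology of the stable Koszul (\v Cech) complex on $x_1,\dots,x_n$. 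This at once gives $H^{s,*}_{\mathfrak m}(M)=0$ for $s<0$ and $s>n$, and shows every $H^{s,*}_{\mathfrak m}(M)$ is $\mathfrak m$-power torsion. The last observation is the lever used throughout: a locally nilpotent endomorphism of a nonzero module is never injective.

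For the depth statements I would induct on $t=\depth_{\mathfrak m}M$. If $t=0$, every homogeneous element of $\mathfrak m$ is a zerodivisor on $M$, so $\mathfrak m$ lies in the union of the finitely many (graded) associated primes of $M$, hence equals one of them; thus the $\mathfrak m$-torsion submodule $H^{0,*}_{\mathfrak m}(M)$ is nonzero, and there is nothing to prove below degree $0$. If $t>0$, choose a homogeneous $M$-regular $y\in\mathfrak m$, so $\depth_{\mathfrak m}(M/yM)=t-1$; the long exact sequence of $0\ra M\xra{y}M\ra M/yM\ra 0$ (I suppress the degree shift), combined with local nilpotence of $y$, shows for $s<t$ that $y$ acts injectively on $H^{s,*}_{\mathfrak m}(M)$ (since $H^{s-1,*}_{\mathfrak m}(M/yM)=0$ by induction), hence $H^{s,*}_{\mathfrak m}(M)=0$; and for $s=t$, since $H^{t-1,*}_{\mathfrak m}(M)$ now vanishes, the sequence produces an injection $H^{t-1,*}_{\mathfrak m}(M/yM)\hra H^{t,*}_{\mathfrak m}(M)$ whose source is nonzero by induction.

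For the dimension bound write $d=\dim M$. The vanishing $H^{s,*}_{\mathfrak m}(M)=0$ for $s>d$ I would prove by induction on $d$: a prime filtration of $M$ together with the long exact sequences reduces it to $M=R/\mathfrak p$ with $\dim R/\mathfrak p\le d$; the case $\mathfrak p=\mathfrak m$ is immediate (the \v Cech complex is concentrated in degree $0$), and otherwise a homogeneous $y\in\mathfrak m\setminus\mathfrak p$ is regular on the domain $R/\mathfrak p$ with $\dim (R/\mathfrak p)/(y)\le d-1$, so the long exact sequence plus local nilpotence of $y$ propagates the vanishing upward.

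The non-vanishing $H^{d,*}_{\mathfrak m}(M)\neq 0$ — Grothendieck's non-vanishing theorem — is the step I expect to be the real obstacle. Here I would run the filtration argument once more, now also using the vanishing just established to see that the relevant connecting maps are surjective, reducing to $M=R/\mathfrak p$ a graded domain with $\dim R/\mathfrak p=d$; then, using that in the paper's setting $R$ is a finitely generated graded $\F_p$-algebra, Noether normalization realizes $R/\mathfrak p$ as a finite module over a graded polynomial subalgebra $B=\F_p[y_1,\dots,y_d]$, and the independence theorem for local cohomology reduces the claim to showing $H^{d,*}_{(y_1,\dots,y_d)}(N)\neq 0$ for a finitely generated $B$-module $N$ with $\dim_B N=d$. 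Over the polynomial ring $B$ the top local cohomology is right exact, so $H^{d,*}_{(y_1,\dots,y_d)}(N)=H^{d,*}_{(y_1,\dots,y_d)}(B)\otimes_B N$; since $H^{d,*}_{(y_1,\dots,y_d)}(B)$ is the graded injective hull of $\F_p$, graded Matlis duality converts $H^{d,*}_{(y_1,\dots,y_d)}(B)\otimes_B N\neq 0$ into $\Hom_B(N,B)\neq 0$, which holds because $\dim_B N=\dim B$ forces $N$ not to be a torsion $B$-module. The formal parts — the \v Cech description, local nilpotence, and the filtration bookkeeping — are routine; the genuine content is this last non-vanishing, which is precisely why it relies on finite generation of $R$ over a field (equivalently, on the existence of a dualizing complex).
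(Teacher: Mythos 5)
The paper's own ``proof'' is a one-line citation to \cite[Cor.~6.2.8]{brodmann sharp}, so you have gone well beyond it by supplying an actual argument. Your identification of the three classical ingredients is exactly right, and the mechanism you use throughout — local nilpotence of $y\in\mathfrak m$ on $H^{s,*}_{\mathfrak m}$, so that an injective multiplication map forces vanishing — is the standard and correct lever; the depth induction and the vanishing above $\dim M$ are both clean. For a reader of the paper the citation is the right level of detail, but your version has the virtue of being adapted to the graded, finitely-generated-over-a-field setting, where Noether normalization and graded Matlis duality replace the completion and dualizing-complex machinery that a fully general treatment (such as Brodmann--Sharp's) has to invoke for Grothendieck non-vanishing over an arbitrary Noetherian local ring.

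One wrinkle worth tightening in the non-vanishing step: ``the relevant connecting maps are surjective, reducing to $M=R/\mathfrak p$'' is a bit too quick, since from the long exact sequence one gets $H^d_{\mathfrak m}(M)\twoheadrightarrow H^d_{\mathfrak m}(M/N)$, which requires a \emph{quotient} of $M$ of dimension $d$, not a submodule, and a prime filtration cannot in general be reordered to put a coheight-$d$ prime at the top. The fix is what you in fact use in the next sentence: pick $\mathfrak p$ a minimal prime of $\operatorname{Supp}M$ with $\dim R/\mathfrak p=d$, note that $M\twoheadrightarrow M/\mathfrak p M$ is a nonzero module over the domain $R'=R/\mathfrak p$ of dimension exactly $d$, and then run Noether normalization $B=\F_p[y_1,\dots,y_d]\hra R'$, the independence theorem, the right-exactness of the top local cohomology over $B$, and graded Matlis duality on this quotient. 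With that phrasing the reduction lands precisely on ``$H^{d}_{(y_1,\dots,y_d)}(N)\neq 0$ for a finitely generated $B$-module $N$ with $\dim_B N=d$,'' which you then dispatch correctly.
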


This is the content of \cite[Cor.6.2.8]{brodmann sharp}.

We need some related results about how local cohomology interacts with regular $M$--sequences. Let $|z|$ denote the degree of $z \in R$.

\begin{lem} Fix $(s,t)$, and suppose that $H_{\mathfrak m}^{s^{\prime},t^{\prime}}(M) = 0$ for $s^{\prime} < s$ and for $(s,t^{\prime})$ with $t^{\prime}>t$.  If $z \in R$ is an $M$--regular element, then $H_{\mathfrak m}^{s^{\prime},t^{\prime}}(M/(z)) = 0$ for $s^{\prime} < s-1$ and for $(s-1,t^{\prime})$ with $t^{\prime}>t+|z|$, and, furthermore
$$ H_{\mathfrak m}^{s-1,t+|z|}(M/(z)) \simeq H_{\mathfrak m}^{s,t}(M).$$
\end{lem}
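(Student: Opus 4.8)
The plan is to extract the claim from the long exact sequence in local cohomology associated to multiplication by $z$. Since $z$ is an $M$-regular element of positive degree $|z|$, there is a short exact sequence of graded $R$-modules
$$ 0 \ra M \xra{\cdot z} M \ra M/(z) \ra 0, $$
where the first map raises degree by $|z|$. Applying the functors $H^{*,*}_{\mathfrak m}(-)$ gives a long exact sequence
$$ \cdots \ra H^{s'-1,t'}_{\mathfrak m}(M/(z)) \ra H^{s',t'-|z|}_{\mathfrak m}(M) \xra{\cdot z} H^{s',t'}_{\mathfrak m}(M) \ra H^{s',t'}_{\mathfrak m}(M/(z)) \ra H^{s'+1,t'-|z|}_{\mathfrak m}(M) \ra \cdots $$
(one must be slightly careful tracking the internal-degree shift coming from $|z|$; this is routine bookkeeping). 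The first step is simply to write this sequence down correctly, with the degree shifts in the right places.

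Next I would feed in the hypothesis. We are told $H^{s',t'}_{\mathfrak m}(M)=0$ whenever $s'<s$, and also whenever $s'=s$ and $t'>t$. To get the vanishing of $H^{s'-1,t'}_{\mathfrak m}(M/(z))$ for $s'-1<s-1$ — equivalently $s'<s$ — I would use the segment $H^{s'-1,t'}_{\mathfrak m}(M/(z))$ sandwiched between $H^{s',t'-|z|}_{\mathfrak m}(M)$ (on one side) and, after reindexing, a cohomology of $M$ in homological degree $<s$ (on the other side); both vanish by hypothesis, so the middle term vanishes. For the boundary case: to kill $H^{s-1,t'}_{\mathfrak m}(M/(z))$ when $t'>t+|z|$, consider the exact piece $H^{s,t'-|z|}_{\mathfrak m}(M) \ra H^{s,t'}_{\mathfrak m}(M) \ra H^{s,t'}_{\mathfrak m}(M/(z)) \ra \cdots$; here both flanking groups $H^{s,t'-|z|}_{\mathfrak m}(M)$ and $H^{s,t'}_{\mathfrak m}(M)$ vanish since $t'>t+|z|$ forces $t'-|z|>t$ and a fortiori $t'>t$, so again the middle term dies. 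Finally, for the isomorphism $H^{s-1,t+|z|}_{\mathfrak m}(M/(z)) \simeq H^{s,t}_{\mathfrak m}(M)$, I would look at the five-term exact segment
$$ H^{s-1,t}_{\mathfrak m}(M) \ra H^{s-1,t+|z|}_{\mathfrak m}(M/(z)) \ra H^{s,t}_{\mathfrak m}(M) \xra{\cdot z} H^{s,t+|z|}_{\mathfrak m}(M). $$
The leftmost term vanishes because $s-1<s$; the rightmost term vanishes because $t+|z|>t$. Hence the connecting map in the middle is an isomorphism.

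The only genuine subtlety — and the step I expect to be the main obstacle — is getting all the internal degree shifts consistent: one must decide on a grading convention so that $\cdot z : H^{s',t'-|z|}_{\mathfrak m}(M) \to H^{s',t'}_{\mathfrak m}(M)$ (rather than the other way), and then verify that the connecting homomorphism $\delta : H^{s-1,*}_{\mathfrak m}(M/(z)) \to H^{s,*}_{\mathfrak m}(M)$ preserves the internal degree after the $|z|$-shift induced by $\cdot z$ in the short exact sequence. Once the indexing is pinned down, each of the three assertions is an immediate diagram chase using exactness together with the two prescribed vanishing regions for $H^{*,*}_{\mathfrak m}(M)$; no deeper input about local cohomology is needed beyond the existence and functoriality of the long exact sequence, which is standard (cf.~\cite{brodmann sharp}).
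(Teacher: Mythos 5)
Your approach is exactly the paper's: apply local cohomology to the short exact sequence $0 \ra \Sigma^{|z|}M \xra{z} M \ra M/(z) \ra 0$ and chase the resulting long exact sequence, and your indexing of that sequence and your arguments for the first vanishing statement and for the isomorphism are correct. However, the middle step --- killing $H^{s-1,t'}_{\mathfrak m}(M/(z))$ for $t'>t+|z|$ --- is mis-executed as written: the exact piece you display, $H^{s,t'-|z|}_{\mathfrak m}(M) \ra H^{s,t'}_{\mathfrak m}(M) \ra H^{s,t'}_{\mathfrak m}(M/(z))$, does not contain the group you are trying to kill (it sits in homological degree $s$, not $s-1$), and the ``middle term'' whose vanishing you deduce is $H^{s,t'}_{\mathfrak m}(M)$, which is a hypothesis rather than the target; concluding instead that $H^{s,t'}_{\mathfrak m}(M/(z))=0$ from this piece would also fail, since the next term $H^{s+1,t'-|z|}_{\mathfrak m}(M)$ is not assumed to vanish. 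The segment you actually need, reading off your own long exact sequence one step to the left, is
$$ H^{s-1,t'}_{\mathfrak m}(M) \ra H^{s-1,t'}_{\mathfrak m}(M/(z)) \ra H^{s,t'-|z|}_{\mathfrak m}(M),$$
whose outer terms vanish because $s-1<s$ and because $t'-|z|>t$ respectively. You already have both of these vanishing inputs in hand, so the fix is a one-line relabeling, but as stated the step does not prove the claim.
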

\begin{proof}  By assumption, $z$ is not a zero divisor of $M$, so there is a short exact sequence of $R$--modules
$$0 \ra \Sigma^{|z|} M \xra{z} M \ra M/(z) \ra 0.$$
The lemma then follows from the associated long exact sequence, which has the form
$$ \dots \ra H_{\mathfrak m}^{s^{\prime}-1,t^{\prime}+|z|}(M) \ra H_{\mathfrak m}^{s^{\prime}-1,t^{\prime}+|z|}(M/(z)) \ra H_{\mathfrak m}^{s^{\prime},t^{\prime}}(M) \ra H_{\mathfrak m}^{s^{\prime},t^{\prime}+|z|}(M) \ra \dots$$
\end{proof}

By induction on the length of a regular sequence, the lemma has the following corollary.

\begin{cor} With assumptions on $(s,t)$ and $M$ as in the lemma, if $z_1,\dots,z_s$ is an $M$--regular sequence, then $H_{\mathfrak m}^{0,t^{\prime}}(M/(z_1,\dots,z_s)) = 0$ for $t^{\prime}>t+|z_1|+ \dots + |z_s|$, and
$$ H_{\mathfrak m}^{0,t+|z_1|+ \dots + |z_s|}(M/(z_1,\dots,z_s)) \simeq H_{\mathfrak m}^{s,t}(M).$$
\end{cor}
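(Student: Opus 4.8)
The plan is to induct on $s$, the length of the regular sequence, peeling off one element at a time by repeated application of the preceding lemma.

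For the base case I would take $s = 0$: the regular sequence is empty, $M/(z_1,\dots,z_s) = M$, and the two assertions reduce to the hypothesis $H^{0,t'}_{\mathfrak m}(M) = 0$ for $t' > t$ together with the tautology $H^{0,t}_{\mathfrak m}(M) \simeq H^{0,t}_{\mathfrak m}(M)$. (One could equally start at $s = 1$, where the claim is exactly the $s' = 0$ part of the lemma.)

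For the inductive step, suppose $s \geq 1$ and that the corollary is already known for regular sequences of length $s - 1$. Given an $M$-regular sequence $z_1, \dots, z_s$ and a pair $(s,t)$ satisfying the stated vanishing hypotheses for $M$, I would first apply the lemma with $z = z_1$. This produces $H^{s',t'}_{\mathfrak m}(M/(z_1)) = 0$ for $s' < s - 1$ and for $(s-1, t')$ with $t' > t + |z_1|$, together with an isomorphism $H^{s-1,\, t+|z_1|}_{\mathfrak m}(M/(z_1)) \simeq H^{s,t}_{\mathfrak m}(M)$. Thus $M/(z_1)$ and the pair $(s-1,\, t + |z_1|)$ satisfy precisely the hypotheses of the lemma (hence of the corollary) with $s$ replaced by $s-1$. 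Since $z_2, \dots, z_s$ is an $M/(z_1)$-regular sequence — a standard property of regular sequences — of length $s - 1$, I can invoke the inductive hypothesis for $M/(z_1)$, the pair $(s-1,\, t + |z_1|)$, and this sequence. It yields $H^{0,t'}_{\mathfrak m}\big(M/(z_1,\dots,z_s)\big) = 0$ for $t' > (t+|z_1|) + |z_2| + \dots + |z_s| = t + |z_1| + \dots + |z_s|$, and
$$ H^{0,\, t+|z_1|+\dots+|z_s|}_{\mathfrak m}\big(M/(z_1,\dots,z_s)\big) \;\simeq\; H^{s-1,\, t+|z_1|}_{\mathfrak m}(M/(z_1)) \;\simeq\; H^{s,t}_{\mathfrak m}(M), $$
where I have used $M/(z_1,\dots,z_s) = \big(M/(z_1)\big)/(z_2,\dots,z_s)$. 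This closes the induction.

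The argument is purely formal once the lemma is available; the only thing requiring care is the bookkeeping of degree shifts — checking that the shifted pair $(s-1,\, t + |z_1|)$ is exactly what the lemma outputs and exactly what the inductive hypothesis consumes, and that the degrees telescope to $|z_1| + \dots + |z_s|$. I expect no real obstacle beyond this; the fact that $z_2,\dots,z_s$ stays $M/(z_1)$-regular is standard commutative algebra.
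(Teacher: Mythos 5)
Your proof is correct and is exactly the argument the paper intends: the paper simply states that the corollary follows ``by induction on the length of a regular sequence,'' and your write-up is the explicit version of that induction, with the degree bookkeeping done correctly.
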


We now apply this in the case when $R=M=H^*(BG)$ and $\mathfrak m = \widetilde H^*(BG)$.  Let $c = c(G)$, $r = r(G)$.  If $z_1, \dots, z_c$ are algebra generators for the Duflot algebra $A$, then $|z_1|+ \dots + |z_s| = c + e(G)$, and $M/(z_1,\dots,z_c) = Q_AM$, and the corollary tells us the following.

\begin{prop} \label{shift prop} Suppose $H^{c(G),-c(G)+e^{\prime}}_{\mathfrak m}(H^*(BG)) = 0$ for all $e^{\prime}>e$. Then $H^{0,e(G)+e^{\prime}}_{\mathfrak m}(Q_AH^*(BG))= 0$ for all $e^{\prime}>e$, and $$H^{0,e(G)+e}_{\mathfrak m}(Q_AH^*(BG)) = H_{\mathfrak m}^{c,-c + e}(H^*(BG)).$$
\end{prop}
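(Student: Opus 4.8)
The plan is to derive this proposition as a direct application of the Corollary preceding it, once we identify the relevant objects. Set $R = M = H^*(BG)$ and $\mathfrak m = \widetilde H^*(BG)$, and let $c = c(G)$. First I would invoke Duflot's theorem, which guarantees that a Duflot algebra $A \subseteq H^*(BG)$ is a polynomial algebra on $c$ generators $z_1, \dots, z_c$ over which $H^*(BG)$ is a free — in particular, flat — module; equivalently $z_1, \dots, z_c$ is a regular $M$--sequence in $R$. (By the reduction noted just after \thmref{indec < e thm}, we may assume $A$ is genuinely polynomial; for $p$ odd with $\Z/p$ summands present one splits them off without changing any of the invariants involved.) By the definition of the type of $G$ and the formula $e(G) = \sum_{i=1}^c(a_i - 1)$, the degrees of the generators satisfy $|z_1| + \dots + |z_c| = \sum_{i=1}^c a_i = c + e(G)$. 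Moreover, since $A$ is the Duflot algebra and $H^*(BG)$ is free over it, quotienting by the regular sequence yields exactly the $A$--indecomposables: $M/(z_1, \dots, z_c) = H^*(BG) \otimes_A \F_p = Q_A H^*(BG)$.

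Next I would set up the hypotheses of the Corollary with the bidegree parameter $(s,t) = (c, -c + e)$. The Corollary requires the vanishing hypothesis "$H_{\mathfrak m}^{s',t'}(M) = 0$ for $s' < s$ and for $(s, t')$ with $t' > t$." The second part is precisely the assumption of the proposition: $H^{c, -c + e'}_{\mathfrak m}(H^*(BG)) = 0$ for all $e' > e$, i.e. $H^{c,t'}_{\mathfrak m}(H^*(BG)) = 0$ for all $t' > -c + e$. For the first part, the vanishing $H^{s'}_{\mathfrak m}(H^*(BG)) = 0$ for all $s' < c$ follows from the Proposition on local cohomology stated above (the content of \cite[Cor.6.2.8]{brodmann sharp}): $H^{s',*}_{\mathfrak m}(M) \neq 0$ only if $\operatorname{depth}_{\mathfrak m} M \leq s'$, and by Duflot's theorem $\operatorname{depth}_{\mathfrak m} H^*(BG) \geq c$. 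Hence both hypotheses of the Corollary hold.

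Applying the Corollary with the regular sequence $z_1, \dots, z_c$ of total degree $c + e(G)$ then gives immediately that
$$H^{0, t' }_{\mathfrak m}(Q_A H^*(BG)) = 0 \quad \text{for } t' > (-c + e) + (c + e(G)) = e(G) + e,$$
which is the first claimed vanishing (rewriting $t' = e(G) + e'$ with $e' > e$), together with the isomorphism
$$H^{0, (-c + e) + (c + e(G))}_{\mathfrak m}(Q_A H^*(BG)) = H^{0, e(G) + e}_{\mathfrak m}(Q_A H^*(BG)) \simeq H^{c, -c + e}_{\mathfrak m}(H^*(BG)),$$
which is the second claim. There is essentially no hard step here: the whole argument is bookkeeping with bidegrees plus correct invocation of Duflot's depth bound to clear the $s' < c$ hypothesis. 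The one place to be careful — and the only point I would double-check in writing — is the arithmetic identity $|z_1| + \dots + |z_c| = c + e(G)$, i.e. that the sum of the generator degrees of the Duflot algebra is exactly $c + e(G)$; this rests on matching the "type" $[a_1, \dots, a_c]$ of $G$ to the degrees of the $z_i$ in both the $p = 2$ and odd-$p$ cases, and on the reduction to the case where $A$ has no exterior or degree-shifting anomalies (no $\Z/p$ summands), so that all $z_i$ are genuine polynomial generators in the expected degrees.
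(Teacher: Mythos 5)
Your proposal is correct and is essentially the paper's own argument: the paper simply applies the preceding corollary with $R = M = H^*(BG)$, the regular sequence $z_1,\dots,z_c$ of Duflot algebra generators (of total degree $c + e(G)$), and $(s,t) = (c, -c+e)$, exactly as you do. Your writeup is a bit more explicit than the paper's about why the corollary's hypotheses hold — in particular invoking Duflot's depth bound to clear the $s' < c$ vanishing — but that is just unpacking what the paper leaves tacit, not a different route.
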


Now we note

\begin{prop}  \label{QA prop} $Q_ACess^*(BG) = H^{0,*}_{\mathfrak m}(Q_ACess^*(BG)) = H^{0,*}_{\mathfrak m}(Q_AH^*(BG))$.
\end{prop}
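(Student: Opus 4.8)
The plan is to prove the two claimed equalities separately, each by exhibiting the relevant modules as torsion or torsion-free in the appropriate sense with respect to $\mathfrak m = \widetilde H^*(BG)$. Recall that $Cess^*(BG)$ is, by definition, the kernel of the restriction map to $\prod_{C \lvertneqq E} H^*(BC_G(E))$, and that by \corref{prim indec cor} it is a finitely generated free $A$-module; hence $Q_A Cess^*(BG)$ is a finite-dimensional $\F_p$-vector space. A finite-dimensional graded module over $R = H^*(BG)$ is automatically $\mathfrak m$-power torsion, so $Q_A Cess^*(BG) = H^{0,*}_{\mathfrak m}(Q_A Cess^*(BG))$ trivially; this gives the first equality and is not the substantive point.

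For the second equality, the key input is part (c) of \propref{Cess prop}: the sequence
$$ 0 \ra Q_A Cess^*(BG) \ra Q_A H^*(BG) \ra \prod_{C\lvertneqq E} Q_A H^*(BC_G(E)) $$
is exact. So $Q_A Cess^*(BG)$ is precisely the kernel of the map $Q_A H^*(BG) \to \prod Q_A H^*(BC_G(E))$, and it suffices to show that this kernel equals $H^{0,*}_{\mathfrak m}(Q_A H^*(BG))$, i.e. that $Q_A Cess^*(BG)$ is exactly the $\mathfrak m$-torsion submodule of $Q_A H^*(BG)$. The inclusion $Q_A Cess^*(BG) \subseteq H^{0,*}_{\mathfrak m}(Q_A H^*(BG))$ follows from the first paragraph together with functoriality: $Q_A Cess^*(BG)$ is finite-dimensional, hence $\mathfrak m$-torsion, and it sits inside $Q_A H^*(BG)$. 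For the reverse inclusion I would argue that the quotient $Q_A H^*(BG)/Q_A Cess^*(BG)$ injects into $\prod_{C \lvertneqq E} Q_A H^*(BC_G(E))$, and that each $Q_A H^*(BC_G(E))$, for $C \lvertneqq E$, has \emph{no} nonzero $\mathfrak m$-torsion. The latter is where the rank/depth bookkeeping enters: for $C(G) \lvertneqq E < G$ the centralizer $C_G(E)$ has the property that the Duflot algebra $A$ maps to $H^*(BC_G(E))$ in a way making the latter a module whose $A$-indecomposables $Q_A H^*(BC_G(E))$ have positive Krull dimension (the central rank of $C_G(E)$ strictly exceeds the portion killed by $A$), so there are no nonzero elements annihilated by a power of $\mathfrak m$; equivalently $H^{0,*}_{\mathfrak m}(Q_A H^*(BC_G(E))) = 0$. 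Granting this, any $\mathfrak m$-torsion element of $Q_A H^*(BG)$ maps to an $\mathfrak m$-torsion element of the product, hence to zero, hence lies in $Q_A Cess^*(BG)$.

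The main obstacle is the vanishing $H^{0,*}_{\mathfrak m}(Q_A H^*(BC_G(E))) = 0$ for $C \lvertneqq E$, which is really the assertion that $Q_A H^*(BC_G(E))$ has positive depth over the image of $A$ — or, dually, that the free $A$-module structure on $H^*(BC_G(E))$ does not degenerate to something finite after tensoring down. This is exactly the kind of statement controlled by Duflot's depth theorem applied to $C_G(E)$ relative to its own maximal central elementary abelian subgroup, combined with the fact that $C(C_G(E)) \supseteq E \supsetneq C(G)$; the strict containment is what forces at least one extra polynomial generator to survive in $Q_A H^*(BC_G(E))$. I expect this to follow cleanly from the results of \cite{k5} already cited for \propref{Cess prop} — indeed \propref{Cess prop}(c) and \corref{prim indec fin cor} essentially package it — so the proof should reduce to assembling these pieces rather than proving anything genuinely new. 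Everything else (finite-dimensionality giving $\mathfrak m$-torsion, left-exactness of $H^{0,*}_{\mathfrak m}$, functoriality of $Q_A$) is formal.
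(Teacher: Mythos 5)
Your proposal is correct and follows essentially the same route as the paper: the first equality from finite-dimensionality of $Q_ACess^*(BG)$, the second by applying the left-exact functor $H^{0,*}_{\mathfrak m}$ to the exact sequence of \propref{Cess prop}(c) and showing each $Q_AH^*(BC_G(E))$ with $C\lvertneqq E$ has vanishing $\mathfrak m$-torsion, which the paper supplies via \cite[Lem.8.8]{k5} by producing a $z\in\mathfrak m$ acting regularly on every such $Q_AH^*(BC_G(E))$. One caution: your parenthetical appeal to \emph{positive Krull dimension} does not by itself rule out $\mathfrak m$-torsion --- what is needed (and what you correctly say a sentence later) is positive depth, i.e.\ an $\mathfrak m$-regular element on the quotient, which is exactly what the cited lemma provides.
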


Our argument is similar to that proving {\cite[Prop.8.9]{k5}}.  We need

\begin{lem}[{\cite[Lem.8.8]{k5}}] Assume $c<r$.  Given any sequence $z_1, \dots, z_c \in H^*(G)$ that generates the polynomial algebra $A$, there exists $z \in H^*(BG)$ such that, for all proper inclusions $C<E$, $z_1, \dots, z_c, z$ restricts to a regular sequence in $H^*(BC_G(E))$.
\end{lem}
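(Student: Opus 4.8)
The plan is to construct $z$ by a graded prime avoidance argument, once it is shown that for every elementary abelian $E$ with $C < E$ a proper inclusion the sequence $z_1, \dots, z_c$ \emph{already} restricts to a regular sequence in $H^*(BC_G(E))$ with a quotient of positive depth. Write $\rho_E \colon H^*(BG) \ra H^*(BC_G(E))$ for restriction and $x_E = \rho_E(x)$, and recall that $H^*(BC_G(E))$ is a finitely generated $H^*(BG)$--module via $\rho_E$. Since $E$ is abelian it lies in $C_G(E)$, and $C_G(E)$ centralizes it by definition, so $E$ is a central elementary abelian subgroup of $C_G(E)$; as $C < E$ properly, $\rk(E) \geq c+1$.

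\emph{Step 1: $(z_1)_E, \dots, (z_c)_E$ is a regular sequence in $H^*(BC_G(E))$, and $N_E := H^*(BC_G(E))/((z_1)_E, \dots, (z_c)_E)$ has depth at least one.} Restrict one more step, to $H^*(BE)$. Because $A \ra H^*(BG) \ra H^*(BE) \ra H^*(BC)$ is the isomorphism $i^*$ of $A$ onto $\im(i^*) = \F_p[\zeta_1, \dots, \zeta_c]$, the images $z_1|_E, \dots, z_c|_E$ are carried onto the polynomial generators $\zeta_1, \dots, \zeta_c$ by the standard surjection $H^*(BE) \era H^*(BC)$. Adjoining to $z_1|_E, \dots, z_c|_E$ an appropriate set of generators of $\ker(H^*(BE) \era H^*(BC))$ yields a homogeneous system of parameters of the Cohen--Macaulay algebra $H^*(BE)$ --- the associated quotient maps onto $H^*(BC)/(\zeta_1, \dots, \zeta_c)$, which is finite dimensional since $H^*(BC)$ is a finite $\im(i^*)$--module. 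Hence $z_1|_E, \dots, z_c|_E$ is a regular sequence in $H^*(BE)$ whose quotient has Krull dimension $\rk(E) - c \geq 1$. Now Duflot's theorem \cite{duflot}, in the refined form used in \cite{k5} (cf.\ \cite{broto henn}), applied to the central inclusion $E \leq C(C_G(E))$, tells us that a sequence in $H^*(BC_G(E))$ whose restrictions to $H^*(BE)$ form a regular sequence is itself a regular sequence; this gives the first claim. For the second, the same input gives $\depth H^*(BC_G(E)) \geq \rk(C(C_G(E))) \geq \rk(E) \geq c+1$, and quotienting by a regular sequence of length $c$ lowers depth by exactly $c$, so $\depth N_E \geq 1$; equivalently, the irrelevant maximal ideal $\mathfrak m_E$ of $H^*(BC_G(E))$ is not an associated prime of $N_E$.

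\emph{Step 2: producing $z$.} By Quillen's theorem there are only finitely many conjugacy classes of elementary abelian $E$ with $C < E$ properly, and conjugate subgroups yield $H^*(BG)$--equivariantly isomorphic data, so it is enough to treat one representative from each class. For such an $E$, let $\mathfrak p_{E,1}, \dots, \mathfrak p_{E,n_E}$ be the finitely many (homogeneous) associated primes of the $H^*(BC_G(E))$--module $N_E$; by Step 1 none of them is $\mathfrak m_E$. Put $\mathfrak q_{E,k} = \rho_E^{-1}(\mathfrak p_{E,k})$, a homogeneous prime of $H^*(BG)$. As $\rho_E$ is a finite ring map, a prime of $H^*(BC_G(E))$ whose contraction is the irrelevant ideal $\mathfrak m_G = \widetilde H^*(BG)$ must be maximal, hence equal to $\mathfrak m_E$; since $\mathfrak p_{E,k} \neq \mathfrak m_E$, we get $\mathfrak q_{E,k} \neq \mathfrak m_G$, so $\widetilde H^*(BG) \not\subseteq \mathfrak q_{E,k}$. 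Graded prime avoidance in $H^*(BG)$ then yields a homogeneous element $z$ of positive degree in $\widetilde H^*(BG)$ lying outside every $\mathfrak q_{E,k}$. For each $E$, the element $z_E$ then avoids every associated prime of $N_E$, hence is a non--zero--divisor on $N_E$; combined with Step 1, $(z_1)_E, \dots, (z_c)_E, z_E$ is a regular sequence in $H^*(BC_G(E))$, which is what was wanted.

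\emph{The main obstacle} is Step 1: transferring regularity from $H^*(BE)$ up to $H^*(BC_G(E))$, and establishing the bound $\depth N_E \geq 1$ --- precisely the room that prime avoidance needs --- both rest on Duflot's theorem and its refinements, via the centrality of $E$ in $C_G(E)$. Once Step 1 is in place, Step 2 uses only soft inputs: the finiteness of conjugacy classes of elementary abelians, the finiteness of sets of associated primes, and the fact that restriction along a subgroup inclusion makes cohomology a finite module.
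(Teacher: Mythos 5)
The paper itself gives no proof of this lemma --- it is imported verbatim from \cite{k5}, so there is nothing in the present text to compare against line by line. Your argument is correct and is essentially the intended one: centrality of $E$ in $C_G(E)$ together with the Broto--Henn refinement of Duflot's theorem shows that $z_1,\dots,z_c$ already restricts to a regular sequence on $H^*(BC_G(E))$ with $\depth N_E\geq c(C_G(E))-c\geq \rk(E)-c\geq 1$, and then Quillen's finiteness of conjugacy classes of elementary abelians, the finiteness of $\operatorname{Ass}(N_E)$, and homogeneous prime avoidance (valid over $\F_p$) produce $z$. The only point worth tightening is your statement of the Duflot refinement in Step 1: the form actually proved in \cite{broto henn} and \cite[\S 2]{k5} is that elements whose restrictions form a regular sequence in $\im\bigl(H^*(BC_G(E))\ra H^*(BE)\bigr)$ are themselves a regular sequence (via the Duflot filtration, since the elements need not lie in the chosen Duflot subalgebra); this is equivalent to your phrasing because $H^*(BE)$ is free, hence faithfully flat, over that image sub-Hopf-algebra by Milnor--Moore.
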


\begin{proof}[Proof of \propref{QA prop}]  As $Q_ACess^*(BG)$ is finite dimensional, we clearly have $$Q_ACess^*(BG) = H^{0,*}_{\mathfrak m}(Q_ACess^*(BG)).$$
By \propref{Cess prop}, we have an exact sequence
$$ 0 \ra Q_ACess^*(BG) \ra Q_AH^*(BG) \ra \prod_{C\lvertneqq E} Q_AH^*(BC_G(E)),$$
and this induces an exact sequence
$$ 0 \ra H^{0,*}_{\mathfrak m}(Q_ACess^*(BG)) \ra H^{0,*}_{\mathfrak m}(Q_AH^*(BG)) \ra \prod_{C\lvertneqq E} H^{0,*}_{\mathfrak m}(Q_AH^*(BC_G(E))).$$
But the last term here is 0, because if $z \in H^*(BG)$ is chosen as in the lemma, then $z$ will act regularly on each $Q_AH^*(BC_G(E))$ with $C(G)\lvertneqq E$.
\end{proof}

The last two propositions combine to prove the next theorem.

\begin{thm}  \label{e indec loc coh thm}  $e_{indec}(G) = e(G) + \max\{ e \ | \ H^{c(G),-c(G)+e}_{\mathfrak m}(H^*(BG)) \neq 0\}$.
\end{thm}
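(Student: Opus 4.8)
The plan is to obtain \thmref{e indec loc coh thm} as an essentially formal consequence of \propref{shift prop} and \propref{QA prop}, the real homological work having already been carried out there; what remains is to assemble the pieces and dispose of a degenerate case. First I would set $S = \{ e \mid H^{c(G),-c(G)+e}_{\mathfrak m}(H^*(BG)) \neq 0 \}$ and note that $S$ is bounded above, so that $\max S$ is defined whenever $S \neq \emptyset$: for $\mathfrak m$ the irrelevant ideal of a graded Noetherian ring, the graded local cohomology modules of a finitely generated module vanish in all sufficiently high internal degrees (see \cite{brodmann sharp}). If $S = \emptyset$, then the proposition recalled above on the range of nonvanishing of local cohomology forces $\depth H^*(BG) > c(G)$, which is precisely the case in which $Cess^*(BG) = 0$; then $e_{indec}(G) = -\infty$ and the asserted identity reads $-\infty = -\infty$. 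So from now on I assume $S \neq \emptyset$ and put $e^* = \max S$.

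Next I would apply \propref{shift prop} with $e = e^*$. Its hypothesis is exactly the maximality of $e^*$, namely $H^{c(G),-c(G)+e'}_{\mathfrak m}(H^*(BG)) = 0$ for all $e' > e^*$. The conclusion then yields $H^{0,e(G)+e'}_{\mathfrak m}(Q_AH^*(BG)) = 0$ for all $e' > e^*$, together with an isomorphism $H^{0,e(G)+e^*}_{\mathfrak m}(Q_AH^*(BG)) \cong H^{c,-c+e^*}_{\mathfrak m}(H^*(BG))$, whose right-hand side is nonzero since $e^* \in S$. In other words, the top nonzero internal degree of the graded module $H^{0,*}_{\mathfrak m}(Q_AH^*(BG))$ is exactly $e(G) + e^*$.

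Finally I would invoke \propref{QA prop}, which identifies $Q_ACess^*(BG)$ with $H^{0,*}_{\mathfrak m}(Q_AH^*(BG))$ as graded vector spaces; note that $Q_ACess^*(BG)$ is finite dimensional by \corref{prim indec cor}, so its top nonzero degree is precisely $e_{indec}(Cess^*(BG)) = e_{indec}(G)$. Combining with the previous paragraph gives $e_{indec}(G) = e(G) + e^*$, which is the assertion. The only genuine obstacle is the bookkeeping of the first step — identifying the degenerate case and citing the boundedness of graded local cohomology — since the substance of the theorem is fully contained in the two propositions being combined.
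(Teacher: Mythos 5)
Your proof is correct and is precisely the argument the paper intends, since the paper simply remarks that "the last two propositions combine to prove the next theorem" without writing out the details. Your handling of the degenerate case ($S = \emptyset$, forcing $\depth H^*(BG) > c(G)$ and hence $Cess^*(BG) = 0$) is a useful explicit check that the paper leaves implicit.
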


\begin{proof}[Proof of \thmref{indec < e thm}]  Symonds \cite{symonds} has proved that $$H^{s,t}_{\mathfrak m}(H^*(BG)) = 0 \text{ if } s+t > - \dim G.$$  Combined with the last theorem, this immediately implies the first part of the theorem: for all compact Lie group $G$,
$$e_{indec}(G) \leq e(G) - \dim G.$$
Furthermore, this inequality will be strict if and only if
$$H^{c(G),-c(G)-\dim(G)}_{\mathfrak m}(H^*(BG)) = 0.$$

To deduce more, we need to recall why Symond's result (in the finite group case) had been conjectured by Benson.  As constructed by J.P.C.Greenlees and Benson \cite{benson greenlees}, there is a spectral sequence
$$ H^{s,t}_{\mathfrak m}(H^*(BG)) = E_2^{s,t} \Rightarrow \tilde H_{-s-t}(EG_+ \sm_G S^{Ad(G)};\F_p),$$
where $S^{Ad(G)}$ is the one point compactification of the adjoint representation, so Benson was conjecturing that some evident vanishing at the level of $E_{\infty}$ happened already at $E_2$.

By Symonds' theorem, the group $H^{c(G),-c(G)-\dim(G)}_{\mathfrak m}(H^*(BG))$ consists of permanent cycles, as the differentials off of this group will take values in groups that are zero.  As this group is certainly not in the image of nonzero boundary maps, it will thus be a quotient of
$$ \tilde H_{\dim(G)}(EG_+ \sm_G S^{Ad(G)};\F_p) \simeq \begin{cases}
\F_p & \text{if } Ad(G) \text{ is $\F_p$--oriented.} \\ 0 & \text{if not.}
\end{cases}$$

In the oriented case, $H^{r(G),-r(G)-\dim(G)}_{\mathfrak m}(H^*(BG)) \simeq \F_p$, by a  generalization to all compact Lie groups of Benson's argument \cite{benson nyjournal} in the finite group case.  (The generalization is straightforward, using the transfer map $H^*(BE) \ra H^{*+\dim(G)}(BG)$ associated to an inclusion $E < G$.)

Thus, in either the oriented or nonoriented case, we see that
$$H^{c(G),-c(G)-\dim(G)}_{\mathfrak m}(H^*(BG)) = 0$$ unless $c(P) = r(P)$, i.e. $G$ is $p$--central.  In the $p$--central case, $G$ will be oriented and $e_{indec}(G) = e(G)$.  But arguing as in \cite{k5}, one can do better: the top class in $Q_AH^*(BG)$ will be represented by a $H^*(BC)$--primitive, so $e_{prim}(G) = e_{indec}(G)$.
\end{proof}

We end this section by noting that our results above include a proof of Carlson's Depth Conjecture in the case of minimal depth, generalizing results in \cite{green,k5}.   Note that
$$e_{indec}(G) \neq -\infty \Leftrightarrow e_{indec}(G) \geq 0 \Leftrightarrow Q_ACess^*(G) \neq 0 \Leftrightarrow Cess^*(G) \neq 0,$$ and
$$H^{c(G),*}_{\mathfrak m}(H^*(BG)) \neq 0 \Leftrightarrow H^*(BG) \text{ has depth precisely } c(G).$$  Therefore, \thmref{e indec loc coh thm} tells us most of the following, and Symond's theorem tells us the rest.

\begin{thm} For $G$ compact Lie, $H^*(BG)$ has depth precisely $c(G)$ if and only if $H^*(BG)$ is not detected by restriction to the cohomology rings $H^*(BC_G(E))$ for $E < G$ of rank greater than $c(G)$.  In this case, $H^{c(G),t}_{\mathfrak m}(H^*(BG)) \neq 0$ for some $-c(G) - \dim(G) \leq t\leq -c(G)-e(G)$.
\end{thm}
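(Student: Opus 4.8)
The plan is to assemble the final theorem entirely from statements already in hand, so the work is one of bookkeeping rather than new mathematics. First I would record the chain of equivalences
$$ H^{c(G),*}_{\mathfrak m}(H^*(BG)) \neq 0 \iff \depth_{\mathfrak m} H^*(BG) = c(G),$$
which is immediate from the Proposition on local cohomology (it vanishes outside the range $\depth \le s \le \dim$, and is nonzero at $s = \depth$). Next, by \propref{QA prop} and \corref{prim indec fin cor}, $Cess^*(BG) \neq 0$ exactly when $Q_A Cess^*(BG) = H^{0,*}_{\mathfrak m}(Q_A H^*(BG)) \neq 0$, and by \thmref{e indec loc coh thm} this happens exactly when $H^{c(G),*}_{\mathfrak m}(H^*(BG)) \neq 0$. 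Finally, $Cess^*(BG)$ is by definition the kernel of restriction to $\prod_{C(G) \lvertneqq E} H^*(BC_G(E))$, so $Cess^*(BG) \neq 0$ is precisely the assertion that $H^*(BG)$ is not detected by restriction to the rings $H^*(BC_G(E))$ for $E$ of rank exceeding $c(G)$ (one uses that any such $E$ contains a rank-$(c(G)+1)$ subgroup, and that restriction factors through a larger centralizer, to reduce to the $E$ strictly containing $C(G)$; this is the one slightly fiddly point). Chaining these three equivalences gives the ``if and only if'' half of the theorem.

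For the range statement in the ``in this case'' clause, I would invoke Symonds' theorem, $H^{s,t}_{\mathfrak m}(H^*(BG)) = 0$ for $s+t > -\dim G$, applied with $s = c(G)$: this forces any nonzero $H^{c(G),t}_{\mathfrak m}$ to have $t \le -c(G) - \dim G$. Wait — that is an upper bound on $t$ of the wrong sign relative to what is claimed; let me reread. The claim is $-c(G) - \dim(G) \le t \le -c(G) - e(G)$, so I need both a lower and an upper bound on the top nonzero degree. The upper bound $t \le -c(G) - e(G)$ comes from \thmref{e indec loc coh thm} together with $e_{indec}(G) \le e(G) - \dim G \le e(G)$ when $\dim G \ge 0$: writing $t = -c(G) + e$ with $H^{c(G), -c(G)+e}_{\mathfrak m} \neq 0$, the maximal such $e$ equals $e_{indec}(G) - e(G) \le -\dim G \le 0$, hence $t \le -c(G)$; but more precisely, since for the minimal-depth case $Cess^*(BG) \neq 0$ forces $e_{indec}(G) \ge 0$, we get... hmm, I should be careful: I want $t \le -c(G) - e(G)$, i.e. $e \le -e(G)$, which would need $e_{indec}(G) \le 0$. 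Since $e_{indec}(G) \ge 0$ when $Cess \neq 0$, the only consistent reading is that the interval is $[-c(G)-\dim G, \ -c(G)+e_{indec}(G)-e(G)]$ and the displayed form uses $e_{indec}(G) \le e(G)-\dim G$ to bound the right end; I would simply cite \thmref{indec < e thm} directly for the upper end rather than re-deriving it, and Symonds plus the nonvanishing-at-$\depth$ Proposition for the lower end.

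The main obstacle I anticipate is purely expository: making precise the passage between ``not detected by restriction to $H^*(BC_G(E))$ for all $E$ of rank $> c(G)$'' and ``$Cess^*(BG) \neq 0$,'' since $Cess^*$ is literally defined via the $E$ with $C(G) \lvertneqq E$, not all $E$ of large rank. The reconciliation uses that if $x$ restricts to zero on every $H^*(BC_G(E))$ with $C(G) \lvertneqq E$, then in particular it restricts to zero on all $E$ of rank $> c(G)$ (enlarge $E$ to contain $C(G)$ using that $C(G)$ is central, and $C_G(E') \subseteq C_G(E)$ for $E \subseteq E'$, so restriction factors through the larger centralizer), and conversely. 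Once this is dispatched, everything else is a citation. I would therefore present the proof as: (1) the detection reformulation of $Cess^*(BG) \neq 0$; (2) the equivalence $Cess^*(BG) \neq 0 \iff H^{c(G),*}_{\mathfrak m}(H^*(BG)) \neq 0$ via \thmref{e indec loc coh thm} and \corref{prim indec fin cor}; (3) the equivalence with minimal depth via the structure of local cohomology; (4) the degree bounds from \thmref{indec < e thm} and Symonds' theorem.
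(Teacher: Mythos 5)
Your proposal follows the same route as the paper's (deliberately terse) argument: the paper simply records the chain $Cess^*(BG) \neq 0 \Leftrightarrow Q_ACess^*(BG) \neq 0 \Leftrightarrow e_{indec}(G) \geq 0 \Leftrightarrow H^{c(G),*}_{\mathfrak m}(H^*(BG)) \neq 0 \Leftrightarrow \depth H^*(BG) = c(G)$ and cites \thmref{e indec loc coh thm} together with Symonds' theorem. Your treatment of the ``if and only if'' half is correct, including the one fiddly point: since $C(G)$ is central, $C_G(E \cdot C(G)) = C_G(E)$ for any elementary abelian $E$ of rank $> c(G)$, and conversely any $E$ with $C(G) \lvertneqq E$ has rank $> c(G)$, so the two detection kernels coincide. (One step you leave implicit: the equivalence $\depth = c(G) \Leftrightarrow H^{c(G),*}_{\mathfrak m} \neq 0$ needs Duflot's inequality $\depth \geq c(G)$ in addition to the vanishing/nonvanishing proposition for local cohomology.)

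On the degree range you go astray, though the fault lies partly with the statement itself: the two endpoints as printed are transposed. The assertion that is actually provable --- and the one consistent with the corollary that follows it in the paper --- is that $H^{c(G),t}_{\mathfrak m}(H^*(BG)) \neq 0$ for some $t$ with $-c(G)-e(G) \leq t \leq -c(G)-\dim(G)$. Indeed, let $t_{\max} = -c(G) + e_{\max}$ where $e_{\max}$ is the largest $e$ with $H^{c(G),-c(G)+e}_{\mathfrak m} \neq 0$; by \thmref{e indec loc coh thm} one has $e_{\max} = e_{indec}(G) - e(G)$, and in the minimal-depth case $e_{indec}(G) \geq 0$, so $t_{\max} \geq -c(G)-e(G)$, while Symonds' theorem gives $t_{\max} \leq -c(G)-\dim(G)$. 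Your proposed ``consistent reading'' $[-c(G)-\dim(G),\, -c(G)+e_{indec}(G)-e(G)]$ puts the Symonds bound on the wrong side: it is an upper bound for $t$, not a lower one. Once the endpoints are written in the correct order, the two ingredients you name are exactly the right ones and the argument closes; note also that this corrected form is what makes the subsequent corollary ($e(G) < \dim(G)$ forces depth $> c(G)$, via an empty interval) come out.
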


\begin{cor} \ If $G$ is compact Lie, and $e(G) < \dim(G)$ then $H^*(BG)$ has depth greater than $c(G)$ and is detected by restriction to the cohomology rings $H^*(BC_G(E))$ for $E < G$ of rank greater than $c(G)$.
\end{cor}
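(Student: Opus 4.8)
The plan is to deduce the corollary directly from the theorem that immediately precedes it, by contraposition. First I would recall the chain of equivalences just displayed before that theorem: $e_{indec}(G) \neq -\infty$ if and only if $Cess^*(BG) \neq 0$, and $H^{c(G),*}_{\mathfrak m}(H^*(BG)) \neq 0$ if and only if $H^*(BG)$ has depth precisely $c(G)$. Combined with the theorem, saying that $H^*(BG)$ is detected by restriction to the $H^*(BC_G(E))$ with $r(E) > c(G)$ is the same as saying $Cess^*(BG) = 0$, which in turn is the same as $e_{indec}(G) = -\infty$. So the corollary asserts: if $e(G) < \dim(G)$, then $e_{indec}(G) = -\infty$ (equivalently $Cess^*(BG)=0$) and $H^*(BG)$ has depth strictly greater than $c(G)$.

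The key step is then an elementary inequality. By \thmref{indec < e thm}, $e_{indec}(G) \leq e(G) - \dim(G)$. The hypothesis $e(G) < \dim(G)$ gives $e(G) - \dim(G) < 0$, hence $e_{indec}(G) < 0$. But $Q_ACess^*(BG)$ is a graded module concentrated in nonnegative degrees, so $e_{indec}(G)$, its top nonzero degree, is either $-\infty$ or a nonnegative integer; the only way it can be negative is $e_{indec}(G) = -\infty$. (One should note here that $\dim(G) > e(G) \geq 0$ forces $\dim(G) > 0$, so in particular $G$ is not $p$--central, consistent with the strict inequality in \thmref{indec < e thm}.) Thus $Cess^*(BG) = 0$, and by the preceding theorem $H^*(BG)$ is detected by restriction to the $H^*(BC_G(E))$ with $r(E) > c(G)$, and $H^*(BG)$ has depth greater than $c(G)$.

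There is essentially no obstacle here: the content is all in \thmref{indec < e thm} (hence in Symonds' theorem) and in the list of equivalences preceding the last theorem, both of which we are entitled to assume. The only point requiring a word of care is the passage from "$e_{indec}(G) < 0$" to "$e_{indec}(G) = -\infty$", which relies on the fact that $Cess^*(BG) \subseteq H^*(BG)$ lives in nonnegative degrees and hence so does its module of $A$--indecomposables; once this is noted the argument is complete. I would present the proof in three or four lines accordingly.
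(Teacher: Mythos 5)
Your argument is correct and is essentially the paper's: the corollary is read off from the preceding theorem, whose quantitative clause encodes exactly the inequality $e_{indec}(G)\leq e(G)-\dim(G)$ from \thmref{indec < e thm} together with the fact that $e_{indec}(G)\geq 0$ whenever $Cess^*(BG)\neq 0$. One cosmetic slip: in your parenthetical, $\dim(G)>0$ does \emph{not} force $G$ to be non-$p$-central (a torus is $p$-central); but this aside is not needed, since the non-strict inequality already gives $e_{indec}(G)<0$ and hence $e_{indec}(G)=-\infty$.
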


\section{New results for finite $p$ groups} \label{new p group results}

We now prove various new results about $e(P)$ when $P$ is a finite $p$--group.  We begin with a proof of \thmref{subgroup thm}, with part of the discussion relevant for all compact Lie groups $G$.  We will next deduce \thmref{d(P) thm} and \thmref{perm thm} assuming \thmref{monotone thm}.  Finally we will reduce \thmref{monotone thm} to a problem in invariant theory, to be solved in the subsequent section.

\subsection{Upper bounds for $e(P)$ coming from Chern classes.}
We use Chern classes of representation to get group theoretic upper bounds for $e(P)$ when $P$ is a finite $p$--group.  With $C=C(P)$, we need to get a lower bound on $\im(i^*)$, the image of restriction
$$ i^*: H^*(BP) \ra H^*(BC).$$

To set up notation and unify exposition, let $c = c(P)$ and let
\begin{equation*}
H^*(BC) \simeq
\begin{cases}
\F_2[x_1,\dots,x_c] & \text{if $p=2$}  \\ \Lambda(x_1,\dots,x_c) \otimes \F_p[y_1,\dots,y_c] & \text{if $p$ is odd},
\end{cases}
\end{equation*}
where $y_i \in H^2(BC)$ denotes  $\beta(x_i)$ for all primes (so that $y_i = x_i^2$ when $p=2$).  Note that each element $y_i$ is the Chern class of a unique one dimensional complex representation $\omega_i$ of $C$.

Now let $A < P$ be a maximal abelian subgroup, so that $A$ certainly contains $C$.  Each $\omega_i$ extends, possibly nonuniquely, to a one dimensional representation $\tilde \omega_i$ of $A$.  Now let $\rho_i = \Ind_A^P(\tilde \omega_i)$, a representation of $P$ of dimension $[P:A] = |P|/|A|$.

By construction, the restriction of $\rho_i$ to $C$ will be $|P|/|A|\omega_i$, which has top Chern class $y_i^{|P|/|A|}$.  We have proved the next theorem, a precise form of \thmref{subgroup thm}.

\begin{thm}  The Hopf algebra $\im(i^*)$ contains $\F_p[y_1^{|P|/|A|}, \dots, y_c^{|P|/|A|}]$.  Thus $e(P) \leq c(P) (2|P|/|A| - 1)$.
\end{thm}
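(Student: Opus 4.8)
The plan is to produce, inside $\im(i^*) \subseteq H^*(BC)$, the polynomial subalgebra $\F_p[y_1^{|P|/|A|},\dots,y_c^{|P|/|A|}]$ by exhibiting explicit classes in $H^*(BP)$ that restrict to the powers $y_i^{|P|/|A|}$. The mechanism is induction of one-dimensional representations, and the key input is the compatibility of top Chern classes with restriction to $C$.

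First I would recall that $C = C(P)$ is elementary abelian of rank $c$, so $H^*(BC)$ is (a tensor product with an exterior algebra on) the polynomial algebra $\F_p[y_1,\dots,y_c]$ with $y_i = \beta(x_i) \in H^2(BC)$, and that each $y_i$ is the first Chern class $c_1(\omega_i)$ of a unique one-dimensional complex representation $\omega_i$ of $C$. Next I would fix a maximal abelian subgroup $A < P$; since $C$ is central it is contained in every maximal abelian subgroup, so $C \le A$. Because $A$ is abelian, each $\omega_i$ extends to a one-dimensional representation $\tilde\omega_i$ of $A$ (not canonically — the obstruction to extending a character of $C$ to $A$ vanishes since $A$ is abelian, though one may have to choose among extensions; any choice works for the argument). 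Then set $\rho_i = \Ind_A^P(\tilde\omega_i)$, a complex representation of $P$ of dimension $[P:A] = |P|/|A|$, and consider its top Chern class $c_{\mathrm{top}}(\rho_i) \in H^{2|P|/|A|}(BP)$.

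The heart of the proof is the computation of $i^*c_{\mathrm{top}}(\rho_i) = c_{\mathrm{top}}(\Res^P_C \rho_i)$. By the double coset / Mackey formula for restriction of an induced representation, $\Res^P_C \Ind_A^P \tilde\omega_i$ decomposes as a sum of $C$-representations indexed by double cosets $C\backslash P / A$; but $C$ is central, so $C\backslash P/A = P/A$ has exactly $[P:A]$ elements, and on each summand the $C$-action is just $\omega_i$ (the character $\tilde\omega_i$ restricted back to the central $C$ is $\omega_i$, and conjugating by elements of $P$ does nothing to a central subgroup). Hence $\Res^P_C \rho_i = (|P|/|A|)\,\omega_i$, whose top Chern class is $c_1(\omega_i)^{|P|/|A|} = y_i^{|P|/|A|}$. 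Therefore $y_i^{|P|/|A|} \in \im(i^*)$ for each $i$, and since the $y_i$ are algebraically independent in $H^*(BC)$ the subalgebra generated by these powers is the free polynomial algebra $\F_p[y_1^{|P|/|A|},\dots,y_c^{|P|/|A|}]$, which must therefore sit inside $\im(i^*)$ (a sub-Hopf-algebra of $H^*(BC)$).

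Finally I would translate this into a bound on $e(P)$. Recall $e(P)$ is the top degree of $H^*(BC)\otimes_{H^*(BP)}\F_p = H^*(BC)\otimes_{\im(i^*)}\F_p$. If $\im(i^*) \supseteq \F_p[y_1^{m},\dots,y_c^{m}]$ with $m = |P|/|A|$, then $H^*(BC)$ is a quotient module of $H^*(BC)\otimes_{\F_p[y_1^m,\dots,y_c^m]}\F_p$, whose top degree is the top degree of the truncated algebra; at $p=2$, $\F_2[x_1,\dots,x_c]/(x_1^{2m},\dots,x_c^{2m})$ has top degree $c(2m-1)$, and at odd $p$, $(\Lambda(x_1,\dots,x_c)\otimes\F_p[y_1,\dots,y_c])/(y_1^m,\dots,y_c^m)$ has top degree $c(1 + 2(m-1)) = c(2m-1)$ as well. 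In both cases this gives $e(P) \le c(P)(2|P|/|A| - 1)$. The main obstacle — really the only delicate point — is justifying $\Res^P_C \Ind_A^P \tilde\omega_i = (|P|/|A|)\omega_i$; once the centrality of $C$ is used to collapse the double cosets and to see that conjugation acts trivially on $C$, the rest is the standard multiplicativity of the total Chern class under direct sum together with the elementary degree count in the truncated (exterior$\,\otimes\,$polynomial) algebra.
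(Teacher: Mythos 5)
Your proposal is correct and follows exactly the paper's argument: induce the character $\tilde\omega_i$ from a maximal abelian subgroup $A \supseteq C$, observe that its restriction to the central $C$ is $(|P|/|A|)\,\omega_i$ so that the top Chern class restricts to $y_i^{|P|/|A|}$, and then count degrees in the truncated algebra. The only difference is that you spell out the Mackey-formula justification and the final degree count, which the paper leaves implicit.
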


\begin{rem}  Let $e_{grp}(P) = c(P) (2|P|/|A| - 1)$, where $A < P$ is an abelian subgroup of maximal order; thus the theorem says that $e(P) \leq e_{grp}(P)$.   With arguments similar, but simpler, to ones we will use in the proof of \thmref{monotone thm}, it is not hard to prove that this invariant of $p$--groups has the following monotonicity property:
$$ \text{if } Q < P, \text{ then } e_{grp}(Q)\leq e_{grp}(P).$$  This property suffices to deduce that if $P$ is a finite $p$--group, then $d^{\U}(P) \leq e_{grp}(P)$:
$$ d^{\U}(P) \leq \max_{E<G} \{ e(C_G(E)) \} \leq \max_{E<G} \{ e_{grp}(C_G(E)) \} \leq e_{grp}(P).$$
\end{rem}

\subsection{Conjectural upper bounds for $e(G)$ coming from Chern classes.} \label{chern conj section}

We continue in the spirit of the last subsection, and discuss how one might use Chern classes to prove \conjref{chern conj}.  This says that, if $n(G)$ is the minimal dimension of a faithful representation of a compact Lie group $G$, then $e(G) \leq 2n(G) - c(G)$.

With $C = C(G)$, the calculation of $e(G)$ requires understanding of the Hopf algebra $\im(i^*)$, the image of the restriction
$$ i^*: H^*(BG) \ra H^*(BC).$$

\begin{defns}{\bf (a)}\ If $\rho$ is a representation of $C$, let $\mathcal H(\rho) \subset H^*(BC)$ be the smallest Hopf algebra containing its Chern classes. \\
{\bf (b)} When $\rho$ is faithful, so can be viewed as an inclusion of $C$ into a unitary group $U$, $\mathcal H(\rho)$ will contain the image of $H^*(BU) \ra H^*(BC)$, and thus $H^*(BC)$ will be a finitely generated $\mathcal H(\rho)$ module. In this case, let $e(\rho)$ be the top degree of $Q_{\mathcal H(\rho)}H^*(BC)$. \\
{\bf (c)} \  If $G$ is a compact Lie group with $C=C(G)$,  let $\mathcal H(G)$ be the smallest Hopf algebra containing all of the $\mathcal H(\rho)$, where $\rho$ ranges over all representations of $G$, restricted to $C$, and let $e_{rep}(G)$ be the top degree of $Q_{\mathcal H(G)}H^*(BC)$.
\end{defns}

It is clear that for any representation $\rho$ of $G$, $$ \mathcal H(\rho) \subseteq \mathcal H(G) \subseteq \im(i^*),$$
so we learn the following.

\begin{prop} $e(G) \leq e_{rep}(G) \leq e(\rho)$.
\end{prop}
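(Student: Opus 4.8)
The plan is to reduce the statement to the elementary observation that enlarging a subalgebra can only shrink its module of indecomposables. First I would unwind the definitions: with $C = C(G)$ and $\rho$ a faithful representation of $G$, the three numbers $e(\rho)$, $e_{rep}(G)$, and $e(G)$ are, respectively, the top nonzero degrees of the graded $\F_p$--vector spaces $Q_{\mathcal H(\rho)}H^*(BC)$, $Q_{\mathcal H(G)}H^*(BC)$, and $Q_{\im(i^*)}H^*(BC)$. The last of these is the reading of $e(G)$ we want: since $i^* \colon H^*(BG) \to \im(i^*)$ is surjective, the augmentation ideal of $H^*(BG)$ and that of $\im(i^*)$ generate the same submodule of $H^*(BC)$, so $H^*(BC)\otimes_{H^*(BG)}\F_p = Q_{\im(i^*)}H^*(BC)$.

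Next I would isolate the purely formal lemma driving the proof: if $B \subseteq B'$ are graded connected subalgebras of a graded algebra $H$, then $B_{>0}\cdot H \subseteq B'_{>0}\cdot H$, and hence there is a natural surjection $Q_B H \twoheadrightarrow Q_{B'} H$; if both of these are finite dimensional, the top nonzero degree of $Q_{B'}H$ is therefore at most that of $Q_B H$. The finite dimensionality needed here is exactly what part (b) of the preceding definitions provides: a faithful $\rho$ realizes $C$ as a subgroup of a unitary group $U$, so $\mathcal H(\rho)$ contains the image of the polynomial algebra $H^*(BU)$, over which $H^*(BC)$ is a finitely generated module; thus $H^*(BC)$ is finitely generated over $\mathcal H(\rho)$, and a fortiori over the larger algebras $\mathcal H(G)$ and $\im(i^*)$, making all three $Q$'s finite dimensional. (Such a faithful $\rho$ exists by Peter--Weyl, which is also what makes $e_{rep}(G)$ well defined.)

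Finally I would apply the lemma to the chain of Hopf subalgebra inclusions $\mathcal H(\rho) \subseteq \mathcal H(G) \subseteq \im(i^*)$ recorded just before the statement; this gives surjections $Q_{\mathcal H(\rho)}H^*(BC) \twoheadrightarrow Q_{\mathcal H(G)}H^*(BC) \twoheadrightarrow Q_{\im(i^*)}H^*(BC)$ and hence $e(G) \leq e_{rep}(G) \leq e(\rho)$ immediately. I do not anticipate a genuine obstacle: the only point that requires care is bookkeeping the faithfulness of $\rho$, without which $\mathcal H(\rho)$ need not make $H^*(BC)$ a finitely generated module and the invariant $e(\rho)$ would not even be defined.
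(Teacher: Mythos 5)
Your proposal is correct and follows exactly the route the paper intends: the paper derives the proposition immediately from the chain of inclusions $\mathcal H(\rho) \subseteq \mathcal H(G) \subseteq \im(i^*)$, and you have simply filled in the routine verification (larger subalgebras give surjections on indecomposables, hence smaller top degrees, with finite dimensionality guaranteed by faithfulness of $\rho$). No discrepancy with the paper's argument.
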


Thus \conjref{chern conj} would follow immediately from the next conjecture, which just concerns Chern classes of representations of elementary abelian groups.

\begin{conj} \label{e(rho) conj} Let $C$ be an elementary abelian $p$--group of rank $c$.  If $\rho$ is a faithful $n$ dimensional complex representation of $C$, then $e(\rho) \leq 2n - c$.
\end{conj}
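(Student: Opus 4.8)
The plan is to reduce the statement to a purely combinatorial/invariant-theoretic fact about monomial ideals, and then to prove that fact by a weight-counting argument. Write $C = (\Z/p)^c$, so that $H^{\mathrm{even}}(BC)$ contains the polynomial ring $R = \F_p[y_1,\dots,y_c]$ (at $p=2$ this is all of $H^*(BC)$, and at odd primes the exterior part contributes only a bounded overhead that is absorbed by the $-c$, so I would first dispose of the odd-primary case by the same splitting trick used after \thmref{indec < e thm} and work with $R$). A faithful $n$--dimensional representation $\rho$ decomposes as $\rho = \chi_1 \oplus \dots \oplus \chi_n$ into one--dimensional characters $\chi_j \in C^{\#} \cong \F_p^c$, faithfulness meaning that the $\chi_j$ span $C^{\#}$. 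The total Chern class of $\chi_j$ is $1 + \ell_j$ where $\ell_j = \sum_i (\chi_j)_i\, y_i$ is the corresponding linear form in $R$. The Hopf algebra $\mathcal H(\rho)$ is then the sub-Hopf-algebra of $R$ generated by the elementary symmetric functions $c_k = \sigma_k(\ell_1,\dots,\ell_n)$, and $e(\rho)$ is the top degree of $R \otimes_{\mathcal H(\rho)} \F_p$ --- equivalently, since $R$ is free as a module over the polynomial subalgebra generated by a homogeneous system of parameters, the socle degree of the finite--dimensional ring $R/(c_1,\dots,c_n)$ (one must check $R$ is finite over $\F_p[c_1,\dots,c_n]$; that is exactly faithfulness, as it forces the $\ell_j$ to cut out only the origin).

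So the concrete target is: \emph{for linear forms $\ell_1,\dots,\ell_n$ spanning $(\F_p^c)^{\#}$, the top nonvanishing degree of $R/(\sigma_1(\ell),\dots,\sigma_n(\ell))$ is at most $2n - c$.} The key observation is that $\prod_{j=1}^n (1 + t\,\ell_j) = \sum_k \sigma_k(\ell) t^k$, so the ideal $(\sigma_1,\dots,\sigma_n)$ contains, for each single form $\ell_j$, all the power sums/Newton combinations, and in particular one can extract from the $c_k$'s the relations making each $\ell_j^{?}$ decomposable. More usefully: pick $c$ of the forms, say $\ell_1,\dots,\ell_c$, that form a basis of $(\F_p^c)^{\#}$; after a linear change of coordinates these are $y_1,\dots,y_c$ themselves. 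Then among the remaining forms $\ell_{c+1},\dots,\ell_n$ there are $n-c$ of them, and I would argue that modulo $(\sigma_1,\dots,\sigma_n)$ each variable $y_i$ satisfies a monic relation of controlled degree: the generating-function identity lets one solve, degree by degree, for $y_i^{m}$ in terms of lower-degree terms once $m$ exceeds the number of forms. Quantitatively, splitting the $n$ forms so that each basis direction $y_i$ ``receives'' one of the basic forms plus its share of the extra $n-c$ forms, a counting argument should give $y_i^{a_i} \in (\sigma_\bullet) + (\text{lower})$ with $\sum_i (a_i - 1) \le 2(n-c) + c = 2n - c$; the socle of a monomial complete intersection $R/(y_1^{a_1},\dots,y_c^{a_c})$ sits in degree $\sum (a_i - 1)$, and a standard filtration/comparison argument (the $\sigma$'s have the $y_i^{a_i}$ as ``leading terms'' under a suitable monomial order) then bounds the socle degree of $R/(\sigma_\bullet)$ by the same number.

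The main obstacle --- and the step I would spend the most care on --- is making the ``leading term'' bookkeeping honest: the $\sigma_k(\ell)$ are not themselves a monomial ideal, and one needs to choose a term order (or a deformation to the initial ideal) under which $\mathrm{in}(\sigma_1,\dots,\sigma_n) \supseteq (y_1^{a_1},\dots,y_c^{a_c})$ with the exponents $a_i$ adding up to at most $2n - c$, using only that the $\ell_j$ span. This is essentially a Gröbner-degeneration statement for ideals generated by products of linear forms; the subtlety is that a careless choice of which forms to assign to which coordinate can inflate the bound, and extremal configurations (e.g. $n = c$, all forms a basis, giving $R/(y_1,\dots,y_c)$ with socle degree $0 \le c$, versus $n$ large with forms concentrated in few directions) need to be checked to see that $2n-c$ is really the worst case. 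Once that combinatorial optimization is pinned down, the conclusion $e(\rho) \le 2n - c$, and hence (via the inequalities $e(G) \le e_{rep}(G) \le e(\rho)$ of the preceding Proposition together with \conjref{chern conj}'s reduction) the bound $e(G) \le 2n(G) - c(G)$, follows formally.
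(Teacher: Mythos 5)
First, note that the statement you are trying to prove is labeled a \emph{conjecture} in the paper and is \emph{not proved} there. The paper only reduces it to the further Conjecture~\ref{rho conj} (that $\HH(\rho) = \HH(\FF_{\rho})$), which is itself left open. So you cannot be "taking the same route as the paper"; your proposal is an attempt at an open problem.

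Unfortunately the proposed reduction contains a fatal misreading of the definition of $\HH(\rho)$, and the resulting target inequality is false. You write that $\HH(\rho)$ is ``the sub-Hopf-algebra of $R$ generated by the elementary symmetric functions $c_k$'' and then try to bound the socle degree of $R/(c_1,\dots,c_n)$ by $2n-c$. But $\HH(\rho)$ is the \emph{smallest Hopf algebra containing} the Chern classes, which is in general strictly larger than $\F_p[c_1,\dots,c_n]$ (which need not be a Hopf algebra at all). You do at least have the inclusion $\F_p[c_\bullet] \subseteq \HH(\rho)$, hence $e(\rho) \le \text{(socle degree of } R/(c_\bullet))$, so the strategy is logically coherent --- the problem is that this upper bound is too weak. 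Concretely, take $p=2$, $c=2$, $n=3$, with $\ell_1 = y_1$, $\ell_2 = y_2$, $\ell_3 = y_1 + y_2$ (a faithful $3$--dimensional representation of $(\Z/2)^2$). Then $\sigma_1 = 0$, $\sigma_2 = y_1^2 + y_1 y_2 + y_2^2$, $\sigma_3 = y_1 y_2 (y_1 + y_2)$; the quotient $\F_2[y_1,y_2]/(\sigma_2,\sigma_3)$ is a complete intersection with Hilbert series $(1+t)(1+t+t^2)$ in polynomial degree, hence socle in cohomological degree $6$, whereas $2n - c = 4$. So the socle degree of $R/(c_\bullet)$ exceeds $2n-c$. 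Meanwhile the Hopf condition forces $\HH(\rho)$ to be all of $\F_2[y_1,y_2]$ in this example (no proper full sub-Hopf algebra contains $\sigma_2$, since those meet the degree-two piece only in $\langle y_1^2, y_2^2\rangle$), so in fact $e(\rho) = 0 \le 4$ --- the conjecture holds, but precisely \emph{because} of the Hopf saturation that your approach discards. No Gr\"obner or initial-ideal bookkeeping on $(c_1,\dots,c_n)$ alone can recover this: you must work with the Hopf algebra generated by the $c_k$'s, which is where the paper's unproven Conjecture~\ref{rho conj} enters.
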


In turn, this conjecture would be consequence of a conjectural identification of the Hopf algebra $\mathcal H(\rho)$.  To describe this, and for later purposes, we digress to describe a natural parametrization of  the sub-Hopf algebras of a polynomial algebra.

\subsection{Sub-Hopf algebras of a polynomial algebra}

Let $S^*(V)$ be the symmetric algebra generated by a $\F_p$--vector space $V$.  If $y_1, \dots, y_c$ form a basis for $V$, then $S^*(V) = \F_p[y_1,\dots,y_c]$. We describe a natural parametrization of the {\em full} sub-Hopf algebras of $S^*(V)$: sub-Hopf algebras $\HH \subseteq S^*(V)$ having Krull dimension $c$.

We need the following notation: given a subspace $W < V$, $W^{(k)} \subset S^{p^k}(V)$ denotes the span of the $p^k$th powers of the elements in $W$.

\begin{defn} Suppose $\FF$ is a finite filtration of the $\F_p$--vector space $V$:
$$ V(0) \subseteq V(1) \subseteq \dots \subseteq V(n) = V.$$
Let $\HH(\FF) \subseteq S^*(V)$ be the Hopf algebra
$$ \mathcal H(\FF) = S^*(V(0) + V(1)^{(1)} + \dots + V(n)^{(n)}).$$
\end{defn}

\begin{prop} Filtrations of $V$ correspond bijectively to the full sub-Hopf algebras of $S^*(V)$, under the correspondence $\FF \rightsquigarrow \HH(\FF)$.
\end{prop}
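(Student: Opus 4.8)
The plan is to construct explicit inverse maps between filtrations of $V$ and full sub-Hopf algebras of $S^*(V)$, and to verify that $\HH(\FF)$ is indeed a full sub-Hopf algebra and that the two constructions are mutually inverse. First I would check that $\HH(\FF)$ as defined is genuinely a sub-Hopf algebra of Krull dimension $c$: since $V(0) + V(1)^{(1)} + \dots + V(n)^{(n)}$ is a graded subspace of $S^*(V)$ on which the coproduct of $S^*(V)$ restricts (each $p^k$th power is primitive in $S^*(V)$, being a $p$th power of a primitive, so the subspace spanned by these is a subcoalgebra of primitives), the symmetric algebra it generates is a sub-bialgebra, hence a sub-Hopf algebra; its Krull dimension is the dimension of the spanning space, which is $\dim V(0) + \dim(V(1)/V(0)) + \dots + \dim(V(n)/V(n-1)) = \dim V = c$, using that $W \mapsto W^{(k)}$ is injective (Frobenius is injective on $\F_p$-vector spaces, or rather the $p^k$-power map $V \to S^{p^k}(V)$ has trivial kernel) and that powers of different $p$-heights live in different degrees so cannot interfere. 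This gives a well-defined map $\FF \rightsquigarrow \HH(\FF)$ landing in full sub-Hopf algebras.

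Next I would build the reverse map. Given a full sub-Hopf algebra $\HH \subseteq S^*(V)$, one knows (Borel / Milnor–Moore structure theory for commutative connected Hopf algebras over $\F_p$) that $\HH$ is itself a polynomial algebra, and more precisely that $\HH$ has a system of generators of the form $w^{p^k}$ with $w \in V$. Concretely, let $P\HH$ denote the primitives of $\HH$; then $P\HH$ is a graded subspace of $S^*(V)$ concentrated in degrees $1, p, p^2, \dots$, and $\HH = S^*(P\HH)$ because $\HH$ is polynomial and primitively generated. For each $k \ge 0$, the degree-$p^k$ primitives of $S^*(V)$ are exactly $V^{(k)}$ (the span of $p^k$th powers), so $P\HH \cap S^{p^k}(V) = W_k^{(k)}$ for a uniquely determined subspace $W_k < V$. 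Define $V(k) = W_0 + W_1 + \dots + W_k$. I would then verify that $W_0 \subseteq W_1 \subseteq \dots$: if $w^{p^k} \in \HH$ then $(w^{p^k})^p = w^{p^{k+1}} \in \HH$ since $\HH$ is a subalgebra, so $W_k \subseteq W_{k+1}$, whence $V(0) \subseteq V(1) \subseteq \dots$ is a filtration; it exhausts $V$ because $\HH$ has full Krull dimension $c$, forcing the total dimension count $\sum \dim(V(k)/V(k-1)) = c = \dim V$. This produces a filtration $\FF_\HH$.

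Finally I would check the two constructions are inverse. Starting from $\FF$, the primitives of $\HH(\FF)$ in degree $p^k$ are precisely $V(k)^{(k)}$ (the other summands $V(j)^{(j)}$ with $j < k$ contribute primitives only in the lower degree $p^j$, and products of generators are non-primitive), so running the reverse construction recovers $W_k$ with $W_0 + \dots + W_k = V(k)$, i.e. $\FF_{\HH(\FF)} = \FF$. Conversely, starting from $\HH$, we have $\HH = S^*(P\HH)$ and $P\HH = \bigoplus_k W_k^{(k)}$; on the other hand $V(0) + V(1)^{(1)} + \dots = W_0 + (W_0+W_1)^{(1)} + \dots$, and since $W_j^{(k)} \subseteq W_k^{(k)}$ for $j \le k$ (as $W_j \subseteq W_k$), this sum equals $W_0 + W_1^{(1)} + W_2^{(2)} + \dots = P\HH$; hence $\HH(\FF_\HH) = S^*(P\HH) = \HH$. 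The main obstacle I anticipate is the structural input that a full sub-Hopf algebra of a polynomial algebra over $\F_p$ is again polynomial and admits generators that are $p$th-power iterates of degree-one elements — this is where one must invoke (or reprove in this graded-commutative setting) the classification of connected commutative Hopf algebras over a perfect field of characteristic $p$, together with the observation that a primitive element of $S^*(V)$ is necessarily a linear combination of $p^k$th powers of elements of $V$ for a single $k$ (by degree), i.e. the primitives of $S^*(V)$ in degree $p^k$ are exactly $V^{(k)}$. Once that is in hand, the bijection and its inverse are essentially bookkeeping about which subspace sits in which degree.
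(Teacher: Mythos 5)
Your proposal is correct and follows essentially the same route as the paper's (sketch) proof: both rest on the structure theorem that a full sub-Hopf algebra of $S^*(V)$ has the form $\F_p[y_1^{p^{j_1}},\dots,y_c^{p^{j_c}}]$ for a suitable basis, and both recover the filtration intrinsically by intersecting $\HH$ with the primitives $V^{(k)}$ in each degree (your $W_k^{(k)} = P\HH \cap S^{p^k}(V)$ is exactly the paper's $V(k)^{(k)} = \HH \cap V^{(k)}$). You simply carry out the bookkeeping for the two inverse correspondences more explicitly than the paper does.
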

\begin{proof}[Sketch proof]  If $\HH \subseteq S^*(V)$ is a full sub-Hopf algebra, then there is a basis $y_1, \dots, y_c$ of $V$, and natural numbers $j_1, \dots, j_c$, such that $\HH = \F_p[y_1^{p^{j_1}}, \dots, y_c^{p^{j_c}}]$.  Then $\HH = \HH(\FF)$, where the filtration $\FF$ of $V$ has $k$th subspace $V(k)$ equal to the span of the $y_i$ satisfying $j_i\leq k$.  More intrinsically, $V(k)^{(k)} = \HH \cap V^{(k)}$.
\end{proof}

\begin{defn}  If $\FF$ is the filtration $V(0) \subseteq \dots \subseteq V(n) = V$, let
$$ e(\FF) = \sum_{k = 0}^n c_k(\FF)(2p^k-1),$$
where $c_k(\FF)$ is the rank of $V(k)/V(k-1)$.
\end{defn}

With this definition, if $C$ is an elementary abelian $p$--group, $V = \beta(H^1(BC)) \subseteq H^2(BC)$, and $\FF$ is a filtration of $V$, then $e(\FF)$ is the top degree of a generator of $H^*(BC)$, viewed as a $\HH(\FF)$--module.

We now return to our discussion of \conjref{e(rho) conj}.  So suppose $\rho$ is a faithful $n$ dimensional complex representation of $C$, where $C$ has rank $c$.  This will be a sum of line bundles, possibly with multiplicities, and so will correspond to the following data:
\begin{itemize}
\item A finite set of distinct elements $v_1, \dots, v_m \in V$ which span $V$.
\item Multiplicities $n_1, \dots, n_m \in \mathbb N$ such that $n_1 + \dots + n_m = n$.
\end{itemize}
From this data, we define a filtration $\FF_{\rho}$ of $V$ by letting $V(k)$ be the span of the $v_j$ such that $p^{k+1}$ does not divide $n_j$.

\begin{lem}$\HH(\rho) \subseteq \HH(\FF_{\rho})$.
\end{lem}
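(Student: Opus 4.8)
The plan is to unwind the two filtration-to-Hopf-algebra constructions and show that every Chern class of $\rho$ already lies in $\HH(\FF_\rho)$, which by minimality of $\HH(\rho)$ gives the claimed inclusion. First I would record what $\HH(\FF_\rho)$ is explicitly. Choosing a basis of $V$ adapted to the filtration $\FF_\rho$, the proposition of the previous subsection gives $\HH(\FF_\rho) = S^*(V(0) + V(1)^{(1)} + \dots + V(n)^{(n)})$, and the intrinsic description $V(k)^{(k)} = \HH(\FF_\rho)\cap V^{(k)}$ shows that $v^{p^k} \in \HH(\FF_\rho)$ precisely when $v \in V(k)$, i.e.\ when $p^{k+1}\nmid n_j$ for the relevant generator $v = v_j$. (More generally, a monomial in the $p^k$-th powers of elements of $V(k)$ lies in $\HH(\FF_\rho)$.) Since $\HH(\rho)$ is by definition the smallest Hopf algebra containing all Chern classes of $\rho$, it suffices to check $c_i(\rho) \in \HH(\FF_\rho)$ for all $i$.

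Next I would compute the Chern classes. Writing $\rho = \bigoplus_{j=1}^m n_j L_j$ where $L_j$ is the line bundle with $c_1(L_j) = v_j$, the total Chern class is $c(\rho) = \prod_{j=1}^m (1 + v_j)^{n_j}$, so $c_i(\rho)$ is the degree-$i$ part of this product, a polynomial in the $v_j$ with integer (hence $\F_p$) coefficients. The key arithmetic input is that the coefficient of a monomial $\prod_j v_j^{a_j}$ in $(1+v_j)^{n_j}$ is $\prod_j \binom{n_j}{a_j}$, and by Lucas's theorem $\binom{n_j}{a_j} \not\equiv 0 \pmod p$ forces, for each base-$p$ digit, $a_j$'s digit $\le n_j$'s digit; in particular if $p^{k+1} \mid n_j$ then $\binom{n_j}{a_j}$ vanishes mod $p$ unless $p^{k+1}\mid a_j$ as well. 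Thus in $H^*(BC;\F_p)$ each surviving monomial $\prod_j v_j^{a_j}$ has the property that, writing $a_j = \sum_k d_{j,k} p^k$ in base $p$, every digit $d_{j,k}>0$ only when $v_j \in V(k)$ (equivalently $p^{k+1}\nmid n_j$). Grouping the factor $v_j^{a_j} = \prod_k (v_j^{p^k})^{d_{j,k}}$ and noting $v_j^{p^k} \in V(k)^{(k)} \subset \HH(\FF_\rho)$ whenever $d_{j,k}>0$, we conclude each such monomial, hence $c_i(\rho)$, lies in $\HH(\FF_\rho)$. Therefore $\HH(\rho)\subseteq\HH(\FF_\rho)$.

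The main obstacle is purely bookkeeping: making the base-$p$ digit comparison clean. One must be careful that a monomial $\prod_j v_j^{a_j}$ appearing in $c_i(\rho)$ need not factor through the $p^k$-th powers in an obvious way --- the point is that over $\F_p$ the offending monomials (those with some digit $d_{j,k}>0$ but $v_j\notin V(k)$) have vanishing coefficient by Lucas, so they simply do not appear. I would state this as a short lemma (``if $\binom{n_j}{a_j}\not\equiv 0\pmod p$ for all $j$, then $\prod_j v_j^{a_j}\in\HH(\FF_\rho)$'') and then the theorem is immediate. A minor subtlety worth a sentence: one should check that the collection of Chern classes is closed enough that ``smallest Hopf algebra containing the Chern classes'' is contained in any Hopf algebra containing them --- but that is automatic from the definition of $\HH(\rho)$ as an intersection, so nothing extra is needed.
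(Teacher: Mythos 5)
Your argument is correct, and it arrives at the same conclusion by a slightly more granular route than the paper. The paper first groups the factors of the total Chern class by the $p$-adic valuation $k_j$ of $n_j$ and applies the Frobenius identity $(1+v_j)^{p^{k_j}} = 1 + v_j^{p^{k_j}}$ in characteristic $p$ to rewrite
$ch(\rho) = \prod_j \bigl(1 + v_j^{p^{k_j}}\bigr)^{n_j/p^{k_j}}$,
after which it is visibly a polynomial in the $v_j^{p^{k_j}} \in V(k_j)^{(k_j)} \subseteq \HH(\FF_\rho)$. You instead expand monomial by monomial and invoke Lucas's theorem to kill the coefficients of the offending monomials, then group each surviving $v_j^{a_j}$ into $p^k$-th powers via the base-$p$ digits of $a_j$. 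These are really the same idea (Lucas is the coefficient-level shadow of the Frobenius factorization), and both are complete; the paper's version is a little slicker because it never needs to mention coefficients at all. One small point of hygiene worth a clause in a written-up version: the monomials $\prod_j v_j^{a_j}$ live a priori in the free polynomial ring on symbols $v_1,\dots,v_m$ and may collide after mapping to $S^*(V)$ (the $v_j$ span $V$ but need not be independent); this causes no trouble since $\HH(\FF_\rho)$ is an $\F_p$-subalgebra and hence closed under the linear combinations that arise, but the argument is most cleanly phrased as ``every formal monomial with nonzero Lucas coefficient maps into $\HH(\FF_\rho)$, hence so does the sum.''
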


\begin{proof}
Let $ch(\rho)$ denote the total Chern class.  We will have
$$ ch(\rho) = \prod_{j=1}^m (1+v_j)^{n_j} = \prod_k \prod_{v_j \in V(k)-V(k-1)} (1+v_j^{p^k})^{n_j/p^k}.$$
As $v_j^{p^k} \in \HH(\FF_{\rho})$ for $v_j \in V(k)-V(k-1)$, we see that all the homogenous components of $ch(\rho)$ are in $\HH(\FF_{\rho})$ as well.
\end{proof}

We conjecture equality in the last lemma.

\begin{conj} \label{rho conj} $\HH(\rho) = \HH(\FF_{\rho})$.
\end{conj}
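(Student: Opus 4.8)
The plan is to establish the reverse inclusion $\HH(\FF_\rho) \subseteq \HH(\rho)$, since the lemma just proved gives the other direction. Concretely, I would try to show that for each $k$ and each $v_j \in V(k) - V(k-1)$, the $p^k$th power $v_j^{p^k}$ lies in $\HH(\rho)$, the smallest sub-Hopf algebra of $H^*(BC)$ containing the Chern classes of $\rho$. Once every such generator $v_j^{p^k}$ is shown to lie in $\HH(\rho)$, we are done, because by the parametrization proposition in the subsection on sub-Hopf algebras of a polynomial algebra, $\HH(\FF_\rho) = S^*\big(\sum_k V(k)^{(k)}\big)$ is generated as an algebra precisely by these powers.

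The first step is a reduction. Grouping the line bundles by $p$-adic valuation of their multiplicities, write $\rho = \bigoplus_k \rho_k$ where $\rho_k$ collects the $v_j$ with $v_j \in V(k) - V(k-1)$; after factoring out the maximal $p$-power, $\rho_k$ corresponds to a list of distinct elements of $V(k)$ with multiplicities not all divisible by $p$, each scaled by $p^k$. The total Chern class factors as $ch(\rho) = \prod_k ch(\rho_k)$, and the key computation in the lemma shows $ch(\rho_k) = \prod_{v_j \in V(k) - V(k-1)} (1 + v_j^{p^k})^{n_j/p^k}$, a polynomial in the $v_j^{p^k}$. So it suffices to treat each $k$ separately and show: given distinct $w_1, \dots, w_t \in H^2(BC)$ (here $w_i = v_i^{p^k}$, but since Frobenius is injective on $S^*(V)$ the relations among them are the same as among the $v_i$) and multiplicities $a_1, \dots, a_t$ with $\gcd(a_1,\dots,a_t,p) = 1$, the smallest sub-Hopf algebra containing the elementary symmetric functions of the multiset $\{w_i^{[a_i]}\}$ (with $w_i$ repeated $a_i$ times) already contains each $w_i$. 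The second step is this core claim. I would approach it by a Newton's-identity/recursion argument: the power sums $\sum_i a_i w_i^n$ are polynomials in the Chern classes, hence lie in the Hopf algebra; since the Hopf algebra is closed under the "Verschiebung"-type operations dual to Frobenius on $S^*(V)$, one can try to isolate individual $w_i$ using that the $w_i$ are distinct (so Vandermonde-type arguments apply over $\F_p$) combined with the coprimality of the $a_i$ — the point being that $\gcd(a_i,p)=1$ lets us invert the relevant coefficients modulo $p$ to solve for $w_i$.

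The main obstacle I expect is precisely the characteristic-$p$ linear algebra in that second step: distinctness of the $w_i$ gives a Vandermonde matrix that is invertible over $\F_p$ only when we also control how power sums interact with Frobenius, and the naive Newton's identities degenerate mod $p$. The honest way through is likely to use the Hopf-algebra structure seriously — the sub-Hopf algebras of $S^*(V)$ are exactly the $S^*(W)$ with $W$ a sum of Frobenius-twisted subspaces, so membership of $v_j^{p^k}$ in $\HH(\rho)$ is equivalent to a statement about which twisted subspaces are "detected" by the span of the Chern classes of $\rho$, and this should be checkable by pairing against the primitives. I would first verify the conjecture in the rank-one and rank-two cases by hand to calibrate the argument, then attempt the general Hopf-theoretic formulation; if the full strength resists, the fallback is to prove the weaker numerical consequence $e(\rho) \leq 2n - c$ (Conjecture \ref{e(rho) conj}) directly by bounding $e(\FF_\rho)$ against $\sum_j n_j = n$, using that $c_k(\FF_\rho)$ counts $v_j$ with $p^{k+1} \nmid n_j$ while each such $v_j$ contributes at least $p^k$ to $n$.
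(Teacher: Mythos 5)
First, a point of calibration: the paper does not prove this statement at all --- it is stated as \conjref{rho conj} and left open; the only thing proved in the text is the preceding lemma, namely the inclusion $\HH(\rho) \subseteq \HH(\FF_{\rho})$. So your proposal is being measured against an open problem, and as written it does not close it. You correctly identify that the entire content of the conjecture is the reverse inclusion, and your reduction is sound as far as it goes: $ch(\rho)=\prod_k ch(\rho_k)$, and $\HH(\FF_\rho)$ is generated as an algebra by the powers $v_j^{p^{k_j}}$ with $p^{k_j}$ the exact power of $p$ dividing $n_j$. But everything then hinges on your ``core claim'' --- that the smallest sub-Hopf algebra containing the elementary symmetric functions of distinct classes $w_1,\dots,w_t$ taken with multiplicities $a_i$ not all divisible by $p$ already contains each $w_i$ --- and for this you offer only candidate techniques (Newton's identities, Vandermonde matrices, Verschiebung) together with the frank admission that the obvious versions of these degenerate in characteristic $p$. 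That is precisely where the difficulty lives; naming the obstacle is not the same as overcoming it, so what you have is a research plan rather than a proof.

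Your fallback is also not viable as stated. The known inclusion $\HH(\rho)\subseteq\HH(\FF_\rho)$ gives a \emph{surjection} $Q_{\HH(\rho)}H^*(BC)\twoheadrightarrow Q_{\HH(\FF_\rho)}H^*(BC)$, hence only the inequality $e(\rho)\geq e(\FF_\rho)$. Bounding $e(\FF_\rho)$ by $2n-c$ (which the paper already observes is ``not hard to check'') therefore says nothing about $e(\rho)$; to get $e(\rho)\leq 2n-c$, i.e.\ \conjref{e(rho) conj}, you need the reverse inclusion of Hopf algebras, which is exactly the unproved direction. If you want to make progress, the most promising of your ideas is the last one: since every sub-Hopf algebra of $S^*(V)$ is of the form $\HH(\FF')$ for some filtration $\FF'$, it suffices to show that any filtration $\FF'$ with $\HH(\FF')$ containing all the Chern classes of $\rho$ must satisfy $\FF'\supseteq\FF_\rho$ levelwise; testing this against the top Chern class $\prod_j v_j^{n_j}$ and low-rank cases (as in the rank-two, multiplicity-one example, where the second Chern class $v_1v_2$ forces $V'(0)$ to be all of $V$) is the right way to calibrate, but the general argument remains to be found.
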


As the estimate $e(\FF_{\rho})\leq 2n-c$ is not hard to check, this conjecture implies \conjref{e(rho) conj}, and thus \conjref{chern conj}.

\begin{rem} \label{rep remark} Note that, for any $E < G$, $n(C_G(E)) \leq n(G)$ and $c(C_G(E)) \geq c(G)$.  Thus, if \conjref{chern conj} were true, we could deduce
$$  d^{\U}(G) \leq \max_{E<G} \{ e(C_G(E)) \} \leq \max_{E<G} \{ 2n(C_G(E))-c(C_G(E)) \} \leq 2n(G)-c(G).$$
\end{rem}

\subsection{Proofs of \thmref{d(P) thm} and \thmref{perm thm} assuming \thmref{monotone thm}.}

Here we assume \thmref{monotone thm}, which says that if $P$ is a $p$-group, and $Q<P$, then $e(Q) \leq e(P)$, and deduce \thmref{d(P) thm} and \thmref{perm thm}.

\begin{proof}[Proof of \thmref{d(P) thm}]  This is immediate:
$\displaystyle  d^{\U}(P) \leq \max_{E<P} \{ e(C_P(E)) \} \leq e(P)$.
\end{proof}

\begin{proof}[Proof of \thmref{perm thm}]  Suppose a $p$--group $P$ acts faithfully on a set $S$ with no fixed points. We wish to show that $e(P) \leq |S|/2 - |S/P|$ when $p=2$, and $e(P) \leq 2|S|/p - |S/P|$ when $p$ is odd.

Note that $S/P$ is the set of orbits of $S$, so $S$ has a decomposition into orbits
$$ S = \coprod_{i=1}^{|S/P|} S_i,$$
with $|S_i| = p^{r_i}$, and each $r_i\geq 1$.  Then $P$ admits an embedding
$$P \subseteq \prod_{i=1}^{|S/P|} W(r_i),$$
where $W(r)$ denotes the Sylow subgroup of the symmetric group $\Sigma_{p^r}$.

Assuming \thmref{monotone thm}, we would then have the bound
$$ e(P) \leq \sum_{i=1}^{|S/P|} e(W(r_i)).$$
The next proposition will thus complete the proof of \thmref{perm thm}
\end{proof}

\begin{prop} \label{e(W) prop} When $p=2$, $e(W(r)) = 2^{r-1} - 1$.  When $p$ is odd, $e(W(1)) = 0$, and, for $r\geq 1$, then $e(W(r)) = 2p^{r-1}-1$.
\end{prop}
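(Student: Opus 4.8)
The plan is to compute $e(W(r))$ by using the iterated wreath product structure $W(r) = W(r-1) \wr \Z/p = W(r-1)^p \rtimes \Z/p$, together with the formula $e(Q) = \sum(a_i - 1)$ expressing $e$ in terms of the type $[a_1,\dots,a_c]$ of the group, i.e.\ in terms of the Hopf subalgebra $\im(i^*) \subseteq H^*(BC(Q))$. First I would identify $C = C(W(r))$: for the wreath product $W(r-1) \wr \Z/p$, the centre of $W(r)$ is the diagonal copy of the centre of $W(r-1)$ (the $\Z/p$ factor acts by permuting the $p$ coordinates and so kills everything off the diagonal), so $c(W(r)) = c(W(r-1)) = \dots = c(W(1))$. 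Since $W(1) = \Z/p$, we get $c(W(r)) = 1$ for all $r \geq 1$; thus $e(W(r)) = a_1 - 1$ where $a_1$ is the single entry of the type, and the whole problem reduces to computing the type of $W(r)$, i.e.\ determining $\im(i^*)$ as a subalgebra of $H^*(B\Z/p)$ (polynomial on one generator when $p=2$, and $\Lambda(x)\otimes\F_p[y]$ when $p$ is odd).

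The key computational input is the known structure of $H^*(BW(r))$ restricted to its centre. I would use the natural faithful permutation representation: $W(r)$ acts faithfully on $p^r$ points, hence embeds in $U(p^r)$, and more usefully, $W(r) = W(r-1) \wr \Z/p$ acts on $H^*(BW(r-1)^p) = H^*(BW(r-1))^{\otimes p}$ through the $\Z/p$-action permuting tensor factors. The restriction to the central $\Z/p \subset W(r)$ factors through the transfer/Evens-norm type construction: a class $z$ generating the top of $\im(i^*)$ for $W(r-1)$ has its $p$-fold ``norm'' (product of the $p$ conjugates) landing in $H^*(BW(r))$ and restricting to $C$ as the $p$-th power of the generator of $\im(i^*)$ for $W(r-1)$. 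This shows $\im(i^*)_{W(r)} \subseteq (\im(i^*)_{W(r-1)})^{(1)}$ in the notation of the sub-Hopf-algebra parametrization (Frobenius twist), giving $a_1(W(r)) \geq p\cdot a_1(W(r-1))$ in the $p=2$ case and the analogous statement for odd $p$. For the reverse inclusion I would use that $W(r)$ does not split off a $\Z/p$ summand and that no representation can do better — most cleanly, one checks directly in low degrees that the restriction map to $H^*(BC)$ has image exactly $\F_p[z^{p^{r-1}}]$ (resp.\ the odd-primary analogue) by an explicit induction, the base case $W(1) = \Z/p$ giving $\im(i^*) = H^*(B\Z/p)$, hence $a_1 = 2$ for $p=2$ (so $e = 1 = 2^0 - 1$) and, for $p$ odd, $e(W(1)) = 0$ since then $\im(i^*)$ is all of $\Lambda(x)\otimes\F_p[y]$ and the type is $[1]$.

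Assembling: for $p=2$, the induction gives $\im(i^*)_{W(r)} = \F_2[x^{2^{r-1}}]$, so the type is $[2^{r-1}]$ and $e(W(r)) = 2^{r-1} - 1$. For $p$ odd, the first wreath step is special because $W(1) = \Z/p$ contributes type $[1]$ (no polynomial truncation, $e=0$); but $W(2) = \Z/p \wr \Z/p$ already has $C = \Z/p$ with $\im(i^*)$ the subalgebra generated by $y^p$ (the norm of $y \in H^2(B\Z/p)$), hence type $[2p]$ and $e(W(2)) = 2p - 1 = 2p^{2-1} - 1$; then for $r \geq 2$ each further wreath step multiplies the truncation exponent by $p$, giving type $[2p^{r-1}]$ and $e(W(r)) = 2p^{r-1} - 1$ for all $r \geq 1$ (the formula being correct at $r=1$ only because $2p^0 - 1 = 1 \neq 0$ — wait, this forces me to state the case $r=1$ separately, exactly as the proposition does). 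The main obstacle I anticipate is the reverse inclusion in the inductive step: showing $\im(i^*)_{W(r)}$ is not larger than the norm-generated subalgebra, i.e.\ that there is no ``extra'' class in $H^*(BW(r))$ restricting non-trivially to a low power of the central generator. I would handle this either by a dimension count in $H^*(BW(r))$ in the relevant low degrees (using the Serre spectral sequence of $BW(r-1)^p \to BW(r) \to B\Z/p$), or by invoking the known complete computation of $H^*(BW(r))$ due to Nakaoka; the cleanest route is probably to observe that a generator of $\im(i^*)$ in degree $d < 2p^{r-1}$ would, by Quillen's detection via elementary abelians together with the inductive hypothesis applied to all centralizers, violate monotonicity or the rank-one conclusion for $C(W(r))$.
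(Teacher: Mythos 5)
Your outline matches the paper's proof: identify $C(W(r)) \simeq \Z/p$ as the diagonal centre of the iterated wreath product and determine $\im(i^*) \subseteq H^*(BC(W(r)))$ by induction on $r$, the crux being that the image is exactly the norm-generated subalgebra. Two corrections and one substantive remark. First, your $p=2$ base case is miscomputed: $\im(i^*) = H^*(B\Z/2) = \F_2[x]$ gives type $[1]$ and $e(W(1)) = 0 = 2^{1-1}-1$; your ``$a_1 = 2$, so $e = 1 = 2^0-1$'' is wrong twice over, and inconsistent with the general formula you state afterwards. Second, the norm argument yields $(\im(i^*)_{W(r-1)})^{(1)} \subseteq \im(i^*)_{W(r)}$, the reverse of the containment you wrote, hence the \emph{upper} bound $e(W(r)) \leq 2p^{r-1}-1$; your closing paragraph does, however, correctly identify the remaining containment as the hard step. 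For that step the paper uses precisely your spectral-sequence/Nakaoka option: restriction $H^*(BW(r+1)) \to H^*(BW(r)^p)^{\Z/p}$ is surjective, so $\im(i(r+1)^*)$ is exactly the image of $\F_p[y_1^{p^{r-1}},\dots,y_p^{p^{r-1}}]^{\Z/p}$ under the diagonal map $y_i \mapsto y$, which is identified as $\F_p[y^{p^{r}}]$ using that the image is a sub-Hopf algebra (for $p$ odd and $r=2$ one checks by hand that $x$ and $y$ are not hit, only the norm $y_1\cdots y_p \mapsto y^p$). By contrast, your monotonicity fallback does not close the gap as stated: the natural subgroups fall short, e.g.\ $e(W(r-1)^p) = 2p^{r-1}-p$ and $e(\Z/p^r) = 1$ for $r \geq 2$, so monotonicity only gives $e(W(r)) \geq 2p^{r-1}-p$, not the required $2p^{r-1}-1$.
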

\begin{proof}  We begin by identifying $C(r) = C(W(r))$.  We claim that $C(r) \simeq \Z/p$.  This is easily proved by induction on $r$, as $W(r+1)$ is the semidirect product
$$ W(r+1) = W(r)^p \rtimes \Z/p.$$
so that
$$C(r+1) = (C(r)^p)^{\Z/p},$$
the diagonal copy of $C(r)$ in $C(r)^p$.

Now we determine $\im(i(r)^*) \subset H^*(BC(r))$, where $i(r): C(r) \ra W(r)$ is the inclusion.

The case when $r=1$ is elementary: $C(1) = W(1) = \Z/p$, so $\im(i(1)^*) = H^*(B\Z/p)$ and  $e(W(1)) = 0$ for all primes $p$.

To proceed by induction, we observe that the inclusions
$$ C(r+1) \ra C(r)^p \ra W(r)^p \ra W(r+1)$$
induce a factorization of $i(r+1)^*$ as
$$ H^*(BW(r+1)) \twoheadrightarrow H^*(BW(r)^p)^{\Z/p} \xra{{(i(r)^p)}^*} H^*(BC(r)^p)^{\Z/p} \ra H^*(BC(r+1)),$$
with the first map epic as indicated.

Now let $p$ be odd.  Identifying $H^*(BC(r))$ with $\Lambda(x) \otimes \F_p[y]$, we prove by induction that, for $r \geq 2$,  $\im(i(r)^*) = \F_p[y^{p^{r-1}}]$ so that $e(W(r)) = 2p^{r-1}-1$.

The case when $r=2$ is slightly special: $\im(i(2)^*)$ will be the image of
$$  (\Lambda(x_1,\dots,x_p) \otimes \F_p[y_1,\dots,y_p])^{\Z/p} \ra \Lambda(x) \otimes \F_p[y]$$
under the map induced by sending each $x_i$ to $x$ and $y_i$ to $y$.  Recall also that this image will be a Hopf algebra.  As $y^p$ is the image of the invariant $y_1\cdots y_p$, while $x$ and $y$ are easily checked to not be in this image, we see that $\im(i(2)^*) = \F_p[y^p]$.

Assume by induction that $\im(i(r)^*) = \F_p[y^{p^{r-1}}]$.  Then, reasoning as above,
$$\im(i(r+1)^*) = \im\{ \F_p[y_1^{p^{r-1}}, \dots, y_p^{p^{r-1}}]^{\Z/p} \ra \F_p[y]\} = \F_p[y^{p^{r}}].$$

The case when $p=2$ is similar. Identifying $H^*(BC(r))$ with $\F_2[x]$, one proves by induction that, for $r \geq 1$,  $\im(i(r)^*) = \F_2[x^{2^{r-1}}]$ so that $e(W(r)) = 2^{r-1}-1$.

\end{proof}

\subsection{Reduction of \thmref{monotone thm} to invariant theory}

We begin the proof of \thmref{monotone thm}.  Our goal is to show that, if $Q$ is a subgroup of a $p$--group $P$, then $e(Q) \leq e(P)$.  Thus we need to somehow compare the image of the restriction
$$ H^*(BP) \ra H^*(BC(P))$$
to the image of the restriction
$$ H^*(BQ) \ra H^*(BC(Q)).$$

We make some first reductions.

First of all, by induction of the index of $Q$ in $P$, we can assume that $Q$ has index $p$, and thus will be normal in $P$. Then $\Z/p \simeq P/Q$ will act on $H^*(BQ)$ and also on $C(Q)$, with $C(Q)^{\Z/p} = C(P) \cap Q$.

Next, suppose that $C(P)$ is not contained in $Q$.  Then there would exist a central element $\sigma \in P$ of order $p$, not in $Q$.  It follows easily that then $\langle \sigma \rangle \times Q = P$, and we conclude that  $e(P) = e(Q)$.

Thus we will assume that $C(P)$ is contained in $Q$.  Suppose $P$ admits a direct product decomposition $P = \langle \sigma \rangle \times P_1$, with $\sigma$ of order $p$.  Then $\sigma$ would be contained in $C(P)$ and thus $Q = \langle \sigma \rangle \times Q_1$ with $Q_1 = P_1 \cap Q$.  Then $e(P) = e(P_1)$ and $e(Q) = e(Q_1)$.

We are reduced to needing to prove that $e(Q) \leq e(P)$ under the following assumptions:
\begin{itemize}
\item $Q$ is normal of index $p$, so $\Z/p \simeq P/Q$ acts on both $H^*(BQ)$ and $C = C(Q)$.
\item  $C(P) = C^{\Z/p}$.
\item $P$ has no nontrivial elementary abelian direct summands.
\end{itemize}

In this situation, the restriction map $H^*(BP) \ra H^*(C(P))$ factors
$$ H^*(BP) \ra H^*(BQ)^{\Z/p} \ra H^*(BC)^{\Z/p} \hra H^*(BC) \era H^*(BC^{\Z/p}),$$
and the last assumption tell us that the image lands in the part of $H^*(BC^{\Z/p})$ generated by $\beta(H^1(BC^{\Z/p}))$.

Let $V$ denote $\beta(H^1(BC)) \subseteq H^2(BC)$. As $V$ is naturally isomorphic to the dual of $C$, it can be viewed as a $\Z/p$--module.  Let $V_{\Z/p}$ denote the $\Z/p$--coinvariants $V/\langle x - \sigma x :  x\in V \rangle$, where $\sigma$ generates $\Z/p$.  The part of $H^*(BC^{\Z/p})$ generated by $\beta(H^1(BC^{\Z/p}))$ identifies with $S^*(V_{\Z/p})$.

As the image of $H^*(BQ) \ra H^*(BC) \era S^*(V)$ is a Hopf algebra, it must be the Hopf algebra $\HH(\FF)$ associated to a filtration $\FF$ of $V$, and $e(Q) \leq e(\FF)$.

As the map $H^*(BQ) \ra S^*(V)$ is $\Z/p$--equivariant, $\HH(\FF)$ is a sub-$\Z/p$-module of $S^*(V)$.  It follows that the filtration $\FF$ will be preserved by the $\Z/p$ action on $V$.

From our observations above, the image of $H^*(BP) \ra H^*(C(P))$ will be contained in the image of
$$ \HH(\FF)^{\Z/p} \hra S^*(V)^{\Z/p} \hra S^*(V) \era S^*(V_{\Z/p}).$$

As $e(Q) \leq e(\FF)$, we will be able to deduce that $e(Q) \leq e(P)$ if we can solve the following problem in invariant theory.

\begin{prob} \label{invariant problem} Given a filtration $\FF$ of a $\Z/p$--module $V$, find a filtration $\FF_{\Z/p}$ of $V_{\Z/p}$ such that
\begin{itemize}
\item  The image of $\HH(\FF)^{\Z/p} \ra S^*(V_{\Z/p})$ is contained in $\HH(\FF_{\Z/p})$, and
\item $e(\FF) \leq e(\FF_{\Z/p}).$
\end{itemize}
\end{prob}

In the next section we find such a filtration $\FF_{\Z/p}$: see \thmref{filtered inv thm}.

\section{New results in invariant theory} \label{inv section}

In this section $\FF$ is a filtration of an $\F_p[\Z/p]$--module $V$,
$$V(0) \subseteq V(1) \subseteq \dots \subseteq V(n) = V,$$
and we wish to understand the image of the composite
$$ \HH(\FF)^{\Z/p} \hra S^*(V)^{\Z/p} \hra S^*(V) \era S^*(V_{\Z/p}),$$
with our goal to solve \probref{invariant problem}.
Throughout we let $\sigma$ be a generator for $\Z/p$.

\subsection{$\Z/p$--modules}

The modular representation theory of $\Z/p$ is quite tame.  There are $p$ indecomposable $\F_p[\Z/p]$--modules, $V_1, \dots, V_p$, where $V_i$ has dimension $i$.  An explicit model for $V_i$ is the vector space with basis $x_1,\dots,x_i$ with
\begin{equation*}
\sigma x_j =
\begin{cases}
x_j + x_{j-1} & \text{if } 1 < j \leq i \\ x_1 & \text{if } j=1.
\end{cases}
\end{equation*}
A general  $\F_p[\Z/p]$--module $V$ decomposes as a direct sum
$$ V \simeq m_1V_1 \oplus m_2 V_2 \oplus \dots \oplus m_p V_p.$$
We say that $V$ is {\em trivial free} if $m_1 = 0$.

We let $rad(V)$ and $soc(V)$ be the radical and socle of a module $V$.  Thus $soc(V) = V^{\Z/p}$ and $V/rad(V) = V_{\Z/p}$.  In the usual way, we define $soc(V) \subset soc^2(V) \subset \dots$ and $rad(V) \supset rad^2(V) \supset \dots$.

The submodule $m_1V_1$ in a decomposition of $V$ can be regarded as the image of a section of the quotient map $soc(V) \era (soc(V) + rad(V))/rad(V)$. Thus $V$ is trivial free precisely when $soc(V) \subset rad(V)$, or equivalently, when the composite $V^{\Z/p} \hra V \era V_{\Z/p}$ is zero.

\subsection{The case when the filtration is trivial}

Given a $\Z/p$--module $V$, a special case of our general problem is to understand the image of
$$ S^*(V)^{\Z/p} \hra S^*(V) \era S^*(V_{\Z/p}).$$

We remark that, in spite of the simple classification of modules $V$, a complete calculation of  $S^*(V)^{\Z/p}$ is not known in all cases, and is the subject of much research.  Even so, we prove the following theorem.

\begin{thm} \label{inv thm}  If $V = W \oplus U$, where $W$ is trivial and $U$ is trivial free, the image of $S^*(V)^{\Z/p} \ra S^*(V_{\Z/p})$ is $S^*(W \oplus U_{\Z/p}^{(1)})$.
\end{thm}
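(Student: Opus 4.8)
The plan is to reduce everything to the case $V = U$ trivial free and then analyze $S^*(U)^{\Z/p}$ directly via its low-degree generators. First I would observe that the tensor decomposition $S^*(V) = S^*(W) \otimes S^*(U)$ is $\Z/p$-equivariant with $\Z/p$ acting trivially on the first factor, so $S^*(V)^{\Z/p} = S^*(W) \otimes S^*(U)^{\Z/p}$, and similarly $S^*(V_{\Z/p}) = S^*(W) \otimes S^*(U_{\Z/p})$ since $V_{\Z/p} = W \oplus U_{\Z/p}$. Under these identifications the map in question is $\mathrm{id}_{S^*(W)} \otimes (S^*(U)^{\Z/p} \to S^*(U_{\Z/p}))$, so it suffices to prove that the image of $S^*(U)^{\Z/p} \to S^*(U_{\Z/p})$ is $S^*(U_{\Z/p}^{(1)})$ when $U$ is trivial free. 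The containment $\supseteq$ is the easy direction: for any $v \in U$, the element $N(v) = v \cdot \sigma v \cdots \sigma^{p-1} v$ is $\Z/p$-invariant, and modulo $\mathrm{rad}(S^*(U))$ — the ideal generated by $\{w - \sigma w : w \in U\}$ — we have $N(v) \equiv \bar v^{\,p}$, which is exactly the image of a generator of $U_{\Z/p}^{(1)} \subset S^{p}(U_{\Z/p})$. Since the $\bar v^{\,p}$ as $v$ ranges over $U$ span $U_{\Z/p}^{(1)}$ and these norm elements all lie in $S^*(U)^{\Z/p}$, the image contains the subalgebra $S^*(U_{\Z/p}^{(1)})$.

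The reverse containment $\subseteq$ is the real content. I would argue that the image is contained in the subalgebra generated by homogeneous elements of degree divisible by $p$, which forces it into $S^*(U_{\Z/p}^{(1)})$ since $U_{\Z/p}^{(1)}$ lives in degree $p$ and $(U_{\Z/p})$ generates everything. Concretely: let $f \in S^*(U)^{\Z/p}$ be homogeneous of degree $d$, and let $\bar f$ be its image in $S^*(U_{\Z/p})$; I want to show $\bar f$ lies in the subalgebra generated by $p$th powers of linear forms. The key structural input is that $\mathrm{rad}(S^*(U)) = (w - \sigma w : w \in U)$ is stable under the operator $\sigma - 1$, and that for $U$ trivial free the map $U^{\Z/p} \hookrightarrow U \twoheadrightarrow U_{\Z/p}$ is zero. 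I would filter $S^*(U)$ by powers of $\mathrm{rad}$, note that $\sigma - 1$ lowers this filtration, and exploit that an invariant element $f$ — equivalently one killed by the "norm-trace" relation / by $\sigma - 1$ — must have its leading term with respect to this filtration satisfy a divisibility constraint. The cleanest route is probably to pick a basis of $U$ adapted to the Jordan blocks $V_i$ ($i \geq 2$), use the explicit action $\sigma x_j = x_j + x_{j-1}$, and show by an induction on the number of blocks and on degree that the only invariants surviving modulo $\mathrm{rad}$ are polynomials in the norms $N(x_i)$ (top generators of each block); an invariant whose image in $S^*(U_{\Z/p})$ is nonzero cannot involve $x_1$-type (socle) variables nontrivially in its reduction precisely because those map to zero, so it is built from the $N(v)$'s up to radical corrections.

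The main obstacle I anticipate is making the "$\subseteq$" inclusion genuinely rigorous without a full description of $S^*(U)^{\Z/p}$, which as the paper itself notes is unknown in general. The point is that I do not need to compute the invariants, only their images modulo $\mathrm{rad}$, so I would look for a degree-counting or filtration argument that bypasses the invariant theory: for instance, showing that $S^*(U)^{\Z/p} \to S^*(U_{\Z/p})$ factors through $S^*(U)^{\Z/p} \to (S^*(U)/(\sigma-1)S^*(U))$ and then that the image of the invariants in this quotient, further pushed to $S^*(U_{\Z/p})$, is detected entirely by norms. A transfer/averaging argument is unavailable since $p = 0$ in $\F_p$, so the substitute is the norm map $N$ together with the observation that $\mathrm{image}(\text{trace}) \subseteq \mathrm{rad}$, i.e. $\sum_{i} \sigma^i g$ maps to zero in $S^*(U_{\Z/p})$ when $g$ is not itself invariant — this is what pins the image down to the norm-generated part. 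I would also double-check the degenerate edge case where $U = 0$ (image is $\F_p = S^*(0)$, consistent) and confirm the statement is compatible with \probref{invariant problem} and the degree bookkeeping in $e(\FF_{\Z/p})$, which is how this feeds back into \thmref{monotone thm}.
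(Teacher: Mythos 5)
Your reduction to the trivial-free case and your proof of the easy containment via the norms $N(v)=\prod_j\sigma^j v$ (which map to $\bar v^{\,p}$ in $S^*(U_{\Z/p})$) are both correct and agree with the paper. The problem is the reverse containment, which you yourself flag as ``the real content'' and ``the main obstacle'': what you offer there is a strategy, not a proof. Two concrete issues. First, the opening inference is false as stated: an image generated by elements of degree divisible by $p$ need not lie in $S^*(U_{\Z/p}^{(1)})$ (for $p>2$, $\bar y_1^{\,p-1}\bar y_2$ has degree $p$ but is not a polynomial in $p$th powers of linear forms), so degree bookkeeping alone cannot close the argument. Second, the proposed induction ``on the number of blocks and on degree,'' showing that the only invariants surviving modulo the radical are polynomials in norms, is precisely the hard statement to be proved; already for $mV_2$ it is essentially the Campbell--Hughes theorem (Richman's conjecture), a substantial piece of invariant theory that the paper cites rather than reproves. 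You give no mechanism for the inductive step, and the observation that traces land in the radical does not suffice, since $S^*(U)^{\Z/p}$ is not generated by norms and traces for general $U$ --- as the paper itself notes, its generators are unknown in general.

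The structural idea you are missing, which lets the paper avoid any general description of the invariants, is a sandwich. For trivial-free $V$ with $V_{\Z/p}=mV_1$, the surjections $\widetilde V=mV_p\twoheadrightarrow V\twoheadrightarrow \bar V=V/rad^2(V)=mV_2$ induce isomorphisms on coinvariants and hence inclusions $I(\widetilde V)\subseteq I(V)\subseteq I(\bar V)$ of images in $S^*(V_{\Z/p})$. The upper bound $I(\bar V)=S^*(V_{\Z/p}^{(1)})$ is then a statement about $mV_2$ only, settled in the paper by Campbell--Hughes (if you want to stay self-contained, a Hasse--Taylor expansion does it: writing $\bar f$ for the image of an invariant $f$, one has $0=\sigma f-f\equiv\sum_i x_i\,\partial\bar f/\partial y_i \bmod (x_1,\dots,x_m)^2$, forcing every $\partial\bar f/\partial y_i=0$ and hence $\bar f\in\F_p[y_1^p,\dots,y_m^p]$), while the lower bound $I(\widetilde V)\supseteq S^*(V_{\Z/p}^{(1)})$ is exactly your norm computation, carried out in the paper via $I(V_p)=S^*(V_1^{(1)})$ from \propref{e(W) prop}. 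Without some such reduction to $mV_2$, an induction over arbitrary Jordan blocks $V_i$ with $i\geq 3$ is exactly the open-ended invariant-theoretic computation the theorem is designed to circumvent.
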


Here is a more invariant way of stating this.  Given $V$, let $W_{\Z/p}$ be the image of the composite $V^{\Z/p} \hra V \era V_{\Z/p}$.  Then the image of
$$ S^*(V)^{\Z/p} \hra S^*(V) \era S^*(V_{\Z/p})$$
will be
$$ S^*(W_{\Z/p} + V_{\Z/p}^{(1)}).$$

The next example both illustrates the theorem and will be used in its proof.

\begin{ex} Suppose $V = mV_2$, where the $i$th copy of $V_2$ has basis $\{x_i,y_i\}$ with $\sigma y_i = y_i + x_i$ and $\sigma x_i = x_i$. The kernel of the quotient $V \era V_{\Z/p}$ is the span of the $x_i$'s, so we can view $V_{\Z/p}$ as having basis given by the $y_i$'s.  The theorem in this case is asserting that the image of the composite
$$ \F_p[x_1, \dots, x_m, y_1,\dots,y_m]^{\Z/p} \hookrightarrow \F_p[x_1, \dots, x_m, y_1,\dots,y_m]\twoheadrightarrow \F_p[y_1,\dots,y_m]$$
is $\F_p[y_1^p,\dots,y_m^p]$.  The main theorem of \cite{campbell hughes} is a description of generators of $S^*(mV_2)^{\Z/p}$ as polynomials in the $x_i$'s and $y_j$'s; see also \cite{wehlau}. One sees that all of these are sent to 0 modulo the ideal $(x_1,\dots,x_m)$ except for the `norm' generators $\prod_{j=0}^{p-1} \sigma^j y_i = y_i^p - x_i^{p-1}y_i$, which map to $y_i^p$.  So the assertion of the theorem is true in this case.
\end{ex}

\begin{proof}[Proof of \thmref{inv thm}]  First we note that if $V = W \oplus U$ with $W$ trivial, then $S^*(V)^{\Z/p} = S^*(W) \otimes S^*(U)^{\Z/p}$ and $S^*(W \oplus U_{\Z/p}^{(1)}) = S^*(W) \otimes S^*(U_{\Z/p}^{(1)})$.

Thus it suffices to prove that, when $V$ is trivial free, there is an equality
$$  I(V) = S^*(V_{\Z/p}^{(1)}),$$
where $I(V) = \im\{S^*(V)^{\Z/p} \hra S^*(V) \era S^*(V_{\Z/p})\}$.

The previous example showed that this holds when $V = mV_2$.  We use this to show that the equality holds for a general trivial free $V$.  Recall that $V_{\Z/p} = V/rad(V)$. If we let $\bar V = V/rad^2(V)$, and let $\widetilde V$ be the projective cover of $V$, then $\bar V = mV_2$ and $\widetilde V = mV_p$, if $V_{\Z/p} = mV_1$.  The surjections
$\widetilde V \era V \era \bar V$
will induce isomorphisms
$ \widetilde V_{\Z/p} = V_{\Z/p} = \bar V_{\Z/p}$, and then inclusions
$$ I(\widetilde V) \subseteq I(V) \subseteq I(\bar V) = S^*(V_{\Z/p}^{(1)}).$$
Finally, to see that all of these inclusions are, in fact, equalities, we note $I(\widetilde V_p)$ is easily seen to contain $S^*(V_{\Z/p}^{(1)})$: our proof of \propref{e(W) prop} showed that $I(V_p) = S^*(V_1^{(1)})$, and so $I(mV_p)$ certainly contains $S^*(mV_1^{(1)})$.
\end{proof}

\subsection{The case when the filtration is non-trivial}

Now suppose that there exists a decomposition of {\em filtered} $\Z/p$--modules $V = W \oplus U$, with $W$ trivial and $U$ trivial free.  Define a filtration $\FF_{\Z/p}$ of $V_{\Z/p}$ by letting
$$V_{\Z/p}(k) = (W(k) + U(k-1) + rad(V))/rad(V).$$

\begin{prop} \label{straight inv prop} The image of $\HH(\FF)^{\Z/p} \ra S^*(V_{\Z/p})$ is contained in $\HH(\FF_{\Z/p})$.
\end{prop}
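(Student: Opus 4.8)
The plan is to reduce to the case where the filtration $\FF$ is $\Z/p$-split in the strong sense used to state the proposition — i.e.\ where we have an actual decomposition of filtered modules $V = W \oplus U$ with $W$ trivial and $U$ trivial free — and then to exploit the explicit description of the Hopf algebra $\HH(\FF) = S^*(V(0) + V(1)^{(1)} + \dots + V(n)^{(n)})$ together with \thmref{inv thm}. The key observation is that the graded pieces $V(0) + V(1)^{(1)} + \dots$ and their $p^k$-power spans are themselves $\Z/p$-submodules of $S^*(V)$, so $\HH(\FF)^{\Z/p}$ can be analyzed one "layer" $V(k)^{(k)}$ at a time. Since $U$ is trivial free, $U_{\Z/p}$ equals the image of $U$ in $V_{\Z/p}$ and $U^{\Z/p} \subseteq rad(U)$; the whole point of passing to coinvariants is that the composite $V^{\Z/p} \hra V \era V_{\Z/p}$ kills the socle, so trivial summands of $U$ contribute nothing.

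First I would reduce to $V$ trivial free: writing $V = W \oplus U$ as filtered $\Z/p$-modules with $W$ trivial, one has $\HH(\FF)^{\Z/p} = \HH(\FF|_W) \otimes \HH(\FF|_U)^{\Z/p}$, and the proposed $\FF_{\Z/p}$ splits as the evident filtration of $W_{\Z/p} = W$ tensored (in the Hopf-algebra sense) with the filtration $U_{\Z/p}(k) = (U(k-1) + rad(V))/rad(V)$ of $U_{\Z/p}$. So it suffices to show, for $U$ trivial free, that the image of $\HH(\FF|_U)^{\Z/p} \ra S^*(U_{\Z/p})$ lies in $S^*\!\big(\sum_k U_{\Z/p}(k-1)^{(k)}\big)$. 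Next I would filter $\HH(\FF|_U)^{\Z/p}$ by the subalgebras generated by the layers up through index $k$: for the $k$th layer, the relevant invariants are $(S^*(U(k)^{(k)}))^{\Z/p}$, and because raising to the $p^k$ power is $\Z/p$-equivariant (over $\F_p$ the $p^k$-power map commutes with $\sigma$), this is the image of $(S^*(U(k)))^{\Z/p}$ under the $p^k$-power map. Now apply \thmref{inv thm} to the trivial-free module $U(k)$: the image of $S^*(U(k))^{\Z/p}$ in $S^*(U(k)_{\Z/p})$ is $S^*(U(k)_{\Z/p}^{(1)})$, hence after the extra $p^k$-power the layer lands in $S^*(U(k)_{\Z/p}^{(k+1)})$, and $U(k)_{\Z/p}$ maps into $U_{\Z/p}(k+1)$ — wait, into $U_{\Z/p}(k+1)$ by the shift in the definition of $\FF_{\Z/p}$ — so the layer lands in $S^*(U_{\Z/p}(k)^{(k+1)}) \subseteq \HH(\FF_{\Z/p})$ after matching indices, as required. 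Finally, since $\HH(\FF|_U)^{\Z/p}$ is generated as an algebra by these layers, and each layer maps into $\HH(\FF_{\Z/p})$, the whole image does too; tensoring back with the $W$ part completes the argument.

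The main obstacle I anticipate is the index bookkeeping: the definition $V_{\Z/p}(k) = (W(k) + U(k-1) + rad(V))/rad(V)$ has the $U$-part shifted by one, and one must check that this shift is exactly what is forced by the two-step jump coming from (a) the $p^k$-power in $\HH(\FF)$ combined with (b) the extra $(1)$-power produced by \thmref{inv thm} when passing through the invariants. Getting this off-by-one correct — and verifying that $\HH(\FF|_U)^{\Z/p}$ is genuinely generated by the layer-by-layer pieces rather than by more subtle "mixed" invariants spanning several layers at once — is the delicate point. For the generation claim, I would argue that $\HH(\FF) = S^*(\bigoplus_k V(k)^{(k)}/V(k-1)^{(k)})$ as a polynomial algebra on a $\Z/p$-stable graded vector space, so its invariants are built from invariants of the individual isotypic layers; since the $\sigma$-action preserves each summand $V(k)^{(k)}$ (the power operations being equivariant) the reduction to a single layer is legitimate, and then \thmref{inv thm} does the rest.
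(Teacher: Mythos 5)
Your opening reduction (splitting off the trivial summand $W$) and the observation that the $p^k$-power map is $\F_p$-linear and $\Z/p$-equivariant are both fine, and match the paper's first step. But the argument then breaks at exactly the point you flag as delicate, and the justification you offer for it is false. The invariants of a polynomial algebra on a direct sum of $\Z/p$-modules are \emph{not} generated by the invariants of the individual summands: over $\F_p$ with $p$ dividing the group order, $(A\otimes B)^{\Z/p}$ is strictly larger than the subalgebra generated by $A^{\Z/p}$ and $B^{\Z/p}$. This is precisely the phenomenon of vector invariants that the paper's citation of Campbell--Hughes is about; already for $2V_2$ with bases $\{x_1,y_1\}$, $\{x_2,y_2\}$, the invariant $x_1y_2-x_2y_1$ is not a polynomial in invariants of the two copies separately. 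So your layer-by-layer analysis of $\HH(\FF)^{\Z/p}$ misses all the mixed invariants spanning several layers, and these are exactly the elements whose images in $S^*(V_{\Z/p})$ you have not bounded. A secondary problem: the identification $\HH(\FF)=S^*\bigl(\bigoplus_k V(k)^{(k)}/V(k-1)^{(k)}\bigr)$ requires choosing $\Z/p$-equivariant complements to $V(k-1)$ in $V(k)$, and short exact sequences of $\F_p[\Z/p]$-modules need not split, so even the equivariant polynomial-generator decomposition you want is not available in general.

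The paper's proof sidesteps both issues. After the same reduction to $V$ trivial free, it reduces further to $rad^2(V)=0$, i.e.\ $V\cong mV_2$, by passing to $\bar V = V/rad^2(V)$: the image in $S^*(V_{\Z/p})$ can only grow under this replacement, and the target filtration $\FF_{\Z/p}$ is unchanged. In the $mV_2$ case one chooses $y_1,\dots,y_m$ of filtrations $k_1,\dots,k_m$ projecting to a filtered basis of $V_{\Z/p}$, sets $x_j=\sigma y_j-y_j$, and \emph{enlarges} $\HH(\FF)$ to the $\Z/p$-stable algebra $\F_p[x_1,\dots,x_m,y_1^{p^{k_1}},\dots,y_m^{p^{k_m}}]$; since only an upper bound on the image is needed, enlarging is harmless, and the image of the full ring of invariants of this bigger algebra in $\F_p[y_1,\dots,y_m]$ is computed (via \thmref{inv thm} applied after a Frobenius rescaling of each coordinate) to be $\F_p[y_1^{p^{k_1+1}},\dots,y_m^{p^{k_m+1}}]=\HH(\FF_{\Z/p})$. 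That enlargement step, which packages all the mixed invariants at once rather than layer by layer, is the idea your proposal is missing.
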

\begin{proof}  Just as in the proof of \thmref{inv thm}, it suffices to prove this when $V$ is trivial free, and then $\FF_{\Z/p}$ is defined by the simpler formula
$$V_{\Z/p}(k) = (V(k-1) + rad(V))/rad(V).$$

Also, similar to the proof of \thmref{inv thm}, we let $\bar V = V/rad^2(V)$, with filtration $\bar \FF$ defined by $\bar V(k) = (V(k) + rad^2(V))/rad^2(V)$.  Then
$$ \im \{\HH(\FF)^{\Z/p} \ra S^*(V_{\Z/p}) \} \subseteq \im \{\HH(\bar \FF)^{\Z/p} \ra S^*(V_{\Z/p})\},$$ and the filtrations $\FF_{\Z/p}$ and  $\bar \FF_{\Z/p}$ of $V_{\Z/p}$ agree.
Thus it suffices to also assume that $V$ satisfies $rad^2(V) = 0$, so that $V$ is isomorphic to $mV_2$ for some $m$.

In this case, let elements $y_1, \dots, y_m \in V$, of filtration $k_1, \dots, k_m$, project to a filtered basis of $V_{\Z/p}$, and let $x_j = \sigma y_j - y_j$.  Then
$$ \HH(\FF) \subseteq \F_p[x_1, \dots, x_m, y_1^{p^{k_1}}, \dots, y_m^{p^{k_m}}]$$
as algebras with $\Z/p$ action, and so the image of $\HH(\FF)^{\Z/p} \ra S^*(V_{\Z/p})$ is contained in $\HH(\FF_{\Z/p}) = \F_p[y_1^{p^{k_1}+1}, \dots, y_m^{p^{k_m}+1}]$, as this is the image of
$$ \F_p[x_1, \dots, x_m, y_1^{p^{k_1}}, \dots, y_m^{p^{k_m}}]^{\Z/p} \ra \F_p[y_1,\dots,y_m].$$
\end{proof}

\subsection{The general case}

Unfortunately, at least when $p\geq 3$, a general filtered $\Z/p$--module $V$ need not admit a direct sum decomposition as {\em filtered} modules of the form $V = W \oplus U$, with $W$ trivial and $U$ trivial free.

\begin{ex}  \label{funny example} With $p\geq 3$, let $V(0) = V_2$ embedding `diagonally' in $V_1 \oplus V_3 = V(1) = V$.  Then the image of $soc(V) \ra V/rad(V)$ is $V_1$, generated by an element of $V(0)$, but not of $soc(V(0))$, and we see that there is no isomorphism $V \simeq V_1 \oplus V_3$ as filtered modules\footnote{We thank Dave Benson for showing us this example.}.
\end{ex}

This phenomenon goes away if we assume that $rad(V) \subseteq V(0)$.

\begin{lem} \label{decomp lemma} If $rad(V) \subseteq V(0)$, then there exists a decomposition of filtered $\Z/p$--modules $V = W \oplus U$, with $W$ trivial and $U$ trivial free.
\end{lem}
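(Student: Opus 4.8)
The plan is to produce the splitting in two moves: first split off the trivial summands at the top ($V_1$'s), then handle everything else. Since $\Z/p$ has finite representation type with indecomposables $V_1,\dots,V_p$, write $V \simeq m_1 V_1 \oplus U$ where $U$ is trivial free (i.e.\ $m_1$ copies of $V_1$ are peeled off). The content of the lemma is that when $\mathrm{rad}(V) \subseteq V(0)$, one can choose such a decomposition compatibly with the filtration $\FF$. First I would observe that the hypothesis $\mathrm{rad}(V) \subseteq V(0)$ forces the filtration to be ``almost constant'': for every $k \geq 0$ we have $\mathrm{rad}(V) \subseteq V(k)$, so each $V(k)$ is a \emph{submodule} of $V$ (not merely a subspace), because $\sigma v - v \in \mathrm{rad}(V) \subseteq V(0) \subseteq V(k)$ for all $v \in V$. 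This is the key simplification: the whole filtration consists of honest $\Z/p$-submodules, and it lives ``between'' $\mathrm{rad}(V)$ and $V$.

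Next I would split off the trivial part. The composite $\mathrm{soc}(V) \hookrightarrow V \twoheadrightarrow V/\mathrm{rad}(V) = V_{\Z/p}$ has image $W_{\Z/p}$; choose a section, giving a trivial submodule $W_0 \subseteq \mathrm{soc}(V)$ mapping isomorphically onto $W_{\Z/p}$, and a trivial-free complement $U_0$ with $V = W_0 \oplus U_0$ as $\Z/p$-modules. I now need to arrange that $W_0$ and $U_0$ are \emph{filtered} submodules, i.e.\ $V(k) = (V(k)\cap W_0) \oplus (V(k)\cap U_0)$ for all $k$. Here the hypothesis does the work again: since $\mathrm{rad}(V) \subseteq V(0) \subseteq V(k)$ and $U_0 \supseteq \mathrm{rad}(V)$ (as $U_0$ is trivial free, its socle lies in its radical, but more to the point $\mathrm{rad}(V) = \mathrm{rad}(U_0) \subseteq U_0$ since $W_0$ is trivial), we get $V(k) = (V(k)\cap W_0) \oplus \mathrm{rad}(V) \oplus (\text{something})$; more carefully, $V(k)/\mathrm{rad}(V)$ is a subspace of $V/\mathrm{rad}(V) = W_{\Z/p} \oplus (U_0/\mathrm{rad}(V))$, and I should choose the section $W_0 \to \mathrm{soc}(V)$ so that its image, intersected with each $V(k)$, is exactly the preimage of $V(k)/\mathrm{rad}(V) \cap W_{\Z/p}$. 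Since $W_0$ is canonically identified with $W_{\Z/p}$ via a map that is injective on $\mathrm{soc}(V)$, and since distinct $V(k)$ pull back to a flag in $W_{\Z/p}$, this is just a matter of choosing the section to respect that flag — which is possible because any flag of subspaces of a vector space can be respected by a splitting.

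The main obstacle, and the place where Example \ref{funny example} shows the hypothesis is genuinely needed, is exactly this last compatibility: without $\mathrm{rad}(V) \subseteq V(0)$ the subspaces $V(k)$ need not be submodules, so there is no reason the socle section can be chosen filtered — indeed in the example the relevant trivial quotient is generated by an element of $V(0)$ that is not in $\mathrm{soc}(V(0))$, obstructing any filtered splitting. So the proof should lead with the remark that the hypothesis makes every $V(k)$ a submodule, then reduce the problem to choosing a socle section respecting the induced flag on $W_{\Z/p} = \mathrm{image}(\mathrm{soc}(V) \to V_{\Z/p})$, and finally check that the complement $U_0$ one gets is automatically trivial free and filtered (its filtration pieces being $V(k) \cap U_0$, which contain $\mathrm{rad}(V)$ for all $k$). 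I expect the verification that $U_0$ is trivial free to be immediate from $\mathrm{soc}(U_0) = \mathrm{soc}(V) \cap U_0 \subseteq \mathrm{rad}(V) = \mathrm{rad}(U_0)$, and the filtered-direct-sum check to be a short linear-algebra argument once the section is chosen correctly.
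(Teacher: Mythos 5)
Your overall strategy is the same as the paper's: lift $W_{\Z/p}=\im\{soc(V)\ra V_{\Z/p}\}$ back into $soc(V)$ compatibly with the filtration, and take a trivial free complement. You have also correctly located where the hypothesis $rad(V)\subseteq V(0)$ enters: it guarantees that the map $soc(V)\cap V(k)\ra W_{\Z/p}\cap F_kV_{\Z/p}$ is surjective for every $k$ (if $y\in soc(V)$ and $x\in V(k)$ have the same image, then $y=x+r$ with $r\in rad(V)\subseteq V(k)$, so $y\in soc(V(k))$), which is exactly what makes a flag-respecting section possible and exactly what fails in \exref{funny example}. So the construction of $W$ in your plan is sound and matches the paper.

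There is, however, one genuine gap: the claim that the complement $U_0$ is ``automatically'' filtered once the section defining $W_0$ is chosen well. It is not. Writing $F_kV_{\Z/p}=V(k)/rad(V)$, one checks that $V(k)=(V(k)\cap W_0)\oplus(V(k)\cap U_0)$ holds for all $k$ if and only if $\bar U_0=U_0/rad(V)$ is a \emph{filtered} complement of $W_{\Z/p}$ in $V_{\Z/p}$, i.e.\ $F_kV_{\Z/p}=(W_{\Z/p}\cap F_kV_{\Z/p})\oplus(\bar U_0\cap F_kV_{\Z/p})$; an arbitrary module complement need not satisfy this. Concretely, take $V=V_1\oplus V_2$ with $V_1=\langle a\rangle$, $V_2=\langle x,y\rangle$, $\sigma y=y+x$, and set $V(0)=\langle a+y,\,x\rangle$, $V(1)=V$; the hypothesis holds, $W_0=\langle a\rangle$ is a perfectly good (indeed forced, up to adding multiples of $x$) filtered section, but the obvious complement $U_0=\langle x,y\rangle$ gives $(V(0)\cap W_0)\oplus(V(0)\cap U_0)=\langle x\rangle\subsetneq V(0)$, whereas $U_0=\langle x,\,y+a\rangle$ works. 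The repair is what the paper does: first choose a filtered complement $U_{\Z/p}$ of $W_{\Z/p}$ inside the filtered vector space $V_{\Z/p}$ (pure linear algebra), then let $U$ be the submodule generated by a filtered lift of $U_{\Z/p}$. With that one correction your argument goes through.
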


We temporarily postpone the proof. \\

Now let $\FF$ be an arbitrary filtration of a $\Z/p$--module $V$.  Define a filtration $\FF_{\Z/p}$ of $V_{\Z/p}$ by letting
$$ V_{\Z/p}(k) = (soc(V(k) + rad(V)) + V(k-1) + rad(V))/rad(V).$$
Note that, if $V = W \oplus U$ with $W$ trivial and $U$ reduced, then the filtration $\FF_{\Z/p}$ agrees with the filtration of the same name in the last subsection.

The next theorem says that this filtration solves \probref{invariant problem}.

\begin{thm}  \label{filtered inv thm} {\bf (a)} The image of $\HH(\FF)^{\Z/p} \ra S^*(V_{\Z/p})$ is contained in $\HH(\FF_{\Z/p})$. \\
\noindent{\bf (b)} $e(\FF) \leq e(\FF_{\Z/p})$.
\end{thm}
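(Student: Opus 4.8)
The strategy is to reduce the general case to the already-handled situation where $rad(V) \subseteq V(0)$, exactly as Lemma~\ref{decomp lemma} and Proposition~\ref{straight inv prop} are set up to treat. First I would fix the filtration $\FF$ of $V$ and introduce the auxiliary filtration $\FF'$ obtained by \emph{padding the bottom}: set $V'(k) = V(k) + rad(V)$ for all $k$, so that $rad(V) \subseteq V'(0)$. One checks immediately that $\HH(\FF) \subseteq \HH(\FF')$ (adding elements to the filtered pieces only enlarges the associated Hopf algebra), hence the image of $\HH(\FF)^{\Z/p}$ in $S^*(V_{\Z/p})$ is contained in the image of $\HH(\FF')^{\Z/p}$. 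Now $\FF'$ satisfies the hypothesis of Lemma~\ref{decomp lemma}, so there is a filtered decomposition $V = W \oplus U$ with $W$ trivial and $U$ trivial free; applying Proposition~\ref{straight inv prop} to $\FF'$ gives that this image lies in $\HH(\FF'_{\Z/p})$, where $V'_{\Z/p}(k) = (W'(k) + U'(k-1) + rad(V))/rad(V)$. The remaining work for part (a) is the bookkeeping identity $\HH(\FF'_{\Z/p}) = \HH(\FF_{\Z/p})$ — that is, checking that the "padded then reduced" filtration coincides with the directly defined filtration $V_{\Z/p}(k) = (soc(V(k)+rad(V)) + V(k-1) + rad(V))/rad(V)$. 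This is a direct computation: $soc$ of the trivial-free summand contributes nothing modulo $rad(V)$, and $soc(W'(k)) = W'(k)$ since $W$ is trivial, so the socle term in the definition of $\FF_{\Z/p}$ is precisely what accounts for the $W$-part after passing to coinvariants, while the $V(k-1)$ term accounts for the $U$-part (one filtration level down, matching the $(1)$-twist in Theorem~\ref{inv thm}).

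\textbf{Part (b): the dimension count.} For the inequality $e(\FF) \le e(\FF_{\Z/p})$, recall $e(\FF) = \sum_k c_k(\FF)(2p^k-1)$ where $c_k(\FF) = \dim V(k)/V(k-1)$, and similarly for $\FF_{\Z/p}$. The plan is to compare the graded pieces level by level, again using the decomposition $V = W\oplus U$ from Lemma~\ref{decomp lemma} applied to the padded filtration $\FF'$ (note $e(\FF) = e(\FF')$ is false in general — padding changes $e$ — so one must be slightly careful and instead argue with the original $\FF$, splitting each $c_k(\FF)$ into a $W$-contribution and a $U$-contribution). The key point is that a basis element of $V(k)/V(k-1)$ coming from the trivial part survives to filtration level $\le k$ in $V_{\Z/p}$ (via the socle term), contributing $2p^k - 1$, while a basis element from the trivial-free part $U$ survives to filtration level $\le k+1$ (via the $V(k-1)\hookrightarrow V_{\Z/p}(k+1)$ shift — this is the $p$th-power twist), contributing $2p^{k+1} - 1 \ge 2p^k - 1$. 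Since the composite $V^{\Z/p}\hookrightarrow V \twoheadrightarrow V_{\Z/p}$ restricted to the trivial-free part is zero while on the trivial part it is injective, one sees each $c_k$ contribution is accounted for in $\FF_{\Z/p}$ at a filtration level no smaller than $k$, and since $2p^k-1$ is increasing in $k$, summing gives $e(\FF)\le e(\FF_{\Z/p})$.

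\textbf{Main obstacle.} The conceptual content is all in Lemma~\ref{decomp lemma}, Proposition~\ref{straight inv prop}, and Theorem~\ref{inv thm}, which we are entitled to assume; so the obstacle here is not deep but bookkeeping-heavy: one must verify that the two a priori different definitions of $\FF_{\Z/p}$ — the one via the padded, split filtration and the intrinsic one with the $soc(V(k)+rad(V))$ term — genuinely agree, and that the reduction from $\FF$ to $\FF'$ does not lose anything in the dimension count of part (b). The subtle point, illustrated by Example~\ref{funny example}, is precisely why one cannot split $V$ compatibly with an arbitrary filtration and must pass through $\FF'$; the role of the socle term in the definition of $\FF_{\Z/p}$ is to record the "diagonal" trivial subquotients that only become visible after padding. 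I would organize the writeup so that the padding reduction is stated first as a short lemma, then part (a) follows in two lines, and part (b) is the level-by-level count using the increasing function $k \mapsto 2p^k-1$.
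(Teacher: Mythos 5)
Part (a) of your proposal is essentially the paper's argument: pad to $\FF'$ with $V'(k)=V(k)+rad(V)$, observe $\HH(\FF)\subseteq\HH(\FF')$, apply Lemma~\ref{decomp lemma} and Proposition~\ref{straight inv prop} to $\FF'$, and check $\FF'_{\Z/p}=\FF_{\Z/p}$ (which is immediate from the intrinsic formula, since $V'(k)+rad(V)=V(k)+rad(V)$). That part is fine.

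Part (b) has a genuine gap. You propose to ``split each $c_k(\FF)$ into a $W$-contribution and a $U$-contribution'' using the decomposition $V=W\oplus U$ from Lemma~\ref{decomp lemma}. But that decomposition is filtered only with respect to the padded filtration $\FF'$; for the original $\FF$ no such compatible splitting exists in general --- that is exactly the content of Example~\ref{funny example}, which you cite for a different purpose. So ``a basis element of $V(k)/V(k-1)$ coming from the trivial part'' is not well-defined relative to $\FF$, and the level-by-level bookkeeping you describe cannot get started. A second problem is that the comparison is not one element to one element: a basis element of the trivial-free part does not ``survive'' to $V_{\Z/p}$ at all in general; rather, an entire cyclic submodule of dimension $d\leq p$ collapses to a single element of $V_{\Z/p}$ sitting one filtration level higher, and the inequality one actually needs is $d(2p^k-1)\leq 2p^{k+1}-1$ for $d\leq p$. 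Your phrase ``each $c_k$ contribution is accounted for \dots at a filtration level no smaller than $k$'' elides this many-to-one collapse (and contradicts your earlier ``survives to filtration level $\leq k$''). A further wrinkle is that the basis elements of a single cyclic submodule may lie in different $\FF$-levels. The paper's proof avoids all of this by not using the $W\oplus U$ splitting in part (b): it chooses a filtered basis of $V$ containing lifts $y_\beta,z_\gamma$ of a basis of $V_{\Z/p}$ (with the $\bar z_\gamma$ spanning the image of the socle, so $|\bar z_\gamma|\geq|z_\gamma|$ and $|\bar y_\beta|=|y_\beta|+1$), coarsens $\FF$ to an auxiliary filtration $\FF''$ with $V''(k)\subseteq V(k)$ by placing the whole submodule $V_\beta$ generated by $y_\beta$ in level $|y_\beta|$, and then performs the block-by-block estimate $e(\FF)\leq e(\FF'')\leq e(\FF_{\Z/p})$. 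You would need to supply some such device to make your count rigorous.
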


\begin{proof}[Proof of \thmref{filtered inv thm}(a)]  Define a new $\Z/p$--equivariant filtration $\FF^{\prime}$ of $V$ by letting
$$ V^{\prime}(k) = V(k) + rad(V),$$
so that, for all $k$,
$$ V(k) \subseteq V^{\prime}(k).$$
Then $\HH(\FF) \subseteq \HH(\FF^{\prime})$, so that
$$ \im\{\HH(\FF)^{\Z/p} \ra S^*(V_{\Z/p}) \} \subseteq \im \{\HH(\FF^{\prime})^{\Z/p} \ra S^*(V_{\Z/p}) \}.$$
Moreover, $\FF_{\Z/p}^{\prime} = \FF_{\Z/p}$.

By construction, $rad(V) \subseteq V^{\prime}(0)$, and so \lemref{decomp lemma} applies.  Thus part (a) of the theorem follows from \propref{straight inv prop} which tells us that
$$\im \{\HH(\FF^{\prime})^{\Z/p} \ra S^*(V_{\Z/p}) \} \subseteq \HH(\FF_{\Z/p}).$$
\end{proof}
\begin{proof}[Proof of \thmref{filtered inv thm}(b)]  For $v \in V$, let $\bar v$ denote its image in $V_{\Z/p}$.  We define $|v| = k$ if $v \in V(k)-V(k-1)$, and similarly define $|\bar v|$.  We say that a basis $\{v_{\alpha}\}$ for $V$ is a filtered basis if, for all $k$, $\{ v_{\alpha} \ | \ |v_{\alpha}| \leq k\}$ is a basis for $V(k)$.

One can choose a filtered basis for $V$ which include families of elements $y_{\beta}, z_{\gamma}$ such that the $\bar z_{\gamma}$ form a basis for $(soc(V) + rad(V))/rad(V)$, and the $\bar y_{\beta}, \bar z_{\gamma}$ form a basis for $V_{\Z/p} = V/rad(V)$.  Then $|\bar y_{\beta}| = |y_{\beta}|+1$, while $|\bar z_{\beta}| \geq |z_{\beta}|$, with the possibility of $>$ due to the phenomenon illustrated in \exref{funny example} (reprised below as \exref{funny example reprised}).

Each $y_{\beta}$ will generate a $\Z/p$--submodule $V_{\beta} \subset V(|y_{\beta}|)$ of dimension at most $p$,  and such modules, together with the $z_{\gamma}$, span $V$.

Define a new filtration $\FF^{\prime \prime}$ of $V$ by letting $V^{\prime \prime}(k)$ be the linear span of all $V_{\beta}$ and $z_{\gamma}$ such that $|y_{\beta}| \leq k$ and $|z_{\gamma}| \leq k$. (This might not be a filtration by sub-$\Z/p$--modules.)  Then, for all $k$,
$$ V^{\prime \prime}(k) \subseteq V(k).$$
It follows that
\begin{equation*}
\begin{split}
e(\FF) \leq e(\FF^{\prime \prime}) &
\leq \sum_{\beta} 2p^{|y_{\beta}|}\dim V_{\beta} + \sum_{\gamma} 2p^{|z_{\gamma}|} - r(V)  \\
  &\leq \sum_{\beta} 2p^{|y_{\beta}|+1} + \sum_{\gamma} 2p^{|z_{\gamma}|} - r(V) \\
    &\leq \sum_{\beta} 2p^{|\bar y_{\beta}|} + \sum_{\gamma} 2p^{|\bar z_{\gamma}|} - r(V_{\Z/p}) \\
    & = e(\FF_{\Z/p}).
\end{split}
\end{equation*}
\end{proof}

\begin{proof}[Proof of \lemref{decomp lemma}]  Filter $V_{\Z/p}$ by letting $F_kV_{\Z/p} = (V(k) + rad(V))/rad(V)$.  Then let $W_{\Z/p} = (soc(V) + rad(V))/rad(V) \subset V_{\Z/p}$ be filtered by letting $F_kW_{\Z/p} = W_{\Z/p} \cap F_kV_{\Z/p}$.  It is easy to choose a filtered complement $U_{\Z/p}$ so that $F_kV_{\Z/p} = F_kW_{\Z/p} \oplus F_kU_{\Z/p}$ as filtered $\Z/p$--vector spaces.

The point is now that, as $rad(V) \subseteq V(0)$, one can choose a lifting
\begin{equation*}
\xymatrix{
& V \ar[d]  \\
W_{\Z/p} \ar[r] \ar@{.>}[ur] & V_{\Z/p} }
\end{equation*}
as filtered vector spaces so that the image is contained in $soc(V)$, and thus can be viewed as a lifting of filtered $\Z/p$--modules.  For if $x + rad(V) = y + rad(V)$ with $x \in V(k)$ and $y \in soc(V)$, then $y \in V(k) \cap soc(V) = soc(V(k))$.  The conclusion of the lemma follows if we let $W$ be the image of such a lifting, and $U$ equal to the filtered $\Z/p$--module generated by any lifting
\begin{equation*}
\xymatrix{
& V \ar[d]  \\
U_{\Z/p} \ar[r] \ar@{.>}[ur] & V_{\Z/p}. }
\end{equation*}
\end{proof}

\begin{ex} \label{funny example reprised} We illustrate how \thmref{filtered inv thm}, and its proof, work when our filtered module $V$ is as in \exref{funny example}.  Thus let $p \geq 3$, and let $\FF$ be the filtration given by having $V(0) = V_2$ diagonally embedded in $V = V(1)=  V_1 \oplus V_3$.  Then $\FF_{\Z/p}$ is the filtration having $V_{\Z/p}(0) = V_1$ embedded as the first factor of $V_{\Z/p} = V_{\Z/p}(1) = V_1 \oplus V_1$.

Corresponding to the elements chosen in the proof of part (b) of the theorem, $V$ has a basis $z,y,x_2,x_1$ satisfying the following.
\begin{itemize}
\item $y$ is a $\Z/p$--module generator of $V_3$.
\item $x_2 = \sigma y - y$, $x_1 = \sigma x_2 - x_2$, and these span $rad(V)$.
\item $V(0) = \langle z, x_1\rangle$, and $\sigma z - z = x_1$.
\item The direct summand $V_1$ is spanned by $z-x_2$.
\item $0 = V_{\Z/p}(0)  \subset V_{\Z/p}(1) = \langle \bar z\rangle \subset \langle \bar z, \bar y\rangle = V_{\Z/p}(2) = V_{\Z/p}$.
\end{itemize}

Part (a) of the theorem then says that the image of
$ \F_p[z,x_1,y^p,x_2^p]^{\Z/p}$ in $\F_p[\bar z, \bar y]$ will be contained in $\F_p[\bar z^p, \bar y^{p^2}]$, and part (b) correctly predicts that
$$ e(\FF) = 4p \leq 2p^2 + 2p-2 = e(\FF_{\Z/p}).$$

The auxiliary filtrations $\FF^{\prime}$ and $\FF^{\prime \prime}$ of $V$ used in the theorem's proof satisfy
$$ \langle z \rangle = V^{\prime \prime}(0) \subset V(0) \subset V^{\prime}(0) = \langle z, x_2, x_1 \rangle.$$
\end{ex}

\end{document}